\newcommand{\Z}{\mathbb{Z}}
\newcommand{\Q}{\mathbb{Q}}
\newcommand{\C}{\mathbb{C}}
\newcommand{\R}{\mathbb{R}}
\newcommand{\trivial}{\bf{1}}
\newcommand{\OO}{\mathcal{O}}
\newcommand{\GG}{\mathcal{G}}
\newcommand{\RG}{{\rm R}\Gamma}
\newcommand{\ff}{\mathfrak{f}}
\newcommand{\cK}{\mathcal{K}}
\newcommand{\cL}{\mathcal{L}}
\newcommand{\ol}[1]{\overline{#1}}
\newcommand{\parenth}[1]{\left( #1 \right)}
\newcommand{\otimesL}{\otimes^{\mathbb{L}}}
\DeclareMathOperator{\Gal}{Gal}
\DeclareMathOperator{\Imag}{Im}
\DeclareMathOperator{\Fitt}{Fitt}
\DeclareMathOperator{\ram}{ram}
\DeclareMathOperator{\ord}{ord}
\DeclareMathOperator{\Hom}{Hom}
\DeclareMathOperator{\RHom}{RHom}
\DeclareMathOperator{\rank}{rank}
\DeclareMathOperator{\ab}{ab}
\DeclareMathOperator{\Det}{Det}
\DeclareMathOperator{\cyc}{cyc}
\DeclareMathOperator{\perf}{perf}
\DeclareMathOperator{\loc}{loc}
\DeclareMathOperator{\Real}{Re}
\DeclareMathOperator{\Twist}{Tw}
\DeclareMathOperator{\twist}{tw}
\DeclareMathOperator{\Cone}{Cone}
\DeclareMathOperator{\Iw}{Iw}
\DeclareMathOperator{\id}{id}
\DeclareMathOperator{\res}{res}
\DeclareMathOperator{\reg}{reg}
\DeclareMathOperator{\Aut}{Aut}
\DeclareMathOperator{\cores}{cores}
\DeclareMathOperator{\inv}{inv}
\DeclareMathOperator{\Spec}{Spec}
\DeclareMathOperator{\rk}{rk}
\DeclareMathOperator{\ev}{ev}
\DeclareMathOperator{\D}{D}
\let\oldenumerate\enumerate
\renewcommand{\enumerate}{
   \oldenumerate
   \setlength{\itemsep}{1pt}
   \setlength{\parskip}{0pt}
   \setlength{\parsep}{0pt}
}
\let\olditemize\itemize
\renewcommand{\itemize}{
   \olditemize
   \setlength{\itemsep}{1pt}
   \setlength{\parskip}{0pt}
   \setlength{\parsep}{0pt}
}
\theoremstyle{plain}
\newtheorem{thm}{Theorem}[section]
\newtheorem{lem}[thm]{Lemma}
\newtheorem{prop}[thm]{Proposition}
\newtheorem{cor}[thm]{Corollary}
\theoremstyle{definition}
\newtheorem{defn}[thm]{Definition}
\newtheorem{rem}[thm]{Remark}
\title[On zeta elements and functional equations]
{On zeta elements and functional equations for Tate motives over totally real fields}
\author{Mahiro Atsuta}
\address{Department of Mathematics, Faculty of Science and Technology, Tokyo University of Science. 
2641 Yamazaki, Noda City, Chiba 278-8510, Japan}
\email{mahiro\_atsuta@rs.tus.ac.jp}
\keywords{Iwasawa theory, $L$-function, zeta element}
\subjclass[2020]{11R29}
\date{\today}
\begin{document}
\begin{abstract}
In this paper, we study Iwasawa theory for Tate motives over totally real fields. 
More precisely, we construct a zeta element that interpolates the values of $L$-functions at positive integers 
over totally real fields under a certain unramified condition at $p$. 
As an application of this, 
we construct a canonical element in the exterior power bidual of the Galois cohomology group 
that is also related to the values of $L$-functions at positive integers. 
%a higher rank Euler system over a totally real field that is related to the values of $L$-functions at positive integers.
\end{abstract}
\maketitle

%\tableofcontents

%%%%%%%%%%%%%%%%%%%%%
\section{Introduction}\label{Intro}
%%%%%%%%%%%%%%%%%%%%%
One of the central themes in number theory is the study of relationships between arithmetic objects and the special values of $L$-functions. 
The equivariant Tamagawa number conjecture (ETNC), formulated by Burns--Flach \cite{BuFl01}, is a very general and powerful conjecture describing such relationships. 
More concretely, the ETNC predicts that the determinant module of an associated global arithmetic complex admits a canonical basis, called a ``zeta element'', which is related to the special values of $L$-functions. 

On the other hand, the generalized Iwasawa main conjecture (GIMC), formulated by Kato \cite{Kato91} and Fukaya--Kato \cite{FK06}, predicts a statement similar to that of the ETNC.
The GIMC is stronger than the ETNC in that it also incorporates the compatibility of zeta elements under deformations of motives.

In this paper, we focus on Tate motives over a totally real field and study these conjectures in this setting. 
Let $p$ be an odd prime and $j$ be a positive integer. 
The first main result of this paper is to construct a zeta element for $\Z_p(j)$, which interpolates the values of $L$-functions at $s = j$ over totally real fields under a certain unramified condition at $p$ (Theorem \ref{thm:zeta>0}(1) below). 
Moreover, we prove that these zeta elements are compatible with deformations of the cyclotomic twist (Theorem \ref{thm:zeta>0}(2)). 
These results provide a partial verification of the generalized Iwasawa main conjecture in this case of Tate motives.

As an application of this result, we construct a canonical element in the exterior power bidual of the Galois cohomology group of $\Z_p(1-j)$ for each positive integer $j$. 
The image of this element under the Bloch--Kato dual exponential map coincides with the value of the corresponding $L$-function at $s=j$ (Theorem \ref{thm:ES}(ii) below).
The existence of such elements was predicted by Burns--Kurihara--Sano \cite{BKS20}, who call them a ``generalized Stark element". 
Furthermore, the resulting collection of these elements forms a (higher rank) Euler system (see Remark \ref{rem:ES} below).
We explain this in more detail later. 

This paper is inspired by recent work of Burns--Sano \cite{BS20functional}.
They established the local (non-equivariant) Tamagawa number conjecture for $\Z_p (j)$ when $p$ is unramified in the base field \cite[Theorem 2.3]{BS20functional}. 
Moreover, by combining this with the Iwasawa main conjecture, they constructed a higher rank Euler system over a totally real field \cite[Theorem 5.2]{BS20functional}. 
A basic strategy for proving the main results in this paper follows a similar approach to this work. 

Finally, it is worth mentioning that Sakamoto \cite{Sak23} independently constructed higher rank Euler systems, distinct from the work of Burns--Sano.
Although the construction in \cite{Sak23} also relies on the Iwasawa main conjecture, it is notable for its use of a non-canonical ``rank-reduction'' technique for Euler systems.

%%%%%%%%%%%%%%%%%%%%%%%%%%%%%%%%%%%%%
\subsection{Main results}\label{ss:main1}
%%%%%%%%%%%%%%%%%%%%%%%%%%%%%%%%%%%%%%%
We state the main results of this paper here. 

Let $K/k$ be a finite abelian CM-extension; i.e., 
$K/k$ is a finite abelian extension of number fields such that 
$k$ is a totally real field and $K$ is a CM-field. 
We fix an odd prime number $p$ and consider a cyclotomic extension $K_\infty := \cup_{n\geq0} K (\mu_{p^n})$, 
where $\mu_{p^n}$ is the set of $p^n$-th roots of unity. 
We set $\Lambda := \Z_p[[\Gal (K_\infty / k)]]$ and 
\[
\Omega (K_\infty / k) := \{ k \subset L \subset K_\infty \; | \; [L : k] < \infty \text{ and } L \text{ is a CM-field} \}. 
\]
For any integer $j$, consider an idempotent 
\[
\varepsilon_j := \frac{1+(-1)^j c}{2} \in \Lambda, 
\]
where $c \in \Gal (K_\infty / k)$ is the complex conjugation. 
For any $L \in \Omega (K_\infty / k)$, we put $\GG_L := \Gal (L/k)$ and 
we also write $\varepsilon_j \in \Z_p[\GG_L]$ for the idempotent which is defined similarly. 
We also put 
$X_L (j) := \bigoplus_{\iota : L \hookrightarrow \C} \Z_p (j),  
$
where $\iota$ runs over the set of embeddings, and we set $X_L (j)^+ := \varepsilon_j X_L (j)$. 
%Then, the complex conjugation $c_\R \in \Gal (\C / \R)$ acts on this 
%by $c_\R \cdot (x_\iota)_{\iota : M \hookrightarrow \C} = ((-1)x_\iota)_{c_\R \circ}$. 

Let $S$ be a finite set of places of $k$ such that $S$ contains $S_p \cup S_\infty \cup S_{\ram} (K/k)$, 
where $S_p$ is the set of all $p$-adic primes, $S_\infty$ is the set of all archimedean places, and $S_{\ram} (K/k)$ is the set of primes which ramify in $K/k$. 
Then, for any $L \in \Omega (K_\infty / k)$ and positive integer $j$, 
we define the graded invertible $\Z_p[\GG_L]$-module 
\[
\Xi_{L/k, S} (j) := \Det_{\Z_p[\GG_L]}^{-1} (\RG (G_{L, S} , \Z_p (1-j))) \otimes_{\Z_p[\GG_L]}
\Det_{\Z_p[\GG_L]}^{-1} (X_L (-j)^+).  
\]
Then, we have an isomorphism
\[
\vartheta_{L / k, S}^j : \C_p \otimes_{\Z_p} \varepsilon_j\Xi_{L/k, S} (j) \xrightarrow{\sim} \varepsilon_j \C_p [\GG_L]
\]
for any $L \in \Omega (K_\infty / k)$ and positive integer $j$ (see Definition \ref{def:period_reg>0} below). 
Furthermore, we define $\Xi_{K_\infty/k, S} (j)$ as the projective limit of $\Xi_{L/k, S} (j)$ for $L \in \Omega(K_\infty / k)$. 
We put $r_k := [k : \Q]$. 

\begin{thm}\label{thm:zeta>0}
Assume that $k / \Q$ is unramified at $p$ and $K/k$ is also unramified at all $p$-adic primes. 
Then, for any $j \in \Z_{\geq 1}$, 
there is a (unique) $\varepsilon_j \Lambda$-basis 
\[
\varepsilon_j Z_{K_{\infty} /k, S}^{j} \in \varepsilon_j \Xi_{K_\infty / k, S} (j) 
\]
such that the following statements hold. 

\begin{itemize}
\item[(1)]
For any $L \in \Omega (K_\infty / k)$, 
the following composite map 
\[
\varepsilon_j \Xi_{K_\infty / k, S} (j) \twoheadrightarrow \varepsilon_j \Xi_{L / k, S} (j)
\xrightarrow{\vartheta_{L / k , S}^j} \varepsilon_j \C_p[\GG_{L}] 
\]
sends $\varepsilon_j Z_{K_\infty/k, S}^j$ to 
\[
(-1)^{r_k (j-1)} \cdot \varepsilon_j \Theta_{L / k , S} (j)^\#, 
\]
where $\Theta_{L / k , S} (j)$ is the value of the equivariant $L$-function at $s=j$ 
(see \S \ref{ss:L-function}). 

\item[(2)]
For any positive integers $j$ and $j'$, we have 
\[
\Twist_{1-j, 1-j'}^{\Xi} (\varepsilon_j Z_{K_{\infty} /k, S}^{j}) = \varepsilon_{j'} Z_{K_{\infty} /k, S}^{j'},  
\]
where $\Twist_{1-j, 1-j'}^{\Xi} : \varepsilon_j\Xi_{K_\infty / k, S} (j) \xrightarrow{\sim} 
\varepsilon_{j'}\Xi_{K_\infty / k, S} (j') $ is the twist map which is defined in \S \ref{ss:twist_CM}. 
\end{itemize}
\end{thm}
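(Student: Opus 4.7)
The plan is to follow the strategy of Burns--Sano \cite{BS20functional} in the equivariant setting: use the equivariant Iwasawa Main Conjecture for totally real fields to produce a canonical zeta element on the ``Stickelberger side'' at $s = 1-j$, and then transport it to $s = j$ via the cyclotomic twist, matching the resulting interpolation formula against the functional equation for the equivariant $L$-function.

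First I would use the EIMC (proved by Ritter--Weiss and Kakde) to fix a canonical basis of $\varepsilon_{1-j}\Xi_{K_\infty/k, S}(1-j)$ whose image at each finite layer $L$ encodes the equivariant Stickelberger-type value. Next I would make the twist map $\Twist^{\Xi}_{1-j, 1-j'}$ fully explicit, exploiting the fact that over $K_\infty$ the cyclotomic twist acts as a unit-valued automorphism on $\Lambda$ and on the relevant determinant modules; this makes part~(2) essentially built into the construction, and reduces the definition of $\varepsilon_j Z^j_{K_\infty/k, S}$ to a single choice at, say, $j = 1$. Uniqueness is automatic since any two $\varepsilon_j\Lambda$-bases of an invertible module differ by a unit, pinned down by the interpolation formula.

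The substantive content is the interpolation formula in part~(1). For this I would invoke the functional equation for the equivariant Artin $L$-function, which relates $\Theta_{L/k, S}(j)^{\#}$ to $\Theta_{L/k, S}(1-j)^{\#}$ up to a product of local $\varepsilon$- and $\Gamma$-factors. The archimedean factor contributes the sign $(-1)^{r_k(j-1)}$, while the $p$-adic factors match the comparison between the Bloch--Kato dual exponential map (implicit in the definition of $\vartheta_{L/k, S}^{j}$) and the Stickelberger-side regulator via the local Tamagawa number conjecture for $\Z_p(j)$ in the unramified case established in \cite[Theorem~2.3]{BS20functional}. The hypothesis that $p$ is unramified in $k$ and in $K/k$ is used precisely to trivialise the $p$-adic $\varepsilon$-factors and to put this local TNC in a form compatible with the equivariant functional equation.

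The principal obstacle is the sign- and normalisation-bookkeeping in this last step: tracking constants through the definitions of $\Det^{-1}$, $\Xi_{L/k, S}$, the period--regulator $\vartheta_{L/k, S}^j$, the cyclotomic twist, and the Bloch--Kato map simultaneously, so that they consistently produce the stated $(-1)^{r_k(j-1)}$. Once this is carried out at a single $j$, the compatibility statement~(2) propagates the result uniformly to all positive $j$, so the functional-equation input really only needs to be verified once.
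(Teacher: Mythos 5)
Your overall strategy matches the paper's in outline (a minus-side Iwasawa-theoretic input plus a local ``functional equation'' input, glued through the tautological global--local triangle and transported by the cyclotomic twist), but the proposal has a genuine gap at its central step: you propose to handle the local comparison between the Bloch--Kato dual exponential side and the Stickelberger side using the \emph{non-equivariant} local Tamagawa number conjecture of \cite[Theorem 2.3]{BS20functional}. The statement to be proved, however, is equivariant: one needs an $\varepsilon_{1-j}\Lambda$-basis of the local module $\varepsilon_{1-j}\Xi^{\loc}_{K_\infty/k,S}(1-j)$ whose image in $\varepsilon_{1-j}\C_p[\GG_L]$ at every layer $L$ is (up to sign and $\delta_v$-factors) the ratio $\Theta_{L/k,S}(1-j)^\#/\Theta_{L/k,S}(j)$, compatibly with twists. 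A non-equivariant trivialization of the local factors cannot produce such a $\Z_p[\GG_L]$-equivariant basis, and obtaining it is precisely the content of Theorem \ref{thm:local_ETNC}, i.e.\ the main result of \cite{ADK}; this is the substantive input of the whole proof, not ``sign- and normalisation-bookkeeping''. Indeed the hypothesis that $K/k$ (and not only $k/\Q$) is unramified at $p$ enters exactly through this equivariant local theorem, which your plan never supplies.

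Two further points. For the minus side, the paper does not use the EIMC of Ritter--Weiss/Kakde: that result is conditional on $\mu=0$, and by itself it does not descend to the finite-level statement at $s=0$ with the $\delta_v(L/k)$ factors (trivial-zero issues), which is indispensable because the case $j=1$ of the theorem requires $s=0$ values. Instead the paper takes the unconditional finite-level ${\rm ETNC}^-$ of Dasgupta--Kakde--Silliman (Theorem \ref{thm:ETNC-}), passes to the limit, and identifies the resulting basis with the twisted Deligne--Ribet $p$-adic $L$-function (Proposition \ref{prop:zeta=p_adic_L}, Theorem \ref{thm:zeta<1}). Finally, your claim that twist-compatibility reduces the interpolation formula (1) to a single $j$ does not work as stated: for $j\geq 1$ the complexes $\varepsilon_j\RG(G_{L,S},\Q_p(1-j))$ are not acyclic at finite level, so one cannot propagate interpolation by character evaluation; in the paper, (1) is proved at every $j$ simultaneously from the interpolation properties of both inputs (Theorems \ref{thm:zeta<1}(1) and \ref{thm:local_ETNC}(1)) together with the commutativity of the period--regulator diagram (Theorem \ref{thm: comm of period reg}), and (2) is inherited from the twist-compatibility of \emph{both} inputs, which for the local element is itself a nontrivial part of \cite{ADK}.
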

Theorem \ref{thm:zeta>0} (1) asserts that the $\varepsilon_j$-component of the ETNC for $L / k$ and $\Z_p (j)$ is valid for any $L \in \Omega (K_\infty / k)$ and any positive integer $j$. 
 Moreover, (2) asserts that these zeta elements are compatible with Tate twists. 
 In this sense, Theorem \ref{thm:zeta>0} can be regarded as a partial answer to the GIMC.
 
A key point to prove the first main result is a predicted relation between zeta elements like a ``functional equation". 
This predicted relation is called the global $\varepsilon$-conjecture (\cite{Katopre}) or the local equivariant Tamagawa number conjecture. 
In recent work \cite{ADK}, the author, Dainobu, and Kataoka study the local equivariant Tamagawa number conjecture for Tate motives and 
prove this under the same unramified condition in Theorem \ref{thm:zeta>0}. 
Moreover, from the minus component of the ETNC proved by Dasgupta--Kakde--Silliman \cite{DKS}, 
we obtain a zeta element that interpolates the values of $L$-functions at all non-positive integers (Theorem \ref{thm:zeta<1} below). 
Combining this with the result of \cite{ADK}, we prove Theorem \ref{thm:zeta>0} in \S \ref{sec:func_eq}. 

\begin{rem}
The unramified assumption in Theorem \ref{thm:zeta>0} is needed only to ensure the validity of the main result of \cite{ADK} (Theorem \ref{thm:local_ETNC} below).
If Theorem \ref{thm:local_ETNC} can be established without the unramified assumption, 
then Theorem \ref{thm:zeta>0} can also be proved without this assumption. 
\end{rem}

Next we introduce the second main result of this paper. 
We assume that the conditions in Theorem \ref{thm:zeta>0} are satisfied, namely, $k / \Q$ is unramified at $p$ and $K/k$ is also unramified at all $p$-adic primes. 
From the zeta element constructed in Theorem \ref{thm:zeta>0}, 
we can define an element 
\[
\eta_{L, S}^j \in 
\varepsilon_j \bigcap_{\Z_p [\GG_L]}^{r_k} H^1 (G_{L, S} , \Z_p (1-j))
\]
for any positive integer $j$ and any $L \in \Omega(K_\infty / k)$ (Definition \ref{def:eta} below). 
Here $\bigcap^{r_k} (-)$ is the $r_k$-th exterior power bidual defined in \S \ref{ss:def_eta}. 
We briefly describe some properties of this element below. 
A precise statement is given in \S \ref{ss:main2_proof}.

\begin{thm}\label{thm:ES}
For any positive integer $j$ and any $L \in \Omega(K_\infty / k)$, 
%the element $\eta_{L, S}^j \in 
%\varepsilon_j \bigcap_{\Z_p [\GG_L]}^{r_k} H^1 (G_{L, S} , \Z_p (1-j))$
the following statements hold. 
\begin{enumerate}
\item[(i)]
The element $\eta_{L, S}^j $ determines the initial Fitting ideal of $\varepsilon_j H^2 (G_{K, S} , \Z_p (1-j))$ (Theorem \ref{thm:Euler sys and Fitt} (i)). 
\item[(ii)] 
The image of $\eta_{L, S}^j$ under the period regulator isomorphism coincides with the values of $L$-functions at $s=j$ (Theorem \ref{thm:Euler sys and Fitt} (ii)). 
\item[(iii)]
For any positive integers $j$ and $j'$, 
the elements $\eta_{L, S}^j$ and $\eta_{L, S}^{j'}$ satisfy a certain congruence relation (Theorem \ref{thm:generalised kummer congruence}). 
\end{enumerate}
\end{thm}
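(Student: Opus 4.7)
The plan is to deduce all three parts from Theorem \ref{thm:zeta>0} via the construction of $\eta_{L,S}^j$ from the zeta element $\varepsilon_j Z_{K_\infty/k,S}^j$. By Definition \ref{def:eta}, the element $\eta_{L,S}^j$ should arise as the image of $\varepsilon_j Z_{K_\infty/k,S}^j$ under the composite of the descent projection $\varepsilon_j \Xi_{K_\infty/k,S}(j) \twoheadrightarrow \varepsilon_j \Xi_{L/k,S}(j)$ from Theorem \ref{thm:zeta>0}(1), together with a natural map from this determinant module into the exterior power bidual $\varepsilon_j \bigcap_{\Z_p[\GG_L]}^{r_k} H^1(G_{L,S}, \Z_p(1-j))$. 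This second map is available because $\RG(G_{L,S}, \Z_p(1-j))$ is concentrated in degrees one and two, and a rank computation using global duality identifies $r_k = [k:\Q]$ as the relevant exterior power.

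For part (ii), I would first check that the period regulator isomorphism $\vartheta_{L/k,S}^j$ factors through the projection to the exterior power bidual of $H^1$. Once this is done, Theorem \ref{thm:zeta>0}(1) immediately yields that the image of $\eta_{L,S}^j$ equals $(-1)^{r_k(j-1)} \varepsilon_j \Theta_{L/k,S}(j)^\#$, which translates under the Bloch--Kato dual exponential map to the required $L$-value identity.

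For part (i), the plan is to invoke the general framework of exterior power biduals from \cite{BKS20} and \cite{Sak23}, which asserts that the image in $\bigcap^{r_k} H^1$ of any basis of the determinant module $\varepsilon_j \Xi_{L/k,S}(j)$ precisely cuts out the initial Fitting ideal of the remaining cohomology, namely $\varepsilon_j H^2(G_{L,S}, \Z_p(1-j))$. Since Theorem \ref{thm:zeta>0} ensures that $\eta_{L,S}^j$ is the projection of the $\varepsilon_j \Lambda$-basis $\varepsilon_j Z_{K_\infty/k,S}^j$, this Fitting ideal is determined by $\eta_{L,S}^j$ itself; a descent identification between $H^2(G_{L,S},-)$ and $H^2(G_{K,S},-)$ then yields the precise form stated in (i).

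Finally, (iii) is the main obstacle. Theorem \ref{thm:zeta>0}(2) gives compatibility of the two zeta elements $\varepsilon_j Z_{K_\infty/k,S}^j$ and $\varepsilon_{j'} Z_{K_\infty/k,S}^{j'}$ under the twist map $\Twist_{1-j,1-j'}^{\Xi}$ at the Iwasawa level, and the task is to transport this identity down to level $L$ and into the exterior power bidual so as to produce a congruence between $\eta_{L,S}^j$ and $\eta_{L,S}^{j'}$. The main technical issue is that the twist map is not literally the identity; modulo an appropriate augmentation-type ideal of $\Lambda$ it is realised by multiplication by powers of the cyclotomic character, so one must verify compatibility of the twist on $\Xi$ with the corresponding twist on $H^1$ via the comparison between the two Bloch--Kato dual exponentials. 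Once this commutativity is established, combined with careful bookkeeping of signs and of local terms at the $p$-adic primes, the predicted generalised Kummer congruence drops out.
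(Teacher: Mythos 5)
Your overall route is the paper's own: by Definition \ref{def:eta} the element $\eta_{L,S}^j$ is the image of the zeta element of Theorem \ref{thm:zeta>0} under the descent map followed by $\Pi_L^j$, part (ii) is obtained by unwinding $\vartheta_{L/k,S}^j$ against Theorem \ref{thm:zeta>0}(1) (this is exactly the proof of Theorem \ref{thm:Euler sys and Fitt}(ii)), part (i) comes from the exterior-power-bidual machinery, and part (iii) from Theorem \ref{thm:zeta>0}(2) plus a twist compatibility. The main gap is in (i): the ``general framework'' you invoke is \emph{not} an unconditional statement about the image of an arbitrary basis of a determinant module. To apply \cite[Prop.\ A.11, Lem.\ A.7]{BS19} one must verify that the auxiliary complex $C(j)=\varepsilon_j\RG(G_{L,S},\Z_p(1-j))[1]\oplus X_L(-j)^+[-1]$ is perfect, acyclic outside degrees $0$ and $1$, has vanishing Euler characteristic, and has $H^0(C(j))=\varepsilon_jH^1(G_{L,S},\Z_p(1-j))$ torsion-free over $\Z_p$; the last two points are precisely Lemma \ref{lem:complex}(3) and Lemma \ref{lem:H^1torsion-free}, and the torsion-freeness (together with a free presentation $[P'\to P]$ and surjectivity of the dual of $H^0(C(j))\to P'$) is what allows one to compute ${\rm im}(\eta_{L,S}^j)$ with functionals on $H^1$ rather than on $P'$, yielding the equality with $\varepsilon_j\Fitt^0_{\Z_p[\GG_L]}(H^2(G_{L,S},\Z_p(1-j)))$. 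Without these checks (Proposition \ref{prop:image_Pi}) the Fitting-ideal identity does not follow, and indeed it is not even clear a priori that $\Pi_L^j$ lands in the bidual lattice.

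Two further points. The proposed ``descent identification between $H^2(G_{L,S},-)$ and $H^2(G_{K,S},-)$'' is not a valid step: these are modules over different group rings and no such identification exists or is needed; the precise result (Theorem \ref{thm:Euler sys and Fitt}(i)) is about $\varepsilon_jH^2(G_{L,S},\Z_p(1-j))$, the ``$K$'' in the informal statement being merely loose wording. For (iii), the needed input is not a comparison of Bloch--Kato dual exponential maps nor an augmentation-ideal argument (dual exponentials play no role in the congruence); it is the commutativity of the square intertwining $\Twist^{\Xi}_{1-j,1-j'}$ on $\varepsilon_j\Xi_{K_\infty/k,S}(j)$ with the $\overline{\twist^{j'-j}_{L,n}}$-semilinear map $\overline{\Twist}_{1-j,1-j',L,n}$, realized by cup product with $\xi_i^{\otimes j-j'}$ on $\Z/p^n$-cohomology, through ${\rm red}_{p^n}\circ\Pi_L^{(\cdot)}$; the paper imports this diagram from \cite[Prop.\ 4.8]{Tsoi19}, after which Theorem \ref{thm:zeta>0}(2) and Definition \ref{def:eta} give the congruence. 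Your plan points in this direction but neither supplies nor cites that diagram, and as described the mechanism is misstated.
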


Let us discuss a conjecture of Burns--Kurihara--Sano \cite{BKS20} and its relation to Theorem \ref{thm:ES}. 
For a finite abelian extension of number fields $K/k$ and an integer $j$, 
Burns, Kurihara, and Sano predict the existence of an element in an exterior power bidual of $H^1 (G_{K, S}, \Z_p (1-j))$ which is related to the leading term of the associated $L$-function at $s=j$ (\cite[Definition 2.9 and Conjecture 3.6]{BKS20}). 
When $j = 0$, this predicted element coincides with the Rubin--Stark element; for this reason, 
they called it a {\it generalized Stark element}. 
They further conjecture that generalized Stark elements satisfy a certain congruence relation as $j$ varies (cf. \cite[Conjecture 2.11]{Tsoi19}). Theorem \ref{thm:ES} shows that these conjectures hold in our setting. 
More precisely, the element $\eta_{L, S}^j$ coincides with the generalized Stark element, and 
the congruence relation in Theorem \ref{thm:ES} (iii) agrees with that predicted by Burns--Kurihara--Sano. 

In \cite{Kato91}, Kato proved that the image of the cyclotomic unit (or its twist) under the dual exponential map coincides with  the value of the $L$-function. 
When the base field $k$ is a rational field $\Q$, 
it follows from this result that the element $\eta_{L, S}^j$ coincides with the cyclotomic unit. 
We discuss this in \S \ref{k=Q}.

\begin{rem}\label{rem:ES}
(1) Let $\cK$ be the maximal abelian CM-extension of $K$ which is unramified at all $p$-adic primes. 
We set  $\cK_\infty : = \cup_{n \geq 0}\cK(\mu_{p^n})$ and  
\[
\Omega (\cK_\infty / k) := \{ k \subset L \subset \cK_\infty \; | \; [L : k] < \infty \text{ and } L \text{ is a CM-field} \}. 
\]
Let $j$ be a positive integer. 
Then, for $L \in \Omega (\cK_\infty / k) $, 
we can define the element $\eta_{L, S}^j  \in \varepsilon_j \bigcap_{\Z_p [\GG_L]}^{r_k} H^1 (G_{L, S} , \Z_p (1-j))$ 
which satisfies the statements of Theorem \ref{thm:ES}. 
Then, the system $\{ \eta_{L, S}^j \}_{L \in\Omega (\cK_\infty / k) }$ satisfies a certain norm relation (this can be checked as in \cite[Proposition 2.12]{BKS20}). 
More precisely, 
this system is an Euler system of rank $r_k$ for $(\Z_p(j), \cK_\infty)$ in the sense of \cite[Definition 2.3]{BS19}. 

(2) The general theory of higher rank Euler systems, established by Burns--Sano \cite{BS19} and Burns--Sano--Sakamoto \cite{BSS}, 
clarifies the relationship between higher rank Euler systems and the {\it higher} Fitting ideals of arithmetic modules.
However, in our case, the technical conditions \cite[Hypothesis 3.2 and 3.3]{BSS} are not satisfied in general: 
the mod $p$ reduction of the representation must not contain the trivial character or the Teichm\"{u}ller character. 
This is why we study only the initial Fitting ideal in Theorem \ref{thm:ES} (i). 
By taking non-trivial character components, we may apply their result to our Euler system, 
but we do not discuss this in this paper.
\end{rem}

%%%%%%%%%%%%%%%%%%%%%%%%%%%%%%%%%%%%%
%\subsection{Organization of this paper}\label{ss:organization}
%%%%%%%%%%%%%%%%%%%%%%%%%%%%%%%%%%%%%%%

%In \S \ref{Preliminary}, we review the theory of determinant modules and give some algebraic notation.   

%In \S \ref{sec:real}, we review local and global Galois cohomology complexes. 

%In \S \ref{s:zeta<1}, we discuss Iwasawa theory for $\mathbb{G}_m$ over a totally real field.

%In \S \ref{sec:func_eq}, we prove Theorem \ref{thm:zeta>0} combining the result of \S \ref{s:zeta<1} with a main result of \cite{ADK}. 

%In \S \ref{section:eulersystem}, we give an explicit statement of Theorem \ref{thm:ES} and prove this. 

%In \S \ref{k=Q}, we prove that the element $\eta_{L, S}$ coincides with the cyclotomic unit when the base field $k=\Q$.  

%%%%%%%%%%%%%%%%%%%%%%%%%%%
\subsection{Acknowledgments}\label{s:ack}
%%%%%%%%%%%%%%%%%%%%%%%%%%%
The author is very grateful to Naoto Dainobu and Takenori Kataoka for many interesting discussions, encouragement, and advice. 
He would also like to thank Takamichi Sano and Masato Kurihara for their helpful advice and encouragement. 
This work was supported by JSPS KAKENHI Grant Numbers 23KJ1943.

%%%%%%%%%%%%%%%%%%%%%%%%%%%%%%%%%%%%%%%%%%%%%%%%%%%%%%%%%
\section{Preliminary}\label{Preliminary} 
\subsection{Determinant module}\label{App:det}
We review the theory of the determinant module (cf.~\cite{KM76}). 
Throughout this paper, we use a notation for the determinant module in this subsection. 

Let $R$ be a commutative ring. 
We consider a category $\mathcal{P}_R$ of graded invertible $R$-modules whose objects
are pairs $(L, r)$ where $L$ is an invertible $R$-module and $r : \Spec(R) \to \Z$ is a
locally constant function, whose morphisms between $(L,r)$ and $(M,s)$ are 
isomorphisms $L \xrightarrow{\sim} M$ as $R$-module if $r = s$, or empty otherwise. 

The category $\mathcal{P}_R$ is equipped with the structure
of a (tensor) product defined by $( L , r ) \otimes_{R} (M,s) := (L \otimes_R M, r+ s)$ with the
natural associativity constraint and the commutativity constraint 
$\psi: (L,r) \otimes_{R} (M,s) \xrightarrow{\sim} (M,s) \otimes_{R} (L,r) : l \otimes m 
\mapsto (-1)^{rs} m \otimes l$. 
From this, we always identify 
$ (L,r) \otimes_{R} (M,s) = (M,s) \otimes_{R} (L,r) $. 
The unit object for the product is ${\trivial}_R := (R,0)$. 

 For each $(L,r) \in \mathcal{P}_R$, define 
 $ (L,r)^{-1} :=(\Hom_R(L,R),-r)$. 
 This becomes an inverse of $(L,r)$ by the evaluation map  
 $\ev_{(L, r)} : (L,r) \otimes_R (L,r)^{-1} \to {\trivial}_R$ 
 induced by $ L \otimes_R \Hom_R (L,R) \to R : x \otimes f \mapsto f(x)$.  
Similarly, we also have the evaluation map 
$\ev_{(L, r)^{-1}} : (L,r)^{-1} \otimes_R (L,r) \to {\trivial}_R$
induced by $  \Hom_R (L,R)  \otimes_R  L  \to R: f \otimes x \mapsto f(x)$. 
Here we note that $\ev_{(L, r)^{-1}} = (-1)^r \cdot \ev_{(L,r)}$, 
where we identify $(L,r) \otimes_R (L,r)^{-1} \overset{\psi}{\simeq} (L,r)^{-1} \otimes_R (L,r)$ as above. 

Throughout this paper, we write the evaluation map ``$\ev$" for simplicity. 
By the canonical isomorphism $L \simeq L^{\ast \ast} ; x \mapsto (f \mapsto f(x))$, 
we always identify $(L, r) = ((L, r)^{-1})^{-1}$. 

 For a ring homomorphism $f : R \to R'$, 
one has a base change functor $R' \otimes_R (-) : \mathcal{P}_{R} \to \mathcal{P}_{R'}$ defined by
$ (L,r) \mapsto (R' \otimes_R L, r \circ f' )$ where $f' : \Spec(R') \to \Spec(R)$ is induced by $f$.

For a finitely generated projective $R$-module $P$, its determinant module is defined as the exterior power 
\[
\Det_R(P) := \Big{(} \bigwedge^{\rank_R(P)} P ,\rk_R(P) \Big{)} \in \mathcal{P}_R
\]
where $\rank_R(P)$ denotes the (locally constant) rank of $P$ as an $R$-module. 
We write $\Det_R^{-1}(P)$ for the inverse of $\Det_R(P)$ in $\mathcal{P}_R$.  

Let ${\bf P}_{\rm is} (R)$ denote the category whose objects are finitely generated projective $R$-modules and 
whose morphisms are isomorphisms of $R$-modules. 
For any $P_1 , P_2 \in {\bf P}_{\rm is} (R)$ with rank $r$ and an isomorphism $f : P_1 \xrightarrow{\sim} P_2$, 
we have a canonical isomorphism $\Det_R(P_1) \xrightarrow{\sim} \Det_R(P_2) ; \wedge_{i=1}^{r} x_i \mapsto 
\wedge_{i=1}^{r} f(x_i)$. 
From this, we obtain a functor $ {\bf P}_{\rm is} (R) \to  \mathcal{P}_R ; P \mapsto \Det_R(P)$. 

For any exact sequence of $R$-modules $0 \to P_1 \xrightarrow{f} P_2 \xrightarrow{g} P_3 \to 0$ such that 
$P_1, P_2, P_3 \in {\bf P}_{\rm is} (R)$, 
we have the following canonical isomorphism between determinant modules; 
\begin{equation}\label{det_exact}
 \Det_R(P_1) \otimes_R \Det_R(P_3)  \simeq \Det_R(P_2) 
 \end{equation}
 induced by 
 \begin{align*}
 x_1 \wedge \cdots \wedge x_{r_1} \otimes 
 y_{1} \wedge \cdots \wedge y_{r_3}  
\mapsto 
f(x_1) \wedge \cdots \wedge f(x_{r_1}) \wedge  
 \widetilde{y_{1}} \wedge \cdots \wedge \widetilde{y_{r_3}} . 
\end{align*}
Here, we put $r_i := \rk_R (P_i)$ ($i= 1, 3$), $ x_1 , \dots , x_{r_1} $ 
(resp. $ y_{1} , \dots , y_{ r_3}$) are local section of $P_1$, (resp. $P_3$) and $\widetilde{y_i} \in P_2$ ($1 \leq i \leq r_3 $) is a lift of $y_i$ by the map $g$. 
We also have a canonical isomorphism: 
\[
 \Det_R^{-1} (P_3) \otimes_R \Det_R^{-1} (P_1)  \simeq \Det_R^{-1} (P_2). 
\]
such that the following diagram is commutative: 
\[
\xymatrix{
 \Det_R(P_1) \otimes_R \Det_R(P_3)  \otimes_R
 \Det_R^{-1}(P_3) \otimes_R \Det^{-1}_R(P_1)  
 \ar[r]^-{\sim} \ar[d]^{\sim}_-{\ev_{\Det(P_3)}}
 &  \Det_R(P_2) \otimes_R  \Det_R^{-1}(P_2) 
 \ar[d]^-{\sim}_-{\ev_{\Det(P_2)}}
\\
\Det_R(P_1) \otimes_R  \Det_R^{-1}(P_1) \ar[r]^-{\sim}_-{\ev_{\Det(P_1)}}
&
R, 
}
\]
 where the upper horizontal arrow is induced by the above isomorphisms and 
 the both vertical arrows and the bottom horizontal arrow are induced by the evaluation map 
 of $ \Det_R(P_3),  \Det_R(P_2) $ and  $\Det_R(P_1)$ respectively.  

We state an elementary lemma. 
\begin{lem}(\cite[pages 23--24]{KM76})\label{lem:nine_term}
Given a commutative diagram of exact sequences: 
\[
\xymatrix{
A_1 
\ar@{^(->}[r] \ar@{_(->}[d]
&A_2 
\ar@{->>}[r] \ar@{_(->}[d]
&A_3 
\ar@{_(->}[d]
\\
B_1 
\ar@{^(->}[r] \ar@{->>}[d]
&B_2 
\ar@{->>}[r] \ar@{->>}[d]
&B_3 
\ar@{->>}[d]
\\
C_1 
\ar@{^(->}[r]
&C_2 
\ar@{->>}[r]
&C_3, 
}
\]
where all modules in the diagram are in ${\bf P}_{\rm is} (R)$. 
Then, the following diagram is commutative: 
\[
\xymatrix{
\Det_R (B_2)
\ar[r]^-{\sim}
\ar[dd]^-{\sim}
&
\Det_R (A_2) \otimes_R \Det_R (C_2)
\ar[d]^-{\sim}
\\
&
\Det_R (A_1) \otimes_R \Det_R (A_3)
\otimes_R \Det_R (C_1) \otimes_R \Det_R (C_3)
\ar[d]^-{\sim}_-{\psi}
\\
\Det_R (B_1) \otimes_R \Det_R (B_3)
\ar[r]^-{\sim}
&
\Det_R (A_1) \otimes_R \Det_R (C_1)
\otimes_R \Det_R (A_3) \otimes_R \Det_R (C_3), 
}
\]
where all isomorphisms except $\psi$ are \eqref{det_exact} that is induced by the exact sequence in the diagram.  
\end{lem}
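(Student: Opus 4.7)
The proof is an elementary but tedious diagram chase in the category $\PP_R$. My plan is to reduce to a case where every module in the diagram is free over a local ring, and then verify the commutativity by tracking a single top wedge of basis elements around both sides of the square.

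First, I would reduce to the local free case. Since each of the nine modules lies in ${\bf P}_{\rm is}(R)$, both horizontal and vertical exact sequences split locally on $\Spec(R)$. Because the commutativity of an isomorphism of invertible $R$-modules can be checked locally (indeed stalkwise) and because $\Det_R$ commutes with localization, it is enough to treat the case where $R$ is local and every module in the diagram is free of finite rank. In that case $\Det_R(P) = \bigwedge^{\rk(P)} P$ is a free rank-one $R$-module and every isomorphism in the diagram is determined by where a single top wedge goes. Let me write $a_i := \rk_R(A_i)$ and similarly $b_i, c_i$.

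Next I would build compatible bases. Choose bases $\underline{a}_1$ of $A_1$, $\underline{a}_3$ of $A_3$, $\underline{c}_1$ of $C_1$, $\underline{c}_3$ of $C_3$; lift $\underline{a}_3$ to a tuple $\widetilde{\underline{a}_3}$ in $A_2$, lift $\underline{c}_1$ to $\widetilde{\underline{c}_1}$ in $C_2$ and further to $\widetilde{\widetilde{\underline{c}_1}}$ in $B_2$, and lift $\underline{c}_3$ to $\widetilde{\underline{c}_3}$ in $C_2$ (hence to $B_2$) and also to $\widetilde{\underline{c}_3}'$ in $B_3$ (hence to another element of $B_2$). By construction, the concatenations
\[
\bigl(\underline{a}_1,\ \widetilde{\underline{a}_3}\bigr)\ \text{of } A_2, \qquad
\bigl(\widetilde{\underline{c}_1},\ \widetilde{\underline{c}_3}\bigr)\ \text{of } C_2
\]
are bases, and analogously on the columns $B_1, B_3$. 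The upshot is that we obtain two bases of $B_2$ from the two filtrations: one adapted to the middle row, one adapted to the middle column, each consisting of the same four blocks but concatenated in a different order.

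Then I would compute both compositions explicitly. Unwinding the definition \eqref{det_exact}, the top-right path sends $\bigwedge \underline{a}_1 \wedge \bigwedge \widetilde{\underline{a}_3} \wedge \bigwedge \widetilde{\widetilde{\underline{c}_1}} \wedge \bigwedge \widetilde{\underline{c}_3}$ to $\bigl(\bigwedge \underline{a}_1\bigr) \otimes \bigl(\bigwedge \underline{a}_3\bigr) \otimes \bigl(\bigwedge \underline{c}_1\bigr) \otimes \bigl(\bigwedge \underline{c}_3\bigr)$; the bottom-left path, routed through the middle column, sends the same (modulo reordering) wedge to $\bigl(\bigwedge \underline{a}_1\bigr) \otimes \bigl(\bigwedge \underline{c}_1\bigr) \otimes \bigl(\bigwedge \underline{a}_3\bigr) \otimes \bigl(\bigwedge \underline{c}_3\bigr)$. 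The two preimages in $B_2$ differ by the transposition which moves the block $\widetilde{\underline{a}_3}$ past $\widetilde{\widetilde{\underline{c}_1}}$, and this contributes the sign $(-1)^{a_3 c_1}$; this is exactly the sign introduced by $\psi$ acting on $\Det_R(A_3) \otimes \Det_R(C_1)$ in the graded category $\PP_R$. Consequently both paths from $\Det_R(B_2)$ agree after applying $\psi$, which is the assertion of the lemma.

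The one genuine obstacle is bookkeeping. I must check independence from the choices of lifts (equivalently, that adding an element of $A_1$ to a lift of $\underline{a}_3$, or adding an element of $B_1$ to a lift of $\underline{c}_3'$, does not alter the top wedge), and I must verify that the nine-term diagram itself really distributes into the two length-two factorizations claimed in the statement. Both points are routine once one fixes the orientations: lifts differ by terms in the preceding piece of the filtration, and such terms wedge to zero against the higher exterior factors. Once this is dealt with, the sign comparison above finishes the proof.
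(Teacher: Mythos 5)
The paper offers no proof of this lemma at all---it is stated with a bare citation to \cite[pages 23--24]{KM76}---so your argument is to be judged against the standard verification rather than anything in the text, and it passes: it is essentially that standard verification, carried out correctly. Your reduction is legitimate (an equality of two isomorphisms of invertible $R$-modules may be checked after localization at every prime, $\Det_R$ commutes with base change, finitely generated projectives over a local ring are free, and short exact sequences of projectives split), and the sign bookkeeping is right: the column-adapted and row-adapted bases of $B_2$ consist of the same four blocks, in the orders $(A_1,A_3,C_1,C_3)$ and $(A_1,C_1,A_3,C_3)$ respectively, so the two top wedges differ by exactly $(-1)^{a_3c_1}$, which is precisely the Koszul sign that $\psi$ contributes on $\Det_R(A_3)\otimes_R\Det_R(C_1)$ in $\PP_R$. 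When writing this up, two of the points you call routine deserve an explicit line each: (i) the block of the row-adapted basis of $B_2$ lying over the image of $\underline{a}_3$ in $B_3$ can be taken to be $\widetilde{\underline{a}_3}\subset A_2$ itself, and this uses the commutativity of the original nine-term diagram (via $A_2\to A_3\to B_3 = A_2\to B_2\to B_3$); without it the two adapted bases would not share their blocks and the comparison would collapse; (ii) independence of the choices of lifts holds because any two lifts differ by an element of the preceding step of the relevant filtration ($A_1$ inside $A_2$ or $C_2$ inside $C_1$-lifts, $A_2$ inside $B_2$ for the $C_2$-lifts, $B_1$ or $A_2+B_1$ for the lifts of the $B_3$-basis), and such a difference wedges to zero because the full top exterior power of that submodule already appears among the earlier factors---for the last block this is exactly why one needs both $A$-blocks and the $C_1$-block to occur earlier in the wedge. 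With those two sentences added, your proof is complete and is the same in substance as the argument behind the Knudsen--Mumford reference.
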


For any perfect complex $C= [ \cdots C^i \to C^{i+1} \to \cdots]$ of $R$-modules, 
 its determinant module is defined by 
 \[
 \Det_R(C) := \bigotimes_{i \in \Z} \Det_R^{(-1)^i} (C^i)  \in \mathcal{P}_R. 
 \]
Let $D^{\perf} (R)$ be the derived category of perfect complexes of $R$-modules. 
Then, this determinant can be extended to a functor 
\[ 
D^{\perf} (R) \to \mathcal{P}_R ;  C \mapsto \Det_R(C)
\]
which is called the determinant functor. 
Then, for any quasi-isomorphism $f : C_1 \xrightarrow{\sim} C_2$ with $C_1 , C_2 \in D^{\perf}(R)$, 
we have two canonical isomorphisms $f_1: \Det_R (C_1) \xrightarrow{\sim} \Det_R (C_2)$ and 
$f_2: \Det_R^{-1} (C_1) \xrightarrow{\sim} \Det_R^{-1} (C_2)$ such that 
the following diagram is commutative: 
\[
\xymatrix{
\Det_R(C_1) \otimes_R \Det_R^{-1} (C_1) 
\ar[r]^-{\ev}_-{\sim} 
\ar[d]^-{\sim}_-{f_1 \otimes f_2}
&{\trivial}_R
\ar@{=}[d]
\\
\Det_R(C_2) \otimes_R \Det_R^{-1} (C_2) 
\ar[r]^-{\ev}_-{\sim} 
&{\trivial}_R
}
\]
We denote  both isomorphisms $f_1$ and $f_2$ by $f$ for simplicity, and this will cause no confusion. 

Finally, we enumerate some properties of the determinant module. 
\begin{itemize}
\item[(1)] 
For any exact triangle $C_1 \to C_2 \to C_3$ in $D^{\perf} (R)$, 
there is a canonical isomorphism 
$
 \Det_R(C_1) \otimes_R \Det_R(C_3)  \simeq \Det_R(C_2).  
$
\item[(2)] For any ring homomorphism $f : R \to R'$ and any $P \in {\bf P}_{\rm is} (R)$,  
(resp. $C \in D^{\perf} (R)$), 
we have a canonical isomorphism 
\[
R' \otimes_R \Det_{R} (P) \simeq \Det_{R'} (R' \otimes_R P) 
\;\;\;
(\text{resp. }  
R' \otimes_R \Det_{R} (C) \simeq \Det_{R'} (R' \otimesL_R C) 
).
\]
\item[(3)]
For any $C \in D^{\perf} (R)$ such that $H^i (C)  \in {\bf P}_{\rm is} (R)$ for each $i$,  
there is a canonical isomorphism 
\[
\Det_{R} (C) \simeq \bigotimes_{i  \in \Z} \Det_R^{(-1)^i} (H^i (C)). 
\]
In particular, if $C$ is acyclic, we have a canonical isomorphism 
$\Det_{R} (C) \simeq {\trivial}_R$. 
\end{itemize}

%For any $R$-module $M$, put $M^\ast := \Hom_R (M, R)$ and for $(L, r) \in \mathcal{P}_R$, 
%define $(L. r)^\vee := (L^\ast , r)$. 
%Note that this induces an anti equivalence 
%$(-)^\vee : \mathcal{P}_R \xrightarrow{\sim} \mathcal{P}_R$ 
%and clearly, we know $(L , 0)^\vee = (L, 0)^{-1}$. 
%For any morphism $f :[L, r] \xrightarrow{\sim} [M,r]$ in $\mathcal{P}_R$, 
%we write $f^{\vee} : [M, r]^\vee \xrightarrow{\sim} [L,r]^\vee $ for the morphism in $\mathcal{P}_R$ 
%which is correspodense to the morphism $f$ by the anti equivalence $()^\vee$. 

%Then, for any $P \in {\bf P}_{\rm is} (R)$, 
%we have an isomorphism
%\[
%\Det_R (P^\ast) \xrightarrow{\sim} \Det_R (P)^\ast
%\]
%which is defined by 
%\[
%f_1 \wedge \cdots \wedge f_r \mapsto \parenth{
%x_1 \wedge \cdots \wedge x_r \mapsto \sum_{\sigma \in \mathfrak{S}_r} \sgn(\sigma) f_1 (x_{\sigma(1)}) \cdots f_r (x_{\sigma(r)}) 
%}.
%\] 
%Similarly, for any $C \in D^{\perf} (R)$, we also have an isomorphism
%\[
%\Det_R (\RHom_{R} (C, R)) \simeq \Det (C)^\vee. 
%\]

The determinant modules should be treated as graded invertible modules like this, 
but to ease the notation we omit explicit references to the grades. 
In particular, we denote ${\trivial}_R $ by $R$ for simplicity. 

%%%%%%%%%%%%%%%%%%%%%
\subsection{Algebraic notation}\label{ss:alg_notation}
%%%%%%%%%%%%%%%%%%%%%
Throughout this paper, we use the following notation. 

Let $R$ be a commutative ring and $G$ a finite abelian group. 
Then, we write
\[
\# : R[G] \to R[G], 
\quad
x \mapsto x^\#
\]
for the $R$-algebra automorphism of $R[G]$ (the involution)
defined by $\sigma \mapsto \sigma^{-1}$ for any $\sigma \in G$. 
%For any $x \in R[G]$, we write $x^\#$ for the image of $x$ under the involution $\#$.  
For any $R[G]$-module $M$, 
let $M^\#$ be the $R[G]$-module that is the same as $M$ as an $R$-module but $G$ acts on $M^\#$ via the involution $\#$. 
%Similar notations will be used for $R[[\GG]]$-modules when $\GG$ is a profinite group. 

For any $R[G]$-module $M$, put $M^\ast := \Hom_R(M, R)$. 
We define an action of $G$ on $M^\ast$ by 
\[
(\sigma \cdot f) (a) := f (\sigma^{-1} a) ,\;\;\;\; (\sigma \in G, f \in M^\ast, a \in M). 
\]
Then, we have an $R[G]$-isomorphism
\begin{equation}\label{homRRG}
\Hom_R (M, R) 
\stackrel{\sim}{\rightarrow} 
\Hom_{R[G]} (M, R[G])^\# \; ;
f \mapsto 
\sum_{\sigma \in G} f(\sigma(\cdot))\sigma^{-1}. 
\end{equation}
For any $R[G]$-homomorphism $\phi: M \to N$, 
we write $\phi^\ast : N^\ast \to M^\ast$ for the $R[G]$-homomorphism which is induced by $\phi$.

%From this, for any $P \in {\bf P}_{\rm is} (R[G])$ (resp.~$C \in D^{\perf} (R[G])$),  
%we have a canonical isomorphism 
%\begin{equation}\label{Det_Hom}
%\Det_{R[G]} (P) \simeq \Det_{R[G]} (\Hom_{R} (P, R))^{\#, \vee} , 
%\parenth{\text{resp.}  \Det_{R[G]} (C) \simeq \Det_{R[G]} (\RHom_{R} (C, R))^{\#, \vee}}, 
%\end{equation}
%where 
%$
%(-)^\vee : \mathcal{P}_{R[G]} \xrightarrow{\sim} \mathcal{P}_{R[G]}
%$ 
%is the anti equivalence in \S \ref{App:det}. 

%\[
%M^* := \Hom_R (M, R) = \Hom_{R[G]} (M, R[G])^\#. 
%\] 
%Then the functor $\Det_{R[G]}^{-1} (-)$ is equal to $\Det_{R[G]}(-)^{*, \#}$. 

%%%%%%%%%%%%%%%%%%%%%%%%%%%%%%%%%%%%%%%%%%%%%%%%%%%%%%%%%
\section{Galois cohomology complex}\label{sec:real}
%%%%%%%%%%%%%%%%%%%%%%%%%%%%%%%%%%%%%%%%%%%%%%%%%%%%%%%%%

In this section, 
we review the local and global Galois cohomology complexes. 
We fix an odd prime $p$. 
We also fix an isomorphism $\C \simeq \C_p$ and always identify them. 

\subsection{Notation}\label{ss:notation}
Let $K/k $ be a finite abelian CM-extension. 
We put $G := \Gal (K/k)$.  
We write $\Sigma_K = \{ \iota : K \hookrightarrow \C \}$ for the set of embeddings, so $\# \Sigma_K  =[K: \Q]$. 
We put 
%$r_k := [k : \Q]$ and 
\[
X_K (j) = \bigoplus_{\iota \in \Sigma_K} \Z_p (j). 
\]
As in \cite[\S 2.1]{BKS20}, we fix a $\Z_p$-basis of $X_K (j)$ as follows. 
We set $e_{p^\infty} := (\exp(\frac{2 \pi  \sqrt{-1}}{p^n}))_n  \in \Z_p (1)$. 
Then, we obtain a $\Z_p$-basis $\{ e_\iota^j \mid \iota \in \Sigma_K\}$ of $X_K (j)$, 
where $e_{\iota}^j$ is the element of $X_K (j)$ whose 
$\iota$-component is $e_{p^\infty}^{\otimes j}$ and other components are zero. 

We also consider the Betti cohomology space
\[
H_K (j) := \bigoplus_{\iota \in \Sigma_K} (2\pi \sqrt{-1})^j \Q
\]
for any $j \in \Z$. 
Let $b_{\iota}^j$ be the element of $H_K (j)$ whose 
$\iota$-component is $(2 \pi \sqrt{-1})^j$ and other components are zero. 

The group $G$ acts on $\Sigma_K$ by $\sigma \cdot \iota := \iota \circ \sigma^{-1}$ ($\sigma \in G, \iota \in \Sigma_K$).
Then, $G$ also acts on $H_K (j)$ and $X_K(j)$ by $\sigma \cdot (a_\iota)_\iota := (a_\iota)_{\iota \circ \sigma^{-1}}$, 
making them free modules of rank $[k:\Q]$ as a $\Q [G]$-module and a $\Z_p [G]$-module, respectively.
In addition, the complex conjugation $c_{\R} \in \Gal (\C / \R)$ acts on $H_K (j)$ and $X_K (j)$ by 
%$c_{\R} \cdot(a_\iota)_\iota := (c_{\R} \cdot a_\iota)_{c_{\R} \circ \iota}$ 
$c_{\R} \cdot(a_\iota)_\iota := ((-1)^j \cdot a_\iota)_{c_{\R} \circ \iota}$. 
We write $H_K (j)^+$ (resp. $X_K (j)^+$)
for the subspace of $H_K (j)$ (resp. $X_K (j)$) on which the complex conjugation $c_{\R}$ acts trivially. 
Then, we obtain a canonical $\Q_p [G \times \Gal (\C / \R)]$-isomorphism 
\begin{equation}\label{Y Betti}
\Q_p \otimes_{\Z_p} X_K(j)
% \simeq \bigoplus_{\iota \in \Sigma_K} \Q_p (j)
\stackrel{\sim}{\rightarrow}  \Q_p \otimes_\Q H_K (j)
\end{equation}
by sending $1 \otimes e_{\iota}^j$ to $1 \otimes b_{\iota}^j$. 

Next, we consider an $\R[G]$-isomorphism
\begin{equation}
\R \otimes_\Q K \simeq
\R \otimes_\Q (H_K (j)^+ \oplus H_K (j-1)^+ )
\end{equation}
that is given by 
\[
1 \otimes x \mapsto \begin{cases}
\left(\left(\Real(\iota(x))\right)_{\iota \in \Sigma_K}, \left(\sqrt{-1} \Imag(\iota(x)) \right)_{\iota \in \Sigma_K} \right)
& \text{if } j \text{ is even,} \\
\left(\left(\sqrt{-1} \Imag(\iota(x)) \right)_{\iota \in \Sigma_K}, \left( \Real(\iota(x))\right)_{\iota \in \Sigma_K} \right)
& \text{if } j \text{ is odd,} 
\end{cases}
\]
for $x \in K$. 
Combining this with \eqref{Y Betti}, we obtain a $\C_p[G]$-isomorphism 
\begin{equation}\label{embedding}
\gamma_{K}^j:\C_p \otimes_\Q K \simeq
\C_p \otimes_{\Z_p} (X_K (j)^+ \oplus X_K (j-1)^+ ). 
\end{equation}

On the other hand, we consider a $\Z_p[G]$-isomorphism 
\begin{equation}\label{2zure}
\beta_{K}^j: X_K (j)^+ \oplus X_K (j-1)^+
 \stackrel{\sim}{\rightarrow} X_K (j)
\end{equation}
 that is given by 
 \[
 ((x_\iota)_{\iota \in \Sigma_K},(y_\iota)_{\iota \in \Sigma_K}) \mapsto 
(2 x_\iota +  \frac{1}{2} (y_\iota \otimes e_\iota^1))_{\iota \in \Sigma_K }.  
\]
By composing \eqref{embedding} with this, 
we obtain a $\C_p[G]$-isomorphism
\[
\alpha^j_{K}:= \beta_{K}^j \circ \gamma_K^j : \C_p \otimes_\Q K \xrightarrow{\sim}
 \C_p \otimes_{\Z_p} X_K (j)  
\]
for any $j \in \Z$.

%%%%%%%%%%%%%%%%%%%%%%%%%%%%%%%%%%%%%%%%%%%%%%%%%%%%
\subsection{Local Galois cohomology complex}\label{ss:local}
%%%%%%%%%%%%%%%%%%%%%%%%%%%%%%%%%%%%%%%%%%%%%%%%%%%%

For any prime $v$ of $k$, we consider a semi-local Galois cohomology complex which is given by
\[
\RG(K_v, \Z_p (j)) := \bigoplus_{w|v} \RG(K_w, \Z_p (j)) 
\]
for any integer $j$, where $w$ runs over all primes of $K$ above $v$. 

Let $S$ be a finite set of places of $k$ which contains $S_\infty \cup S_p \cup S_{\ram} (K/k)$ as in the Introduction. 
We write $S_f$ for the subset of finite primes in $S$.

\begin{defn}\label{def:Xi_loc}
We define a graded invertible $\Z_p [G]$-module $\Xi^{\loc}_{K/k, S} (j)$ by 
\[
\Xi_{K / k, S}^{\loc} (j)
= \parenth{\bigotimes_{v \in S_f}  \Det_{\Z_p[G]}^{-1}  \Big{(}\RG (K_v , \Z_p (j)) \Big{)}} 
\otimes_{\Z_p[G]}
\Det_{\Z_p[G]}^{-1} (X_K (j)).
\]
We note that the grade of this is zero. 
\end{defn}

The goal of this subsection is to define an isomorphism 
\[
 \Phi_{K/k, S}^{\loc, j}  : \C_p \otimes_{\Z_p} \Xi_{K / k, S}^{\loc} (j) \xrightarrow{\sim} \C_p [G]
\]
for any $j \in \Z_{\leq 0}$ as in \cite[\S 8.3]{ADK}. 

For a $\Q_p[G]$-module $M$, we set $M^\ast := \Hom_{\Q_p} (M, \Q_p) \overset{\eqref{homRRG}}{\simeq} \Hom_{\Q_p[G]} (M, \Q_p[G])^\#$. 

For any finite prime $v$ of $k$ and $j \in \Z_{\leq 0}$, 
we consider a perfect pairing induced by the cup product and the invariant maps
 (i.e., the local duality)
\[
H^1 (K_v, \Q_p (j)) \times H^1 (K_v, \Q_p (1-j)) \xrightarrow{\cup} H^2 (K_v, \Q_p (1)) \xrightarrow{\sum_{w\mid v} \inv_{w}} \Q_p. 
\]
Using this, for any $p$-adic prime $v$ of $k$ and $j \in \Z_{\leq 0}$,
we consider the Bloch-Kato dual exponential map 
\[
\exp_{\Q_p (j)}^\ast : H^1 (K_v, \Q_p (j)) \xrightarrow{\sim}
H^1 (K_v, \Q_p (1-j))^\ast 
\xrightarrow{(\exp_{\Q_p (1-j)})^\ast} 
K_v^\ast,
\]
where we put $K_v := k_v \otimes_{k} K \simeq \oplus_{w \mid v} K_w$, 
the first isomorphism is induced by the above perfect pairing, 
and the second arrow is the dual of the exponential map $\exp_{\Q_p (1-j)}$. 
Here we note that some authors use different conventions concerning the order of cup products. 

Next we define an isomorphism
\[
\begin{cases}
\vartheta_{K_v}^{j} : 
 \Det_{\Q_p[G]}^{-1} (\RG(K_v, \Q_p (j)))
\xrightarrow{\sim} 
\Q_p [G]
&\text{if } v \nmid p, \\
\phi_{K_v}^j:  \Det^{-1}_{\Q_p[G]} (\RG(K_v, \Q_p (j)))
\xrightarrow{\sim} 
\Det_{\Q_p[G]}(K_v^\ast)
&\text{if }v \mid p, 
\end{cases}
\]
for a finite prime $v$ of $k$ and $j \in \Z_{\leq 0}$. 
The construction depends on whether $j = 0$ or $j < 0$.

Suppose $j<0$ and $v \nmid p$.
Then, the complex 
$\RG(K_v, \Q_p (j))$ is acyclic and the isomorphism $\vartheta_{K_v}^{j}$ is defined by the canonical one. 

Suppose $j<0$ and $v \mid p$.
Then, the complex $\RG(K_v, \Q_p (j))$ is acyclic outside degree one and the dual exponential map $\exp_{\Q_p (j)}^\ast$ is an isomorphism. 
%Then the canonical map in Proposition \ref{prop:det_ses4} and $\exp_{\Q_p (j)}^\ast$
These facts induce the isomorphism $\phi_{K_v}^j$.

Suppose $j = 0$. For each finite prime $v$ of $k$, 
we consider the following composite map 
\begin{equation}\label{rec_v}
 H^0 (K_v, \Q_p)
 =
\bigoplus_{w \mid v} \Q_p  
\xrightarrow{(\ord_{K_v})^\ast}
(\Q_p \otimes_{\Z_p} \bigoplus_{w \mid v}  \widehat{K_w^\times})^\ast 
\simeq 
\Hom ( \bigoplus_{w \mid v} G_{K_w}^{\ab} , \Q_p)
=
H^1 (K_v, \Q_p),  
\end{equation}
where $\widehat{K_w^\times}$ is the $p$-adic completion of $K_w^\times$, 
$\ord_{K_v} : \oplus_{w \mid v}  \widehat{K_w^\times} \to \oplus_{w \mid v} \Z_p$ is the normalized valuation map, 
 $G_{K_w}^{\ab}$ is the Galois group of the maximal abelian extension over $K_w$, 
 and the isomorphism is induced by the reciprocity map of local class field theory.  

Now suppose $j =0$ and $v \nmid p$. 
In this case, the complex 
$\RG(K_v, \Q_p (0))$ is acyclic outside degree zero and one
and the map \eqref{rec_v} is an isomorphism. 
Then, we define $\vartheta_{K_v}^{0}$ as
\begin{align*}
\Det_{\Q_p[G]}^{-1}(\RG(K_v, \Q_p))
& \simeq  \Det^{-1}_{\Q_p[G]}(H^0(K_v, \Q_p)) \otimes_{\Q_p[G]} \Det_{\Q_p[G]}(H^1(K_v, \Q_p))\\
&\overset{(\ref{rec_v})}{\simeq} \Det^{-1}_{\Q_p[G]}(H^1(K_v, \Q_p)) \otimes_{\Q_p[G]} \Det_{\Q_p[G]}(H^1(K_v, \Q_p))\\
&\overset{\ev}{\simeq} \Q_p[G].
\end{align*}

Finally, we suppose $j = 0$ and $v \mid p$.
Then, the complex $\RG(K_v, \Q_p (0))$ is acyclic outside degree zero and one again. 
In this case, we consider the following commutative diagram: 
\[
\xymatrix{
0 \ar[r]
&
\bigoplus_{w \mid v} \Q_p 
\ar[r]^-{(\ord_{K_v})^\ast} \ar@{=}[d]
&
(\Q_p \otimes_{\Z_p} \bigoplus_{w \mid v} \widehat{K_w^\times} )^\ast
\ar[r]
\ar[d]^-{\sim}
&
(\Q_p \otimes_{\Z_p} \bigoplus_{w \mid v} U_{K_w})^\ast
\ar[r] \ar[d]^-{\sim}_-{(\oplus_{w \mid v}\exp_{p})^\ast}
&0
\\
0 \ar[r]
&
H^0 (K_v, \Q_p)
\ar[r]^-{\eqref{rec_v}}
&
H^1 (K_v, \Q_p)
\ar[r]^-{\exp_{\Q_p}^\ast}
&
K_v^\ast 
\ar[r]
&
0, 
} 
\]
where the middle vertical arrow is induced by the reciprocity map and 
$\exp_p : \Q_p \otimes U_{K_w} \xrightarrow{\sim} K_w$ is the $p$-adic exponential map. 
Since the upper sequence is clearly exact, so is the bottom sequence.
We then obtain a canonical isomorphism
\[
\Det_{\Q_p[G]}(H^1 (K_v, \Q_p)) \simeq \Det_{\Q_p[G]}(H^0 (K_v, \Q_p)) \otimes_{\Q_p[G]} \Det_{\Q_p[G]}(K_v^\ast ).
\]
Using this isomorphism, we define $\phi_{K_v}^0$ as 
\begin{align*}
  \Det_{\Q_p[G]}^{-1}(\RG(K_v, \Q_p))
& \quad  \simeq \Det^{-1}_{\Q_p[G]}(H^0(K_v, \Q_p)) \otimes_{\Q_p[G]} \Det_{\Q_p[G]}(H^1(K_v, \Q_p))\\
& \quad \simeq \Det^{-1}_{\Q_p[G]}(H^0(K_v, \Q_p)) \otimes_{\Q_p[G]} \Det_{\Q_p[G]}(H^0 (K_v, \Q_p)) \otimes_{\Q_p[G]} \Det_{\Q_p[G]}(K_v^\ast )\\
& 
\quad \overset{\ev}{\simeq} 
%\Q_p[G] \otimes_{\Q_p[G]} \Det_{\Q_p[G]}(K_v^\ast )\simeq 
\Det_{\Q_p[G]}(K_v^\ast ).
\end{align*}
This completes the construction of $\vartheta_{K_v}^j$ and $\phi_{K_v}^j$ for $j \leq 0$.

For each $j \in \Z_{\leq 0}$, combining $\vartheta_{K_v}^j$ and $\phi_{K_v}^j$ for each $v \in S_f$ yields an isomorphism 
\begin{equation}\label{dual_version}
\bigotimes_{v \in S_f} \Det_{\Q_p[G]}^{-1} (\RG(K_v, \Q_p (j))) \simeq 
\Det_{\Q_p[G]} ((\Q_p \otimes_{\Q} K )^\ast).
\end{equation}

\begin{defn}\label{def:vartheta_loc}
For any $j \in \Z_{\leq 0}$, we define an isomorphism 
\[
 \Phi_{K/k, S}^{\loc, j}  : \C_p \otimes_{\Z_p} \Xi_{K / k, S}^{\loc} (j) \xrightarrow{\sim} \C_p [G]
\]
by the following composite map  
\begin{align}\label{second_map}
&  \C_p \otimes_{\Z_p} \Xi_{K/k, S}^{\loc} (j) 
= \C_p \otimes_{\Z_p} \left(\bigotimes_{v \in S_f} \Det_{\Z_p[G]}^{-1} (\RG (K_v , \Z_p(j)) )
\otimes \Det_{\Z_p[G]}^{-1} (X_K (j))\right)
 \\& \quad
\overset{\eqref{dual_version} \otimes \eqref{Xpair} }{\xrightarrow{\sim}} 
\C_p \otimes_{\Q_p} \parenth {\Det_{\Q_p[G]} ((\Q_p \otimes K )^\ast) \otimes_{\Q_p[G]} \Det_{\Q_p[G]}^{-1} ((\Q_p \otimes X_K (1-j))^\ast)} 
\\ 
&\quad
\overset{(\alpha_K^{1-j})^{\ast, -1} }{\xrightarrow{\sim}} 
\C_p \otimes_{\Q_p} 
\parenth{
\Det_{\Q_p[G]}( (\Q_p \otimes X_K (1-j) )^\ast) \otimes_{\Q_p[G]} \Det_{\Q_p[G]}^{-1}
( ( \Q_p \otimes X_K (1-j)  )^\ast) 
}
\\
&\quad
\overset{\ev}{\simeq}
\C_p[G], 
\end{align}
where the first isomorphism is induced by \eqref{dual_version} and  
\begin{equation}\label{Xpair}
X_K(j) \simeq \Hom_{\Z_p} (X_K (1-j) , \Z_p)
\end{equation}
that is given by $e_\iota^j \mapsto e_\iota^{1-j, \ast}$. 
Here $e_\iota^{1-j, \ast}$ is the dual basis of $e_\iota^{1-j}$. 

\end{defn}

\begin{rem}\label{rem:sign}
The above isomorphism $ \Phi_{K/k, S}^{\loc, j} $ is equal to the one \cite[equation (8.6)]{ADK}. 
Therefore, we note that $ \Phi_{K/k, S}^{\loc, j} $ coincides with the isomorphism $ \vartheta_{K/k, S}^{\loc, j} $ in \cite[Definition 8.1]{ADK} up to sign $(-1)^{[k:\Q]}$ 
(see \cite[Proposition 8.5]{ADK}). 
\end{rem}

%\MEMO{前の論文と符号をずらしたものに対して同じ記号を使ってしまった。変えるべきか？}

%%%%%%%%%%%%%%%%%%%%%%%%%%%%%%%%%%%%%%%%%%%%%%%%%%%%%%%%
\subsection{Global cohomology complex}\label{ss:period reg map}
%%%%%%%%%%%%%%%%%%%%%%%%%%%%%%%%%%%%%%%%%%%%%%%%%%%%%%%%
In this subsection, we review the global Galois cohomology complex.  
%Recall that $K/k$ be a CM-abelian extension and $G:= \Gal (K/k)$. 
%Let $S$ be a finite set of primes of $k$ such that $S$ contains 
%$S_\infty \cup S_p \cup S_{\ram} (K/k)$ and we set $S_f := S \setminus S_\infty$ as in the previous section. 

For any integer $j$, we define a complex $\Delta_{K, S} (j)$ by  
\[
\Delta_{K, S} (j) := 
{\rm Cone} \Big{(}\RG (G_{K, S} , \Z_p (j)) \to \bigoplus_{v \in S_f} \RG (K_v, \Z_p (j)) \Big{)} [-1],  
\]
where $G_{K, S}$ is the Galois group of the maximal Galois extension over $K$ unramified outside the places 
of $K$ above $S$. 
Then, we have a tautological exact triangle
\begin{equation}\label{triangle}
\Delta_{K, S} (j) \to \RG (G_{K, S} , \Z_p (j)) \to \bigoplus_{v \in S_f} \RG (K_v, \Z_p (j)). 
\end{equation}
For any integer $j$, we set an idempotent $\varepsilon_{j} \in \Z_p[G]$ by 
\[
\varepsilon_j := \frac{1 + (-1)^j c}{2}, 
\]
where $c \in G$ is the complex conjugation. 
%Clearly, we know 
%\[
%\varepsilon_jX_K (j) = X_K (j)^+ \text{ and }  \varepsilon_jX_K (j-1)^+ =0. 
%\]
%Then, there is a tautological exact triangle;
%\begin{equation}\label{tautological triangle}
%\Delta_{K, c} (1-j) \to \RG (G_{K, S(K)} , \Z_p (1-j)) \to \parenth{\bigoplus_{v \in S(K)_f} \RG (K_v , \Z_p (1-j))} 
%\end{equation} 
%For any $\Q_p[G]$-module $M$, 
%we put $M^\ast := \Hom_{\Q_p} (M, \Q_p)$. 

First, we observe the $\varepsilon_{1-j}$-component of Galois cohomology complexes for $\Z_p (j)$ when $j$ is a non-positive integer. 

\begin{lem}\label{lem:complex}
The following statements are valid. 
\begin{itemize} 
\item[(1)] For any $j \in \Z_{<0}$, the complex $\Q_p \otimesL_{\Z_p} \varepsilon_{1-j} \Delta_{K, S} (j) $ is acyclic. 

\item[(2)]  The complex $\Q_p \otimes_{\Z_p} \varepsilon_1 \Delta_{K, S} (0) $ is acyclic outside degree one and two. 
Furthermore, we have a canonical isomorphism 
\[
\Q_p \otimes_{\Z_p} \varepsilon_1 H^i ( \Delta_{K, S} (0)) \simeq 
\begin{cases}
\varepsilon_1 \bigoplus_{w \in S_{f, K}} \Q_p & \text{ if } i=1, \\
\varepsilon_1 (\Q_p \otimes_{\Z} \OO_{K, S}^\times)^\ast & \text{ if } i=2, 
\end{cases}
\]
where $S_{f, K}$ is the set of primes of $K$ above $S_f$ and $\OO_{K, S}^\times$ is the $S$-unit group of $K$. 

\item[(3)] For any $j \in \Z_{\leq 0}$, the complex $\varepsilon_{1-j} \RG (G_{K, S} , \Q_p (j))$ is acyclic outside degree one and 
the following composite map 
\begin{equation}\label{Lemma(3)}
\varepsilon_{1-j} H^1 (G_{K, S} , \Q_p (j)) \to \varepsilon_{1-j}\bigoplus_{v \mid p} H^1 (K_v , \Q_p (j)) \xrightarrow{\exp_{\Q_p(j)}^\ast} 
\varepsilon_{1-j}(\Q_p \otimes_{\Q} K)^\ast
\end{equation}
is an isomorphism, where the first arrow is the natural localization map.  
\end{itemize}
\end{lem}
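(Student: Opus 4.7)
The three parts are linked by the tautological triangle \eqref{triangle}; my strategy is to prove (3) first and then deduce (1) and (2) from the associated long exact cohomology sequence together with a computation of the local factors $\varepsilon_{1-j}\RG(K_v,\Q_p(j))$.

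For (3), the vanishing $\varepsilon_{1-j} H^0(G_{K,S},\Q_p(j))=0$ is immediate: for $j<0$, $\Q_p(j)^{G_K}=0$ because the cyclotomic character has infinite order; for $j=0$, $H^0=\Q_p$ has trivial $c$-action and is killed by $\varepsilon_1=(1-c)/2$. For the vanishing of $\varepsilon_{1-j}H^2(G_{K,S},\Q_p(j))$, I would apply Poitou--Tate duality, which identifies this group with the cokernel of a dual-localization map into $H^1(G_{K,S},\Q_p(1-j))^{\vee}$, using that the local $H^2$ terms vanish for $p$ odd, $K$ totally complex, and $j\le 0$ with $j\ne 1$. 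For $j<0$, Soul\'e's rank formula gives $\dim_{\Q_p}H^1(G_{K,S},\Q_p(1-j))=r_2(K)=[K:\Q]/2$, and the $c$-action on this cohomology (known via the Borel regulator in the CM case) places the whole group in the $c$-eigenspace opposite to $(-1)^{1-j}$, so $\varepsilon_{1-j}H^1(G_{K,S},\Q_p(1-j))^{\vee}=0$ and the cokernel vanishes. For $j=0$, the vanishing of $\varepsilon_1 H^2(G_{K,S},\Q_p)$ follows from an Artin--Verdier duality analysis showing that the minus-part of $H^1_c(G_{K,S},\Q_p(1))$ surjects from its local summands in the $\varepsilon_1$-component. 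Granted the concentration in degree one, the composite in \eqref{Lemma(3)} is an isomorphism by a dimension count: both sides are free $\varepsilon_{1-j}\Q_p[G]$-modules of rank $[k:\Q]$, $\exp^{\ast}$ is an isomorphism at each $v\mid p$, and injectivity of the global-to-local map on the $\varepsilon_{1-j}$-part is dual to the surjectivity already used to kill $H^2$.

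For (1), when $j<0$ assertion (3) forces $\varepsilon_{1-j}\RG(G_{K,S},\Q_p(j))$ to be concentrated in degree one. Among the local factors, $v\nmid p$ yields $\RG(K_v,\Q_p(j))=0$ by the local Euler characteristic formula and vanishing of $H^0$ and $H^2$; for $v\mid p$ the complex is concentrated in degree one and $\exp^{\ast}$ provides $H^1(K_v,\Q_p(j))\cong K_v^{\ast}$. In the long exact sequence from \eqref{triangle}, the localization map on the $\varepsilon_{1-j}$-component is precisely the decomposition of \eqref{Lemma(3)} into its $p$-adic summands, hence an isomorphism, and all other cohomology groups vanish. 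Therefore $\varepsilon_{1-j}\Delta_{K,S}(j)\otimes\Q_p$ is acyclic.

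For (2), when $j=0$ the local factors at $v\nmid p$ now contribute $\varepsilon_1 H^0(K_v,\Q_p)=\varepsilon_1\bigoplus_{w\mid v}\Q_p$ in degree zero, while the $v\mid p$ terms additionally contribute a $\Q_p$-piece in degree zero alongside $K_v^{\ast}$ in degree one. Chasing the long exact sequence yields $\varepsilon_1 H^1(\Delta_{K,S}(0))\cong\varepsilon_1\bigoplus_{w\in S_{f,K}}\Q_p$ from the boundary map, and identifies $\varepsilon_1 H^2(\Delta_{K,S}(0))$ with the cokernel of the global-to-local restriction in degree one; a Dirichlet-type computation via the Bloch--Kato exact sequence for $\Q_p(1)$ and Kummer theory then pins this cokernel down to $\varepsilon_1(\Q_p\otimes_{\Z}\OO_{K,S}^{\times})^{\ast}$. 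The hardest step throughout is the $c$-equivariance analysis in the $H^2$-vanishing of (3)---in particular for $j=0$, where the Artin--Verdier dual forces one to control the minus part of $S$-units---once this is handled, parts (1) and (2) follow by standard diagram chasing.
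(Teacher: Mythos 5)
Your overall architecture (prove (3) first by Poitou--Tate, then read off (1) and (2) from the triangle \eqref{triangle}) is a legitimately different route from the paper, which instead computes $\Delta_{K,S}(j)$ directly via global duality, proves (1) with Soul\'e's vanishing plus Quillen--Lichtenbaum and the Borel regulator, proves (2) with Kummer theory, and only then deduces (3). For $j<0$ your argument is in substance sound and uses the same deep inputs (Soul\'e's rank computation and the $c$-action coming from the Borel regulator). The problem is the case $j=0$, which is exactly where the paper has to work hardest, and there your sketch has two genuine gaps. First, your dimension count rests on the assertion that ``$\exp^{\ast}$ is an isomorphism at each $v\mid p$''; this is false for $j=0$: for $w\mid p$ one has $\dim_{\Q_p}H^1(K_w,\Q_p)=[K_w:\Q_p]+1$ while $\dim_{\Q_p}K_w^{\ast}=[K_w:\Q_p]$, and $\exp_{\Q_p}^{\ast}$ is only surjective, with kernel the image of $H^0(K_v,\Q_p)$ under the reciprocity map \eqref{rec_v}. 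Consequently, injectivity of the global-to-local map plus local surjectivity does not make the composite \eqref{Lemma(3)} bijective at $j=0$; one must control how the image of $\varepsilon_1H^1(G_{K,S},\Q_p)$ meets that kernel, which is precisely what the paper's diagram \eqref{diagram2} does, using statement (2) and the fact that $(\oplus_w\ord_w)^{\ast}$ is an isomorphism on the minus part by Dirichlet's unit theorem. In your ordering, statement (2) is derived \emph{after} (3), so this input is not available to you without restructuring.

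Second, your vanishing of $\varepsilon_1H^2(G_{K,S},\Q_p)$ is not actually proved: the sentence about ``an Artin--Verdier duality analysis showing that the minus-part of $H^1_c(G_{K,S},\Q_p(1))$ surjects from its local summands'' merely restates the desired vanishing in dual form. The content needed here is nontrivial (it is equivalent to the vanishing of the minus part of the Leopoldt defect for the CM field $K$); the paper supplies it by showing the restriction map $H^2(G_{K^+,S},\Q_p)\to H^2(G_{K,S},\Q_p)$ is injective via cor$\circ$res$=2$ and an isomorphism by equality of the Leopoldt defects. Alternatively one can argue that $\Sha^1(\Q_p(1))$ lands inside $\Q_p\otimes\OO_K^{\times}$, whose minus part is torsion, but some such argument must be made explicit. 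Until these two points are repaired, your proofs of (3) at $j=0$, and hence of (2) (and of the $j=0$ contribution to the lemma as a whole), are incomplete, even though the $j<0$ half of your plan goes through.
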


\begin{proof}
First, we note that by the global duality (cf.~\cite[Proposition 5.4.3]{Nek06}), there is a canonical quasi-isomorphism 
\[
\Delta_{K, S} (j) \simeq \RHom_{\Z_p} (\RG (G_{K, S} , \Z_p (1-j)) , \Z_p)[-3] 
\]
for any integer $j$. Using this, we prove the Lemma. 

(1) Assume that $j$ is a negative integer. 
Then, Soul\'e proved that $H^2 (G_{K, S} , \Q_p (1-j))$ vanishes (cf. \cite[Theorem 10.3.27]{NSW08}). 

On the other hand, by the validity of the Quillen--Lichtenbaum Conjecture which is proved by Rost--Voevodsky and Weibel, 
we have a canonical isomorphism ($p$-adic Chern class map);
\[
H^1 (G_{K, S} , \Z_p (1-j)) \simeq \Z_p \otimes_{\Z} K_{1-2j} (\OO_{K, S}) 
\]
(cf. \cite[Theorem 3.2]{Nic22}), where we write $K_{1-2j}(-)$ for Quillen's higher algebraic $K$-theory functor. 
In addition, 
we have the Borel regulator isomorphism
\[
\R \otimes_\Z K_{1-2j} (\OO_{K, S}) \xrightarrow{\sim} \R \otimes_\Q H_K (-j)^+
\] 
(cf.~\cite[Part I \S 1]{Neu88}) for any negative integer $j$.  
From these, we know that 
\[
\C_p \otimes_{\Q_p} \varepsilon_{1-j} H^1 (G_{K, S} , \Q_p (1-j)) \simeq \C_p \otimes_{\Q} \varepsilon_{1-j}H_K (-j)^+ =0. 
\]
Since the cohomological $p$-dimension of $G_{K, S}$ is two, the claim is valid. 

(2) By Kummer theory and the invariant map of class field theory, 
we have a canonical isomorphism; 
\begin{equation}\label{Q_p(1)}
 H^i (G_{K,S} , \Q_p (1)) \simeq 
\begin{cases}
(\Q_p \otimes_{\Z} \OO_{K, S}^\times) & \text{if } i=1, 
\\
\ker (\bigoplus_{w \in S_{f, K}} \Q_p \xrightarrow{\Sigma} \Q_p) & \text{if } i=2, 
\\ 0 & \text{otherwise.}
\end{cases}
\end{equation}
The claim follows from this and the global duality. 

(3) If $j <0$, by the claim (1) and the exact triangle \eqref{triangle}, 
we have a quasi-isomorphism 
\[
\varepsilon_{1-j} \RG (G_{K, S} , \Q_p (j)) \simeq 
\varepsilon_{1-j} \bigoplus_{v \in S_f} \RG (K_v, \Q_p (j)) 
 \simeq 
 \varepsilon_{1-j} \bigoplus_{v \mid p} \RG (K_v, \Q_p (j)).
\]
In this case, the complex $\bigoplus_{v \mid p} \RG (K_v, \Q_p (j))$ is acyclic outside degree one and 
the dual exponential map is an isomorphism, the claim is valid. 

Next we consider the case when $j = 0$. 
Let $K^+$ be the maximal real subfield of $K$ and we consider the 
restriction map $H^2 (G_{K^+, S} , \Q_p) \to H^2 (G_{K, S} , \Q_p) $. 
Since a composition of the restriction map and the corestriction map is $2$ times, 
this restriction map is injective. 
Furthermore, in this case, 
it is well known that both cohomology groups have the same $\Q_p$-rank (this is known as the Leopoldt defect). 
Therefore, this restriction map is an isomorphism. 
From this, we know $\varepsilon_1 H^2 (G_{K, S} , \Q_p) =0$ and the complex $\varepsilon_1 \RG (G_{K, S}, \Q_p )$ is acyclic outside degree one. 

%Next, we check that the localization map $\varepsilon_{1} H^1(G_{K, S},  \Q_p) \to \varepsilon_{1} \bigoplus_{v \in S_f} H^1 (K_v, \Q_p )$ is injective. 
%Since $\varepsilon_{1} H^0(G_{K, S},  \Q_p) = \varepsilon_{1}  \Q_p = 0$, 
%the map $\varepsilon_{1}\bigoplus_{v \in S_f} H^0 (K_v, \Q_p ) \to \varepsilon_{1}H^1 (\Delta_{K, S} (0))$ induced by \eqref{triangle}
%is injective, thus this is an isomorphism by the claim (2). 
%From this, we obtain the injectivity of the above localization map. 
In addition, since $\varepsilon_{1} H^0(G_{K, S},  \Q_p) = \varepsilon_{1}  \Q_p = 0$, 
the map $\varepsilon_{1}\bigoplus_{v \in S_f} H^0 (K_v, \Q_p ) \to \varepsilon_{1}( \Q_p \otimes_{\Z_p} H^1 (\Delta_{K, S} (0)))$ induced by \eqref{triangle}
is injective, thus this is an isomorphism by the claim (2). 
Furthermore, we know that the localization map $\varepsilon_{1} H^1(G_{K, S},  \Q_p) \to \varepsilon_{1} \bigoplus_{v \in S_f} H^1 (K_v, \Q_p )$ is injective. 

Taking these into account, 
we obtain the commutative diagram: 
\begin{equation}\label{diagram2}
\xymatrix@R=30pt{
&
&
\varepsilon_{1} H^1(G_{K, S},  \Q_p) \ar[r]^-{\eqref{Lemma(3)}} \ar@{^(->}[d]
&
\varepsilon_{1} (\Q_p \otimes_{\Q} K)^\ast \ar@{=}[d]
\\
0 \ar[r]
&
%\displaystyle
\varepsilon_{1} \bigoplus_{v \in S_f} H^0 (K_v, \Q_p )
\ar[r]^-{\eqref{rec_v}} 
\ar[d]^-{\simeq}
&
\varepsilon_{1} \bigoplus_{v \in S_f} H^1 (K_v, \Q_p ) \ar[r]^-{\exp_{\Q_p}^\ast} \ar@{->>}[d]
&
\varepsilon_{1} (\Q_p \otimes_{\Q} K)^\ast \ar[r]
&
0
\\
&
\Q_p \otimes_{\Z_p} \varepsilon_1 H^1 ( \Delta_{K, S} (0)) \ar[d]^-{\simeq}
&
\Q_p \otimes_{\Z_p} \varepsilon_1 H^2 ( \Delta_{K, S} (0)) \ar[d]^-{\simeq}
\\
&
%\displaystyle
\varepsilon_{1} \bigoplus_{w \in S_{f, K}} \Q_p \ar[r]^-{(\oplus_w \ord_{w})^\ast}_-{\simeq}
&
\varepsilon_1 (\Q_p \otimes_{\Z} \OO_{K, S}^\times)^\ast, 
&
}
\end{equation}
%\MEMO{本当は、これが可換になるようにclaim (2)の同型を定めるべし}
where the upper and middle vertical arrows are induced by \eqref{triangle} and 
the bottom vertical isomorphisms are the canonical one in claim (2). 
Moreover, the left bottom horizontal arrow is induced by the normalized valuation map and 
this is an isomorphism by Dirichlet's unit theorem. 
%Clearly, the upper horizontal arrow in the diagram coincides with the composite map in the claim (3) when $j=0$. 
By the snake lemma, we know that \eqref{Lemma(3)} is also an isomorphism when $j=0$.   
\end{proof}

We define a period regulator isomorphism as in \cite{BKS20}. 

\begin{defn}\label{def:period_reg>0}
For any $j \in \Z_{\geq 1}$, we define an isomorphism 
\[
\lambda_{K}^{j} : \C_p \otimes_{\Q_p} \varepsilon_{j}H^1 (G_{K, S} , \Q_p (1-j)) \xrightarrow{\sim} 
\C_p \otimes_{\Z_p} \varepsilon_{j}X_K (-j)^+
\]
by the following composite map 
\begin{align*}
&\C_p \otimes_{\Q_p} \varepsilon_{j} H^1 (G_{K, S} , \Q_p (1-j)) 
%&\to
%\C_p \otimes_{\Q_p} \varepsilon_{1-j} \bigoplus_{v \mid p} H^1 (K_v, \Q_p (j)) 
%\xrightarrow{\exp_{\Q_p (j)}^\ast} 
\overset{\eqref{Lemma(3)}}{\xrightarrow{\sim}}
\C_p \otimes_{\Q_p} \varepsilon_{j} (\Q_p \otimes_{\Q} K)^\ast 
\\
&\overset{(\gamma_K^j)^{\ast, -1}}{\xrightarrow{\sim}}
\C_p \otimes_{\Z_p} \varepsilon_{j} \Hom_{\Z_p} (X_K (j)^+ , \Z_p) 
\overset{\eqref{canonical_id}}{\simeq}
\C_p \otimes_{\Z_p} \varepsilon_{j} X_K (-j)^+, 
\end{align*}
where the second isomorphism is induced by $\gamma_K^j$ defined in \eqref{embedding} 
and the last one is induced by the canonical identification 
\begin{equation}\label{canonical_id}
X_K (-j) \simeq \Hom_{\Z_p} (X_K (j) , \Z_p) .
\end{equation} 
Moreover, we set 
\[
\Xi_{K/k, S} (j) := \Det_{\Z_p[G]}^{-1} (\RG (G_{K, S} , \Z_p (1-j))) \otimes_{\Z_p [G]} 
\Det_{\Z_p[G]}^{-1} (X_K (-j)^+)  
\]
and define an isomorphism 
\[
\vartheta_{K/k, S}^{j} : 
\C_p \otimes_{\Z_p} \varepsilon_{j} \Xi_{K/k, S} (j)
 \xrightarrow{\sim} 
\varepsilon_{j} \C_p [G]
\]
by the following composite map 
\begin{eqnarray*}
&\vartheta_{K/k, S}^j : 
\C_p \otimes_{\Z_p} \varepsilon_{j} \parenth{\Det_{\Z_p[G]}^{-1} (\RG (G_{K, S} , \Z_p (1-j))) \otimes_{\Z_p [G]} 
\Det_{\Z_p[G]}^{-1} (X_K (-j)^+)} 
\\
& \xrightarrow{\sim} 
\C_p \otimes_{\Z_p}\varepsilon_{j} \parenth{\Det_{\Q_p[G]} (H^1 (G_{K, S} , \Q_p (1-j))) \otimes_{\Z_p [G]} 
\Det_{\Z_p[G]}^{-1} (X_K (-j)^+)} 
\\
& \overset{\lambda_{K}^j}{\xrightarrow{\sim}} 
\C_p \otimes_{\Z_p} \varepsilon_{j} \parenth{\Det_{\Z_p[G]} (X_K (-j)^+) \otimes_{\Z_p[G]} \Det_{\Z_p[G]}^{-1} (X_K (-j)^+) }
\overset{\ev}{\xrightarrow{\sim}}
\varepsilon_{j}\C_p [G],   
\end{eqnarray*}
where the first isomorphism is the canonical one induced by the fact that $\varepsilon_{j} \RG (G_{K, S} , \Q_p (1-j))$ is 
acyclic outside degree one (see Lemma \ref{lem:complex} (3)). 
\end{defn}

Similarly, we define an isomorphism $\vartheta_{K/k, S}^{j} $ when $j \leq 0$ as follows. 

\begin{defn}\label{def:period_reg<0}

For any $j \in \Z_{\leq 0}$, we define an isomorphism
\[
\vartheta_{K/k, S}^{j} : 
\C_p \otimes_{\Z_p} \varepsilon_{1-j} \Det_{\Z_p[G]}^{-1} (\Delta_{K,S} (j)) 
 \xrightarrow{\sim} 
\C_p [G]
\]
as follows. 

When $j <0$, the complex $\Q_p \otimesL_{\Z_p} \varepsilon_{1-j}\Delta_{K,S} (j) $ is acyclic by Lemma \ref{lem:complex} (1). 
In this case, the isomorphism $\vartheta_{K/k, S}^{j}$ is defined by the canonical one. 

When $j =0$, we define $\vartheta_{K/k, S}^{0}$ by the following composite map 
\begin{align*}
&\C_p \otimes \varepsilon_1 \Det_{\Z_p[G]}^{-1} (\Delta_{K,S} (0)) 
 \xrightarrow{\sim}
\C_p \otimes_{\Q_p} \varepsilon_1 
\parenth{
\Det_{\Q_p[G]}^{-1} ((\Q_p \otimes_{\Z} \OO_{K, S}^\times)^\ast)}
\otimes
\Det_{\Q_p[G]} (\bigoplus_{w \in S_{f, K}} \Q_p)
\\
& \simeq  
\C_p \otimes_{\Q_p} \varepsilon_1 
\parenth{
\Det_{\Q_p[G]}^{-1} (\bigoplus_{w \in S_{f, K}} \Q_p) \otimes
\Det_{\Q_p[G]} (\bigoplus_{w \in S_{f, K}} \Q_p)}
 \overset{\ev}{\xrightarrow{\sim}} \varepsilon_1\C_p[G],  
 \end{align*}
where the first isomorphism is induced by Lemma \ref{lem:complex} (2) and 
the second one is induced by the following isomorphism 
\[
(\oplus_{w \in S_{f, K}} \ord_{w})^\ast : 
 \varepsilon_1 \bigoplus_{w \in S_{f, K}} \Q_p \xrightarrow{\sim} 
  \varepsilon_1 (\Q_p \otimes_{\Z} \OO_{K, S}^\times)^\ast. 
\]
\end{defn}

%%%%%%%%%%%%%%%%%%%%%%%%%%%%%%%%%%%%%%%%
\subsection{Compatibility of the isomorphisms $\vartheta_{K/k, S}^j$ and $ \Phi_{K/k, S}^{\loc, j} $}
\label{compatibility}
%%%%%%%%%%%%%%%%%%%%%%%%%%%%%%%%%%%%%%%%
In this subsection, we review the compatibility of the isomorphisms 
which are defined in Definition \ref{def:vartheta_loc}, \ref{def:period_reg>0}, and \ref{def:period_reg<0}. 
 
\begin{defn}\label{defn:AV}
For any $j \in \Z_{\leq 0}$, 
we define an isomorphism 
\[
\theta^{j}_{K, S} : 
\varepsilon_{1-j}\Xi_{K/k, S} (1-j)
\xrightarrow{\sim}
\varepsilon_{1-j} \Det_{\Z_p[G]}^{-1} (\Delta_{K, S} (j)) 
\otimes_{\varepsilon_{1-j}\Z_p[G]} 
\varepsilon_{1-j} \Xi_{K/k, S}^{\loc} (j) 
\]
by the following composite map
\begin{align*}
&\theta^{j}_{K, S} : 
\varepsilon_{1-j}\Xi_{K/k, S} (1-j)
:= 
\varepsilon_{1-j} \parenth{
\Det_{\Z_p[G]}^{-1} (\RG (G_{K, S} , \Z_p (j))) \otimes_{\Z_p [G]} 
\Det_{\Z_p[G]}^{-1} (X_K (j-1)^+) }
\\
&\overset{\eqref{triangle}}{\xrightarrow{\sim}}
%\varepsilon_{1-j} \parenth{
%\Det_{\Z_p[G]}^{-1} \Big{(}\bigoplus_{v \in S_f}\RG(K_v, \Z_p (j))\Big{)}
%\otimes 
%\Det_{\Z_p[G]}^{-1} (\Delta_{K, S} (j)) 
%\otimes 
%\Det_{\Z_p[G]}^{-1} (X_K (j-1)^+) 
%}
%\\
%&\overset{\psi}{\xrightarrow{\sim}}
\varepsilon_{1-j} \parenth{
\Det_{\Z_p[G]}^{-1} (\Delta_{K, S} (j)) 
\otimes 
\Big{(}\bigotimes_{v \in S_f}
\Det_{\Z_p[G]}^{-1} (\RG(K_v, \Z_p (j)) )\Big{)}
\otimes 
\Det_{\Z_p[G]}^{-1} (X_K (j-1)^+) 
}
\\
&\overset{\eqref{1/2}}{\xrightarrow{\sim}}
\varepsilon_{1-j} \parenth{
\Det_{\Z_p[G]}^{-1} (\Delta_{K, S} (j)) 
\otimes \Big{(}\bigoplus_{v \in S_f}
\Det_{\Z_p[G]}^{-1} (\RG(K_v, \Z_p (j)))\Big{)}
\otimes 
\Det_{\Z_p[G]}^{-1} (X_K (j)) 
}
\\
&=
\varepsilon_{1-j} \Det_{\Z_p[G]}^{-1} (\Delta_{K, S} (j)) 
\otimes_{\varepsilon_{1-j}\Z_p[G]} 
\varepsilon_{1-j} \Xi_{K/k, S}^{\loc} (j),
\end{align*}
%\MEMO{$\psi$ の表記を全て消す}
where the second isomorphism is induced by 
\begin{equation}\label{1/2} 
\varepsilon_{1-j} X_K (j-1)^+ \overset{\eqref{canonical_id}}{\simeq}
 \varepsilon_{1-j} (X_K (1-j)^\ast) 
\overset{(\beta_K^{1-j})^{\ast}}{\xrightarrow{\sim}}
 \varepsilon_{1-j} (X_K (1-j)^{+, \ast})  
= \varepsilon_{1-j} (X_K (1-j)^\ast)
\overset{\eqref{Xpair}}{\simeq}
\varepsilon_{1-j}X_K (j). 
\end{equation}
Here we write $(-)^\ast := \Hom_{\Z_p} (- , \Z_p) $. 
\end{defn}

For any subfield $k \subset M \subset K$ and any $j \in \Z_{\geq 1}$,
there are natural identifications 
\[
\Z_p [\Gal (M/k)] \otimesL_{\Z_p[G]} \RG (G_{K, S} , \Z_p (j)) = \RG (G_{M, S} , \Z_p (j))
\]
and $\Z_p [\Gal (M/k)] \otimes_{\Z_p[G]} X_K (j) = X_M (j)$. 
These identifictions induce a canonical transition map 
$\Xi_{K/k , S} (j) \twoheadrightarrow  \Xi_{M/k , S} (j)$. 
Similarly, we obtain canonical transition maps 
$\Det_{\Z_p[G]}^{-1} (\Delta_{K, S} (j)) \twoheadrightarrow \Det_{\Z_p[\Gal (M / k)]}^{-1} (\Delta_{M, S} (j)) $ and 
$\Xi_{K/k, S}^{\loc} (j) \twoheadrightarrow \Xi_{M/k, S}^{\loc} (j)$. 

\begin{lem}\label{lem:theta_func}
Let $M$ be a CM-field such that $k \subset M \subset K$. 
Then, there is the following commutative diagram 
\[\xymatrix{
\varepsilon_{1-j} \Xi_{K/k, S} (1-j)
\ar[r]_-{\sim}^-{\theta^{j}_{K, S} }
\ar@{->>}[d]
&
\varepsilon_{1-j} \Det_{\Z_p[G]}^{-1} (\Delta_{K, S} (j)) 
\otimes_{\varepsilon_{1-j}\Z_p[G]} 
\varepsilon_{1-j}\Xi_{K/k, S}^{\loc} (j) 
\ar@{->>}[d]
\\
\varepsilon_{1-j}\Xi_{M/k, S} (1-j)
\ar[r]_-{\sim}^-{\theta^{j}_{M, S} }
&
\varepsilon_{1-j} \Det_{\Z_p[\Gal (M/k)]}^{-1} (\Delta_{M, S} (j)) 
\otimes_{\varepsilon_{1-j}\Z_p[\Gal (M/k)]} 
\varepsilon_{1-j} \Xi_{M/k, S}^{\loc} (j)  
}
\]
for any $j \in \Z_{\leq 0}$, 
where both vertical arrows are induced by the canonical transition maps. 
\end{lem}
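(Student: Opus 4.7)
The plan is to unravel the definition of $\theta^{j}_{K,S}$ into its two constituent isomorphisms, namely the one induced by the exact triangle \eqref{triangle} and the reindexing isomorphism \eqref{1/2}, and to verify that each is compatible with the transition maps. Since $\theta^{j}_{K,S}$ is built by tensoring these two isomorphisms (and $\theta^{j}_{M,S}$ in exactly the same way), the commutativity of the diagram then follows formally by tensoring the two separate compatibilities.

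For the first piece, I would use that the exact triangle \eqref{triangle} is functorial under the base change $\Z_p[\Gal(M/k)] \otimesL_{\Z_p[G]} (-)$: one has canonical identifications
\[
\Z_p[\Gal(M/k)] \otimesL_{\Z_p[G]} \RG(G_{K,S}, \Z_p(j)) \simeq \RG(G_{M,S}, \Z_p(j))
\]
and an analogous identification for each local complex $\RG(K_v, \Z_p(j))$ (via Shapiro's lemma), so that the triangle for $K$ descends to the triangle for $M$. By property (2) listed at the end of \S\ref{App:det}, the determinant functor commutes with base change, so the induced isomorphism coming from property (1) of the same list is natural in the required sense.

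For the second piece, each of the three maps composing \eqref{1/2}, namely the identification \eqref{canonical_id}, the dual $(\beta_K^{1-j})^{\ast}$, and the pairing \eqref{Xpair}, is described componentwise on the fixed $\Z_p$-bases $\{e_{\iota}^{j}\}_{\iota \in \Sigma_K}$ introduced in \S\ref{ss:notation}. The transition map $X_K(\bullet) \twoheadrightarrow X_M(\bullet)$ is precisely the one induced by the restriction surjection $\Sigma_K \twoheadrightarrow \Sigma_M$, so each of these three maps is manifestly compatible with passing from $K$ to $M$, and hence so is \eqref{1/2}. Tensoring this with the compatibility of the triangle isomorphism, and noting that the $\varepsilon_{1-j}$-idempotent is defined through the complex conjugation which is common to $G$ and $\Gal(M/k)$, yields the desired commutative square.

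The only obstacle worth flagging is the bookkeeping for the many tensor factors and for the signs arising from the commutativity constraint $\psi$ on $\mathcal{P}_R$; but since $\theta^{j}_{K,S}$ and $\theta^{j}_{M,S}$ are assembled by rearranging the same factors in the same order, these signs appear identically on the two sides of the diagram and therefore cancel. In particular, no genuinely new calculation is needed: the lemma is essentially a naturality statement for determinants of exact triangles, combined with the fact that $\beta_K^{j}$ is defined embedding-by-embedding.
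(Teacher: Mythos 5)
Your proposal is correct and follows essentially the same route as the paper, whose proof simply observes that the two ingredients of $\theta^{j}_{K,S}$ --- the isomorphism induced by the exact triangle \eqref{triangle} and the reindexing map \eqref{1/2} --- behave functorially under the change of fields from $K$ to $M$. Your more detailed verification (base change of the global and semi-local complexes, componentwise compatibility of \eqref{canonical_id}, $(\beta_K^{1-j})^{\ast}$ and \eqref{Xpair} on the fixed bases) just spells out that same functoriality.
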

\begin{proof}
In Definition \ref{defn:AV}, 
we define the isomorphism $\theta_{K, S}^j$ using 
the exact triangle \eqref{triangle} and the isomorphism \eqref{1/2}. 
Clearly, these behave functorially with respect to change of fields. 
Thus, we know the commutativity of this diagram. 
\end{proof}

The following theorem is a special case of \cite[\S 5.1, Lem. 17]{BuFl01}. 

\begin{thm}\label{thm: comm of period reg}
For any $j \in \Z_{\leq 0}$, the following diagram is commutative: 
% up to sign $(-1)^{[k:\Q]}$; 
\[
\xymatrix{
\C_p \otimes_{\Z_p} \varepsilon_{1-j} \Xi_{K/k, S} (1-j)  \ar[r]^-{\theta_{K, S}^{ j}}_-{\simeq}
\ar[d]^-{\vartheta_{K/k, S}^{1-j}}_-{\simeq}
&\C_p\otimes_{\Z_p} \varepsilon_{1-j} \Det_{\Z_p[G]}^{-1} (\Delta_{K, S} (j)) \otimes_{\varepsilon_{1-j}\Z_p[G]} 
\varepsilon_{1-j} \Xi_{K/k, S}^{\loc} (j) 
\ar[d]^-{\vartheta_{K/k, S}^{j} \otimes \Phi_{K / k, S}^{\loc, j}}_-{\simeq} \\
\varepsilon_{1-j}\C_p[G] & \varepsilon_{1-j} ( \C_p[G] \otimes_{\C_p[G]} \C_p[G]) 
\ar[l]_-{\simeq}^-{ab \mapsfrom a \otimes b}. 
}
\]
\end{thm}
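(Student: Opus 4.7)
The plan is to verify commutativity by decomposing both composites through local--global cohomological data and reducing to the local--global compatibility result \cite[\S 5.1, Lem.~17]{BuFl01}. Along the top-left path, $\vartheta_{K/k,S}^{1-j}$ uses the fact that $\varepsilon_{1-j}\RG(G_{K,S},\Q_p(j))$ is concentrated in degree one (Lemma~\ref{lem:complex}(3)) and invokes the period regulator $\lambda_K^{1-j}$, which routes $H^1$ through the localization at $p$-adic primes, the Bloch--Kato dual exponential $\exp^\ast_{\Q_p(j)}$ into $(\Q_p\otimes K)^\ast$, and then $(\gamma_K^{1-j})^{\ast,-1}$ followed by \eqref{canonical_id} into $X_K(j-1)^+$. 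Along the bottom-right path, $\theta_{K,S}^j$ unpacks the global complex via the localization triangle \eqref{triangle} and replaces $X_K(j-1)^+$ by $X_K(j)$ via \eqref{1/2}, after which $\vartheta_{K/k,S}^j \otimes \Phi^{\loc, j}_{K/k,S}$ combines local dual exponentials with $(\alpha_K^{1-j})^{\ast,-1} = (\gamma_K^{1-j})^{\ast,-1}\circ(\beta_K^{1-j})^{\ast,-1}$. The factor $(\beta_K^{1-j})^{\ast}$ appearing in \eqref{1/2} is precisely designed to cancel the $(\beta_K^{1-j})^{\ast,-1}$ hidden inside $(\alpha_K^{1-j})^{\ast,-1}$, so that the remaining comparison is between the period regulator and the local dual exponential factored through the localization map.

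I would treat the cases $j < 0$ and $j = 0$ separately. When $j < 0$, Lemma~\ref{lem:complex}(1) shows that $\Q_p\otimesL \varepsilon_{1-j}\Delta_{K,S}(j)$ is acyclic, and hence $\vartheta_{K/k,S}^j$ is the canonical trivialization. The triangle \eqref{triangle} then furnishes an identification
\[
\varepsilon_{1-j}\RG(G_{K,S},\Q_p(j))\simeq \varepsilon_{1-j}\bigoplus_{v\mid p}\RG(K_v,\Q_p(j)),
\]
since for $v\nmid p$ the local complexes are acyclic, and under this identification $\Phi^{\loc,j}_{K/k,S}$ acts exactly as a sum of local dual exponentials composed with $(\alpha_K^{1-j})^{\ast,-1}$ and the evaluation pairing. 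Diagram chasing reduces commutativity to the naturality of the evaluation map applied to the decomposition $\alpha_K^{1-j}=\beta_K^{1-j}\circ\gamma_K^{1-j}$.

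When $j = 0$, the complex $\varepsilon_1\Delta_{K,S}(0)$ has non-trivial cohomology in degrees one and two, and $\vartheta_{K/k,S}^0$ picks up the valuation--unit data. The key computational tool is the commutative diagram \eqref{diagram2}, which aligns the Bloch--Kato dual exponential on $\varepsilon_1 H^1(G_{K,S},\Q_p)$ with the local reciprocity map and the valuation isomorphism $\varepsilon_1\bigoplus_{w\in S_{f,K}}\Q_p\simeq \varepsilon_1(\Q_p\otimes\OO_{K,S}^\times)^\ast$. Applying Lemma~\ref{lem:nine_term} to the resulting nine-term diagram of determinant modules reduces commutativity to the naturality of the evaluation pairings, together with the snake-lemma identification already extracted in the proof of Lemma~\ref{lem:complex}(3).

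The hard part will be the meticulous bookkeeping of signs and dualities: the twist \eqref{1/2} carries both $(\beta_K^{1-j})^\ast$ and the $\#$-involution implicit in \eqref{homRRG}, Remark~\ref{rem:sign} records a $(-1)^{[k:\Q]}$ discrepancy between $\Phi^{\loc,j}_{K/k,S}$ and the isomorphism of \cite{ADK}, and the commutativity constraint $\psi$ of $\mathcal{P}_R$ introduces further graded signs. Reconciling these conventions with those of \cite[\S 5.1, Lem.~17]{BuFl01} is the bulk of the genuine work; once done, our statement follows as a special case of that lemma.
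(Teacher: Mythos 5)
Your proposal follows essentially the same route as the paper's own proof: it splits into the cases $j<0$ (where acyclicity from Lemma~\ref{lem:complex} makes the cohomological square commute by definition) and $j=0$ (where commutativity is extracted from the diagram \eqref{diagram2} via Lemma~\ref{lem:nine_term}), and then handles the $X_K$-side by the cancellation of $(\beta_K^{1-j})^{\ast}$ in \eqref{1/2} against the factorization $\alpha_K^{1-j}=\beta_K^{1-j}\circ\gamma_K^{1-j}$, exactly as in the paper. The only superfluous point is your appeal to Remark~\ref{rem:sign} and to reconciling conventions with \cite[\S 5.1, Lem.~17]{BuFl01}: the paper's argument is self-contained and no such sign discrepancy enters, since $\Phi_{K/k,S}^{\loc,j}$ is used directly as defined in Definition~\ref{def:vartheta_loc}.
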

\begin{proof}
For simplicity, we write $\varepsilon:= \varepsilon_{1-j}$. 
%$\RG^{\loc} (\Z_p (j)) := \bigoplus_{v \in S_f} \RG (K_v , \Z_p (j))$, $\varepsilon:= \varepsilon_{1-j}$, 
%$\Det_{R} (-) := [-]_R$ and  $\Det_{R} (-) := [-]_R^{-1}$. 
%Then, 
We consider the following commutative diagram:  
\[
\xymatrix{
 \varepsilon \Det_{\Q_p[G]}^{-1} (\RG (G_{K, S} , \Q_p(j)))
\ar[r]^-{\eqref{triangle}}_-{\simeq}
\ar[d]_-{\simeq}
&
\varepsilon
\Big{(}
\displaystyle
\Det_{\Q_p[G]}^{-1} (\Q_p \otimesL_{\Z_p} \Delta_{K, S} (j))
\otimes
\bigotimes_{v \in S_f} \Det_{\Q_p[G]}^{-1}(\RG(K_v, \Q_p (j)))
\ar[d]^-{\vartheta_{K/k, S}^{j} \otimes \eqref{dual_version}}_-{\simeq}
\Big{)}
\\
 \varepsilon \Det_{\Q_p[G]}(H^1 (G_{K, S} , \Q_p(j)))
\ar[r]^-{\eqref{Lemma(3)}}_-{\simeq}
&
\varepsilon
\Det_{\Q_p[G]}((\Q_p \otimes_\Q K)^\ast). 
}
\]
When $j <0$, the commutativity of this diagram is clear by the definitions of the maps in the diagram. 
%of $\vartheta_{K/k, S}^{j}, \vartheta_{K/k, S}^{1-j}$ and $ \Phi_{K/k, S}^{\loc, j} $. 
When $j =0$, this commutativity is obtained by applying Lemma \ref{lem:nine_term} for the diagram \eqref{diagram2}. 
%Here we note that since the grade of $\varepsilon_{1-j}[\Delta_{K, S} (j)]^{-1}_{\Z_p[G]}$ is zero, we have not to care an effect $\psi$ on signs. 

%We write $\Q_p \cdot (-) :=\Q_p \otimes_{\Z_p} (-)$ and $[-]:= [-]_{\Q_p[G]}$ for simplicity. 
On the other hand, we also have the following commutative diagram:

{\footnotesize
\[
\xymatrix@C=30pt{
\C_p \otimes
\varepsilon
\parenth{
\D_{\Q_p[G]}((\Q_p \otimes_\Q K)^\ast)
\otimes
\D_{\Z_p[G]}^{-1}(X_K(j-1)^+)
}
\ar[r]_-{\simeq}^-{\id \otimes \eqref{1/2}}
\ar[d]^-{(\gamma_K^{1-j})^{\ast, -1} \otimes \id}_-{\simeq}
&
\C_p \otimes
\varepsilon
\parenth{
\D_{\Q_p[G]}((\Q_p \otimes_\Q K)^\ast)
\otimes
\D_{\Z_p[G]}^{-1}(X_K(j))
}
\ar[d]^-{(\alpha_K^{1-j})^{\ast, -1} \otimes \eqref{Xpair}}_-{\simeq}
\\
\C_p \otimes
\varepsilon
\parenth{
\D_{\Z_p[G]}(X_K(1-j)^{\ast})
\otimes
\D_{\Z_p[G]}^{-1}(X_K(j-1)^+)
}
\ar[r]^-{\id \otimes \eqref{canonical_id}}_-{\simeq}
\ar[d]^-{\eqref{canonical_id} \otimes \id}_-{\simeq}
&
\C_p \otimes
\varepsilon
\parenth{
\D_{\Z_p[G]}(X_K(1-j)^\ast)
\otimes
\D_{\Z_p[G]}^{-1}(X_K(1-j)^\ast)
}
\ar[d]^-{\ev}_-{\simeq}
\\
\C_p \otimes
\varepsilon
\parenth{
\D_{\Z_p[G]}(X_K(j-1)^+)
\otimes
\D_{\Z_p[G]}^{-1}(X_K(j-1)^+)
}
\ar[r]^-{\ev}_-{\simeq}
&
\varepsilon
\C_p[G], 
}
\]}
where we write $\D_R(-):= \Det_R(-)$ and 
we identify $\varepsilon_{1-j} (X_K (1-j)^{+, \ast}) = \varepsilon_{1-j} (X_K (1-j)^{\ast})$. 
The commutativity of the upper square follows from the definition of \eqref{1/2} and $\alpha_{K}^{1-j} = \beta_K^{1-j} \circ \gamma_K^{1-j}$. 
The commutativity of the bottom square is clear. 

%From these two diagram and the definition of the map $\vartheta_{K/k, S}^{1-j},  \Phi_{K/k, S}^{¥loc, j} $ and $\theta_{K, S}^j$, 
By Definition \ref{def:period_reg>0}, we know that $\vartheta_{K/k, S}^{1-j}$ coincides with the map which is obtained by combining 
the left and bottom arrows in the first diagram with the all left vertical arrows and the bottom horizontal arrow in the second diagram. 
On the other hand, by the Definition \ref{def:vartheta_loc}, \ref{def:period_reg<0} and \ref{defn:AV}, 
the composite map $(\vartheta_{K/k, S}^{j} \otimes \Phi_{K/k, S}^{\loc, j} ) \circ \theta_{K, S}^j$ coincides with the map which is obtained by combining 
the upper and right arrows in the first diagram with the all right vertical arrows in the second diagram. 
Therefore, we obtain the commutativity of the diagram in Theorem \ref{thm: comm of period reg}. 
\end{proof}

%%%%%%%%%%%%%%%%%%%%%%%%%%%%%%%%%%%%%%%%%%%%%%%
\section{Iwasawa theory over a totally real field}\label{s:zeta<1}
%%%%%%%%%%%%%%%%%%%%%%%%%%%%%%%%%%%%%%%%%%%%%%%
In this section, we study classical Iwasawa theory over a totally real field. 
A main result in this section is Theorem \ref{thm:zeta<1}. 
This theorem combines the Iwasawa main conjecture and the minus component of the equivariant Tamagawa number conjecture proved by Dasgupta--Kakde--Silliman \cite{DKS}. 

%%%%%%%%%%%%%%%%%%%%%%%%%%%%%%%%%%%%%%%%%%%%%%%
\subsection{The $L$-functions}\label{ss:L-function}
%%%%%%%%%%%%%%%%%%%%%%%%%%%%%%%%%%%%%%%%%%%%%%%

We introduce $L$-functions. 

Let $K/k$ be a finite abelian CM-extension and put $G:= \Gal (K/k)$. 
For a finite prime $v$ of $k$, we define $N(v)$ as the cardinality of the residue field of $k$ at $v$.
For any $\C$-valued character $\chi$ of $G$ and any finite set $S$ of places of $k$, 
we define the $L$-function $L_S(\chi, s)$ by
\[
L_S(\chi, s) 
= \prod_{\overset{v \nmid \ff_{\chi}}{v \notin S}} \parenth{1 - \chi(v)N(v)^{-s}}^{-1},
\]
where $v$ runs over the finite primes of $k$ 
that neither divide the conductor $\ff_{\chi}$ of $\chi$ nor are contained in $S$ and 
$\chi (v)$ is the image of the {\it arithmetic} Frobenius at $v$ under $\chi$. 
It is well-known that this infinite product converges absolutely for 
$s \in \C$ whose real part is larger than $1$, 
and has a meromorphic continuation to the whole complex plane $s \in \C$. 
When $S$ is the empty set, we write $L(\chi, s):= L_{\emptyset}(\chi, s)$. 

The equivariant $L$-function is defined by 
\[
\Theta_{K/k, S} (s) := \sum_{\chi} L_S(\chi, s) e_\chi ,
%\in \C[\GG_L] (\simeq \C_p [\GG_L]), 
\]
where $\chi$ runs over all characters of $G$ and $e_\chi := \frac{1}{\# G}\sum_{\sigma \in G} \chi (\sigma) \sigma^{-1}$ is the associated idempotent. 
This is a $\C[G]$-valued function. If $S$ is empty, we write $\Theta_{K/k} (s) := \Theta_{K/k, \emptyset} (s)$ for simplicity. 

\begin{rem}\label{rem:L-value}
Let $c \in G$ be the complex conjugation. 
For any integer $j$, we set an idempotent 
\[
\varepsilon_j:= \frac{1+(-1)^jc}{2} \in \Q[G]
\] as in the previous section. 
We note that $L_S (\chi, s)$ has a pole only if $\chi$ is a trivial character $\trivial$, in which case $L_S ({\trivial}, s)$ has a simple pole at $s=1$. 
Therefore, we can always define $\Theta_{K/k, S} (j) \in \C[G]$ for any $j \in \Z \setminus \{ 1 \}$ and 
$
\varepsilon_1 \Theta_{K/k, S} (1) = \sum_{\chi (c) =-1} L_S(\chi, s) e_\chi \in \C [G]. 
$ 
Then, we have 
\[
\varepsilon_j \Theta_{K/k, S} (j) \in (\varepsilon_j \C[G])^\times
\]
for any $j \in \Z_{\geq 1}$. 
By this fact and the functional equation, it also follows that 
\[
\varepsilon_{1-j} \Theta_{K/k, S} (j) \in (\varepsilon_{1-j} \C[G])^\times
\]
for any integer $j< 0$ and $\varepsilon_{1} \Theta_{K/k} (0) \in (\varepsilon_{1} \C[G])^\times$. 
\end{rem}

We write $L_S^\ast(\chi, 0) \in \C^\times $ 
for the leading term of the Taylor expansion of  $L_S (\chi, s)$ at $s=0$. 
We also define the leading term of the equivariant $L$-function at $s=0$ by  
\[
\Theta_{K/k, S}^\ast (0) := \sum_{\chi} L_S^\ast(\chi, 0) e_\chi 
\in \C[G]^{\times}.
\]

%%%%%%%%%%%%%%%%%%%%%%%%%%%%%%%%%%%%%%%%%%%%%%%%%%%%%%%
\subsection{The minus component of the equivariant Tamagawa number conjecture}\label{ss:ETNCminus}
%%%%%%%%%%%%%%%%%%%%%%%%%%%%%%%%%%%%%%%%%%%%%%%%%%%%%%%

In this subsection, we review 
the minus component of the equivariant Tamagawa number conjecture for $\mathbb{G}_m$ (ETNC$^-$) which 
is proved by Dasgupta--Kakde--Silliman \cite{DKS}. 

We fix an odd prime number $p$ and also fix an isomorphism $\C\simeq \C_p$ and identify them as in the previous section. 
For simplicity, we write $\varepsilon^- := \varepsilon_1 = \frac{1-c}{2} \in \Z_p[G]$. 
We set $\Z_p[G]^- := \varepsilon^-\Z_p [G]$ and 
$M^- := \varepsilon^- M$ for any $\Z_p[G]$-module $M$. 

Let $S$ be a finite set of places of $k$ which contains $S_\infty \cup S_p \cup S_{\ram} (K/k)$
as in the Introduction and set $S_f := S \setminus S_\infty$. 

We define an isomorphism 
\[
\widetilde{\vartheta_{K/k, S}^{0}} : \C_p \otimes_{\Z_p}\Det_{\Z_p[G]}^{-1} (\RG(G_{K, S} , \Z_p (1)))^- \xrightarrow{\sim} \C_p[G]^-
\]
by the following composite map 
\begin{align*}
&\widetilde{\vartheta_{K/k, S}^{0}} : \C_p \otimes_{\Z_p}\Det_{\Z_p[G]}^{-1} (\RG(G_{K, S} , \Z_p (1)))^-
\\
&\overset{\eqref{Q_p(1)}}{\xrightarrow{\sim}} 
\C_p \otimes_{\Q_p} \Det_{\Q_p[G]} (\Q_p \otimes_{\Z} \OO_{K, S}^\times)^{-}  
\otimes_{\Q_p[G]^-} \Det_{\Q_p[G]}^{-1} (\bigoplus_{w \in S_{f,K}} \Q_p)^{-} 
\\
&\overset{\reg_S}{\xrightarrow{\sim}}
\C_p \otimes_{\Q_p} \Det_{\Q_p[G]} (\bigoplus_{w \in S_{f,K}} \Q_p)^{-}  
\otimes_{\Q_p[G]^-} \Det_{\Q_p[G]}^{-1} (\bigoplus_{w \in S_{f,K}} \Q_p)^{-} 
\overset{\ev}{\xrightarrow{\sim}} \C_p[G]^-, 
\end{align*}
where the second isomorphism is induced by the minus-component of the Dirichlet regulator isomorphism: 
\[
\reg_S : (\C_p \otimes_{\Z} \OO_{K, S}^\times)^- \xrightarrow{\sim} ( \bigoplus_{w \in S_{f,K}} \C_p )^-
\; ; \; x \mapsto (-\log |x|_w)_{w \in S_{f, K}}. 
\]

Then, the ETNC$^-$ proved by Dasgupta--Kakde--Silliman \cite{DKS} is the following. 
\begin{thm}[Theorem 1 in \cite{DKS}]\label{thm:ETNC-}
There is a (unique) $\Z_p[G]^-$-basis 
\[
\widetilde{z^{0,-}_{K/k, S}} \in \Det_{\Z_p[G]}^{-1}(\RG(G_{K, S} , \Z_p (1)))^- \Big{(} \subset \C_p \otimes_{\Z_p}  \Det_{\Z_p[G]}^{-1}(\RG(G_{K, S} , \Z_p (1)))^- \Big{)}
\]
such that 
\[
\widetilde{\vartheta_{K/k, S}^{0}} (\widetilde{z^{0,-}_{K/k, S}}) = \varepsilon^-\Theta_{K /k , S}^\ast(0)^\#
\]
in $\C_p[G]^-$, where $(-)^\#$ denotes the image of $(-)$ under the involution defined in \S \ref{ss:alg_notation}. 
\end{thm}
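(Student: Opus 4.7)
The statement is \cite[Theorem 1]{DKS}, so the proof is simply to invoke that result; nevertheless the strategy is worth outlining. It proceeds in three stages: an Iwasawa-theoretic reformulation, the proof of the equivariant Iwasawa main conjecture (EIMC) for totally real fields, and descent from the infinite level to $K$.

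First, I would pass to the cyclotomic tower $K_\infty = \bigcup_n K(\mu_{p^n})$, form the projective limit of the minus determinants $\Det^{-1}_{\Z_p[\Gal(K_n/k)]} \RG(G_{K_n, S}, \Z_p(1))^-$ over the minus Iwasawa algebra $\Lambda^- := \Z_p[[\Gal(K_\infty/k)]]^-$, and formulate EIMC as the assertion that this limit admits a canonical $\Lambda^-$-basis whose image in the total quotient ring is governed by the Deligne--Ribet equivariant $p$-adic $L$-function. The EIMC for abelian extensions of totally real fields is now a theorem of Ritter--Weiss and Kakde: Burns reduced the statement to a family of congruences between abelian $p$-adic $L$-functions, Kakde verified these using the Deligne--Ribet $q$-expansion principle combined with induction on $p$-groups, and the vanishing of the relevant cyclotomic $\mu$-invariant reduces to Ferrero--Washington.

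Second, I would descend from the infinite level to $K$ by applying $\Z_p[G] \otimesL_{\Lambda^-}(-)$ to the Iwasawa-theoretic basis. This produces a candidate for $\widetilde{z^{0,-}_{K/k,S}}$, but the interpolation formula for the Deligne--Ribet $L$-function at $s=0$ must be matched with $\varepsilon^- \Theta^\ast_{K/k,S}(0)^\#$ under the Dirichlet regulator $\reg_S$. Obstructions arise from trivial zeros at $p$-adic primes, from the minus-part of the Gross--Kuz'min conjecture (needed to control $H^2(G_{K,S},\Z_p(1))^-$ at the boundary), and from sign/involution compatibilities which account for the $\#$ on the right hand side.

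The main obstacle is precisely this descent step. One must verify that the Bockstein-type maps arising from the spectral sequence of the derived tensor product degenerate compatibly with the Dirichlet regulator, and that the leading term $\Theta^\ast_{K/k,S}(0)^\#$ emerges with the correct sign and involution. Dasgupta--Kakde--Silliman achieve this by refining the explicit constructions used in their proof of the Brumer--Stark conjecture, exploiting the fact that complex conjugation acts as $-1$ on the minus part to collapse many terms that would otherwise obstruct the analogous statement on the plus part.
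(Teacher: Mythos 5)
Your proposal takes essentially the same route as the paper: Theorem \ref{thm:ETNC-} is not proved there but simply invoked as Theorem 1 of \cite{DKS}, the only additional content being the remark that the determinant-theoretic reformulation used here is equivalent to the original statement of \cite{DKS} (via \cite[Proposition 3.5]{Kur21} and \cite[Proposition 3.4]{BKS16}). One minor caveat: your sketch of the internal argument of \cite{DKS} (equivariant Iwasawa main conjecture of Ritter--Weiss/Kakde followed by descent) is not how they in fact proceed---such descent is obstructed by trivial zeros and the Gross--Kuz'min conjecture, and they instead work directly with group-ring valued Hilbert modular forms and Ritter--Weiss modules refining their Brumer--Stark arguments---but since both you and the paper treat the result as a black box, this does not affect the correctness of your proposal.
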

\begin{rem}
The statement of \cite[Theorem 1]{DKS} is different from Theorem \ref{thm:ETNC-}. 
However, both are equivalent (cf.~\cite[Proposition 3.5 and its proof]{Kur21} and \cite[Proposition 3.4]{BKS16}). 
\end{rem}

We may reformulate Theorem \ref{thm:ETNC-} in terms of the complex $\Delta_{K, S} (0)$ and the isomorphism $\vartheta_{K/k, S}^{0}$ defined in Definition \ref{def:period_reg<0} as follows.

By the global duality (cf.~\cite[Proposition 5.4.3]{Nek06}), 
we have a canonical quasi-isomorphism 
\begin{equation}
\Delta_{K, S} (0) \simeq \RHom_{\Z_p} (\RG (G_{K,S} , \Z_p (1)) , \Z_p)[-3]. 
\end{equation}
This quasi-isomorphism together with \eqref{homRRG} induces an isomorphism
\begin{equation}\label{duality_j=1}
\Det_{\Z_p[G]}^{-1} (\Delta_{K, S} (0))^- \simeq 
%\Det_{\Z_p[G]} (\RG (G_{K,S} , \Z_p (1))^{-, \vee, \#}= 
\Det_{\Z_p[G]}^{-1} (\RG (G_{K,S} , \Z_p (1)))^{-, \#}. 
\end{equation}
%where the last equality follows from the fact that the grade of $\Det_{\Z_p[G]}^{-1} (\RG (G_{K,S} , \Z_p (1)))^-$ is zero 
%(see the definition of $(-)^\vee$ in \S \ref{App:det}). 
Using this, we define a $\Z_p[G]^-$-basis 
\[
z^{0,-}_{K/k, S} \in \Det_{\Z_p[G]}^{-1} (\Delta_{K, S} (0))^-
\]
as the image of $\widetilde{z^{0,-}_{K/k, S}} $ under the inverse of \eqref{duality_j=1}. 

For any finite prime $v$ of $k$, 
we set 
\begin{equation}\label{delta}
\delta_{v} (K/k):= 1 - \sigma_{v} \cdot \sum_{\tau \in I_v} \tau + \frac{1}{\# (D_v / I_v)} \sum_{\tau \in D_v} \tau
\in \Q[G]^\times, 
\end{equation}
where $D_v$ and $I_v$ are the decomposition group and the inertia group in $G$ respectively, 
and $ \sigma_v \in D_v / I_v$ is the arithmetic Frobenius.

\begin{lem}\label{lem:ETNC^-}
The basis $z^{0,-}_{K/k, S} \in \Det_{\Z_p[G]}^{-1} (\Delta_{K, S} (0))^- $ satisfies the following. 
\begin{itemize}
\item[(1)] We have 
\[
\vartheta_{K/k, S}^{0} (z^{0,-}_{K/k, S}) = \varepsilon^- \parenth{\prod_{v \in S_f} \delta_v (K / k)} \Theta_{K / k } (0)
\]
in $\C_p[G]^-$. 
\item[(2)] For any finite CM abelian extension $K' / k$ such that  $K \subset K'$,  
 the canonical transition map 
 \[
 \Det_{\Z_p[\Gal(K'/k)]}^{-1} (\Delta_{K', S} (0))^- \twoheadrightarrow \Det_{\Z_p[G]}^{-1} (\Delta_{K, S} (0))^-
 \]
sends  the basis $z^{0,-}_{K'/k, S} $ to $z^{0,-}_{K/k, S} $. 
\end{itemize}
\end{lem}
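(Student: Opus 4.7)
The plan is to deduce both parts from Theorem \ref{thm:ETNC-} by tracking the relationship between the two trivialization maps $\vartheta_{K/k, S}^{0}$ and $\widetilde{\vartheta_{K/k, S}^{0}}$. By construction, $z^{0,-}_{K/k, S}$ is the image of $\widetilde{z^{0,-}_{K/k, S}}$ under the inverse of the isomorphism \eqref{duality_j=1}, so computing $\vartheta_{K/k, S}^{0}(z^{0,-}_{K/k, S})$ amounts to evaluating the composite of $\vartheta_{K/k, S}^{0}$ with the inverse of \eqref{duality_j=1} at $\widetilde{z^{0,-}_{K/k, S}}$ and then invoking Theorem \ref{thm:ETNC-}.

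For (1), I would establish a commutative diagram (in the spirit of Theorem \ref{thm: comm of period reg}) relating $\vartheta_{K/k, S}^{0}$ to $\widetilde{\vartheta_{K/k, S}^{0}}$ through \eqref{duality_j=1}. The comparison separates into two contributions. On the global side, $\vartheta_{K/k, S}^{0}$ uses the dual valuation pairing $(\bigoplus_{w \in S_{f,K}} \ord_{w})^{*}$ while $\widetilde{\vartheta_{K/k, S}^{0}}$ uses the Dirichlet regulator $\reg_{S}$; these two maps are compatible (up to the $\log N(w)$ normalizations) under the global duality that underlies \eqref{duality_j=1}. On the local side, the exact triangle \eqref{triangle} contributes the determinant $\Det(\RG(K_{v}, \Z_{p}))^{-}$ at each $v \in S_{f}$, and a direct computation using the reciprocity map \eqref{rec_v} together with the explicit structure $\bigoplus_{w \mid v} \Q_{p} \simeq \Q_{p}[G/D_{v}]$ identifies this local determinant contribution with the group-ring unit $\delta_{v}(K/k)$. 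Combining these local contributions with the Euler factor identity relating $\Theta_{K/k, S}^{*}(0)$ and $\Theta_{K/k}(0)$, while carefully tracking the $\#$-involution, yields the stated formula.

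For (2), the canonical transition map is compatible with $\widetilde{\vartheta_{K'/k, S}^{0}}$ and $\widetilde{\vartheta_{K/k, S}^{0}}$ via base change of determinants, and the natural projection $\C_{p}[\Gal(K'/k)]^{-} \to \C_{p}[G]^{-}$ sends $\varepsilon^{-} \Theta_{K'/k, S}^{*}(0)^{\#}$ to $\varepsilon^{-} \Theta_{K/k, S}^{*}(0)^{\#}$ because $L$-values only depend on the induced character. The uniqueness statement in Theorem \ref{thm:ETNC-} then forces the transition map to send $\widetilde{z^{0,-}_{K'/k, S}}$ to $\widetilde{z^{0,-}_{K/k, S}}$. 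Since \eqref{duality_j=1} is functorial in the transition map, the corresponding compatibility follows for $z^{0,-}$.

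The main obstacle will be in (1): identifying the local contribution $\Det^{-1}(\RG(K_{v}, \Z_{p}))^{-}$ with $\delta_{v}(K/k)$, and keeping careful track of signs, orientations, and the $\#$-involution across the comparison between $\vartheta_{K/k, S}^{0}$ and $\widetilde{\vartheta_{K/k, S}^{0}}$. This parallels the computation in Theorem \ref{thm: comm of period reg} and will likely require a case split on the ramification behavior of $v$ in $K/k$.
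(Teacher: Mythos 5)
Your treatment of part (2) is essentially the paper's argument: one checks that the transition maps commute with $\widetilde{\vartheta_{K'/k,S}^{0}}$ and $\widetilde{\vartheta_{K/k,S}^{0}}$ (the paper does this via the commutative square comparing $\reg_S$ for $K'$ and $K$ through the norm map on units and the summation map on places), notes that the restriction $\C_p[\Gal(K'/k)]^-\to\C_p[G]^-$ sends $\varepsilon^-\Theta^{\ast}_{K'/k,S}(0)^\#$ to $\varepsilon^-\Theta^{\ast}_{K/k,S}(0)^\#$, and concludes from injectivity/uniqueness before transporting everything through the functoriality of \eqref{duality_j=1}. That part is fine.

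For part (1) there is a genuine gap: you have misidentified where the factors $\delta_v(K/k)$ come from, and the decomposition you propose does not match how $z^{0,-}_{K/k,S}$ is actually defined. The element $z^{0,-}_{K/k,S}$ is obtained from $\widetilde{z^{0,-}_{K/k,S}}$ directly through the global duality isomorphism \eqref{duality_j=1}, not through the triangle \eqref{triangle}; and the cohomological identifications underlying $\vartheta^{0}_{K/k,S}$ (Lemma \ref{lem:complex}(2)) are themselves induced by that same duality from \eqref{Q_p(1)}. Consequently no local complexes $\RG(K_v,\Z_p)$, no reciprocity map \eqref{rec_v}, and no ``local determinant contribution'' enter the comparison at all: the only discrepancies between $\vartheta^{0}_{K/k,S}\circ\eqref{duality_j=1}^{-1}$ and $\widetilde{\vartheta^{0}_{K/k,S}}$ are (i) the $\#$-involution coming from \eqref{homRRG}, and (ii) the automorphism $f$ of $(\bigoplus_{w\in S_{f,K}}\C_p)^-$ given by multiplication by $\log N(w)$ in each coordinate, which measures the difference between the $(\oplus_w\ord_w)^{\ast}$-trivialization and the $\reg_S$-trivialization and is handled by Lemma \ref{lem:det_easy}. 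The factors $\delta_v(K/k)$ then arise from a purely analytic identity, which your plan never produces: for odd characters the leading term $L_S^{\ast}(\chi,0)$ equals $L(\chi,0)$ times the derivatives $\log N(w_v)$ of the vanishing truncated Euler factors and the values of the non-vanishing ones, so that dividing $\varepsilon^-\Theta^{\ast}_{K/k,S}(0)$ by $\det_{\C_p[G]^-}(f)$ yields exactly $\varepsilon^-\bigl(\prod_{v\in S_f}\delta_v(K/k)\bigr)\Theta_{K/k}(0)$. If you instead insist on routing the comparison through \eqref{triangle}, you would additionally have to compare the trivialization of $\Det^{-1}_{\Q_p[G]}(\RG(G_{K,S},\Q_p(0)))$ on the $\varepsilon_1$-part with the dual of the $\Z_p(1)$-side (this is the content of diagram \eqref{diagram2} and Theorem \ref{thm: comm of period reg}, i.e., the machinery used later for Theorem \ref{thm:zeta>0}), a step your outline does not address; as written, the claimed identification of $\Det^{-1}(\RG(K_v,\Z_p))^-$ with $\delta_v(K/k)$ would not close the argument.
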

\begin{proof}
(1) We consider the following commutative diagram: 
\[
\xymatrix@C=70pt{
(\C_p \otimes_{\Z} \OO_{K, S}^\times)^- 
\ar[r]^-{\oplus_{w \in S_{f,K}}\ord_{w}}_{\sim}
\ar@{=}[d]
&( \bigoplus_{w \in S_{f,K}} \C_p )^-
\ar[d]^-{f}_-{\simeq}
\\
(\C_p \otimes_{\Z} \OO_{K, S}^\times)^- 
\ar[r]^-{\reg_S}_{\sim}
&( \bigoplus_{w \in S_{f,K}} \C_p )^-, 
}
\]
where the right vertical isomorphism is given by
\[
f : (\bigoplus_{w \in S_{f,K}} \C_p)^- \xrightarrow{\sim} (\bigoplus_{w \in S_{f,K}} \C_p)^- 
; \;\; (x_w)_{w \in S_{f,K}} \mapsto (\log (N (w)) \cdot x_w)_{w \in S_{f,K}}. 
\]
Therefore, by the definitions of the maps $\widetilde{\vartheta_{K/k, S}^{0}}$ and $\vartheta_{K/k, S}^{0}$, 
and Lemma \ref{lem:det_easy} below, 
we have
\begin{align*}
\vartheta_{K/k, S}^{0} (z^{0,-}_{K/k, S})  
&= 
\Big{(} {\det}_{\C_p[G]^-} (f | (\bigoplus_{w \in S_{f,K}} \C_p)^- )\Big{)}^{-1} \cdot 
\widetilde{\vartheta_{K/k, S}^{0}} (\widetilde{z^{0,-}_{K/k, S}})^\#
\\
&=
\Big{(} \sum_{\chi \in \widehat{G}} \prod_{\underset{\chi (v) =1}{v \in S_f}} \log (N(w_v))  e_\chi \Big{)}^{-1} 
\cdot \varepsilon^- \Theta_{K/k , S}^\ast (0) 
= \varepsilon^- \parenth{\prod_{v \in S_f} \delta_v (K / k)} \Theta_{K / k } (0), 
\end{align*}
where $w_v$ is a prime of $K$ above $v$ and 
in the product of the third term, $v$ runs all primes in $S_f$ such that $\ker \chi$ contains 
the decomposition group of $v$ in $G$. 

(2) We consider the following commutative diagram: 
\[
\xymatrix{
(\C_p \otimes_{\Z} \OO_{K', S}^\times)^-
 \ar[r]^-{\reg_S}_-{\sim} 
\ar[d]^-{\sim}_-{N_{K' / K}}
&( \bigoplus_{w' \in S_{f,K'}} \C_p )^- 
\ar[d]_-{\sim}^-{(a_{w'})_{w'} \mapsto (\sum_{w'|w} a_{w'})_w}
\\
(\C_p \otimes_{\Z} \OO_{K, S}^\times)^- \ar[r]^-{\reg_S}_-{\sim} 
&( \bigoplus_{w \in S_{f,K}} \C_p )^-, 
}
\]
where the left vertical arrow is the norm map. 
From this, we obtain the following commutative diagram: 
\[
\xymatrix{
 \Det_{\Z_p[\Gal(K' / k)]}^{-1} (\RG(G_{K', S} , \Z_p (1)))^- 
 \ar@{^{(}->}[r]^-{\widetilde{\vartheta_{K'/k, S}^{0}}}
 \ar@{->>}[d]
 &\C_p[\Gal(K' / k)]^-
 \ar@{->>}[d]_-{\sim}^-{\res_{K' / K}}
 \\
 \Det_{\Z_p[G]}^{-1} (\RG(G_{K, S} , \Z_p (1)))^-
\ar@{^{(}->}[r]^-{\widetilde{\vartheta_{K/k, S}^{0}}}
&
\C_p[G]^-, 
}
\]
where the left vertical arrow is the canonical transition map and 
right vertical arrow is the restriction map. 
Since $\res_{K' / K} (\varepsilon^- \Theta_{K' /k , S}^\ast(0)) = \varepsilon^-\Theta_{K /k , S}^\ast(0)$, 
we know that the left vertical arrow sends $\widetilde{z^{0,-}_{K'/k, S}}$ to $\widetilde{z^{0,-}_{K/k, S}}$. 
Since the isomorphism \eqref{duality_j=1} is compatible with the canonical transition maps, 
the claim (2) follows from this. 
\end{proof}

\begin{lem}\label{lem:det_easy}
Let $R$ be a commutative ring, $P$ a finitely generated projective $R$-module with (locally constant) rank $r$. 
Suppose that we are given an automorphism of $R$-module $f : P \xrightarrow{\sim} P$. 
Then, the isomorphism 
\[
\widetilde{\vartheta} : \Det_R (P) \otimes_R \Det_R^{-1} (P) \overset{f \otimes \id}{\xrightarrow{\sim}} 
\Det_R (P) \otimes_R \Det_R^{-1} (P) \overset{\ev}{\xrightarrow{\sim}} R
\]
coincides with the $\det_R (f|P)$ times the evaluation map 
$\Det_R (P) \otimes_R \Det_R^{-1} (P) \overset{\ev}{\xrightarrow{\sim}} R$.  
\end{lem}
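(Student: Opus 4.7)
The plan is to reduce the lemma to the basic fact that on an $r$-th exterior power $\bigwedge^{r}P$ of a free rank $r$ module, the map $\bigwedge^{r}f$ acts as scalar multiplication by $\det_R(f\mid P)$.

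First, I would observe that both sides of the asserted equality are $R$-linear maps $\Det_R(P)\otimes_R\Det_R^{-1}(P)\to R$, and both the formation of $\Det_R(P)$, of the evaluation map $\ev$, and of the determinant $\det_R(f\mid P)$ commute with localization at primes of $R$. Since two $R$-linear maps between finitely generated projective $R$-modules (here the source is even invertible) agree if and only if they agree after localization at every prime of $R$, it suffices to prove the statement after replacing $R$ by $R_{\mathfrak{p}}$ for each $\mathfrak{p}\in\Spec(R)$. After this localization $P$ becomes free of some (locally constant) rank $r$, so we may assume from the start that $P$ is free of rank $r$.

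Next, choose an $R$-basis $x_{1},\dots,x_{r}$ of $P$. Then $\Det_R(P)$ is free of rank one with basis $e:=x_{1}\wedge\cdots\wedge x_{r}$, and $\Det_R^{-1}(P)=\Hom_R(\Det_R(P),R)$ is free with dual basis $e^{\ast}$ characterized by $e^{\ast}(e)=1$. By the functoriality of determinants recalled in \S\ref{App:det}, the map induced by $f$ on $\Det_R(P)$ sends $e$ to $f(x_{1})\wedge\cdots\wedge f(x_{r})$, which by the defining property of the determinant of a matrix equals $\det_R(f\mid P)\cdot e$. Hence under the composition displayed in the lemma we have
\[
e\otimes e^{\ast}
\xmapsto{f\otimes\id}
\det_R(f\mid P)\cdot e\otimes e^{\ast}
\xmapsto{\ev}
\det_R(f\mid P)\cdot e^{\ast}(e)
=\det_R(f\mid P).
\]
On the other hand $\ev(e\otimes e^{\ast})=1$, so $\widetilde{\vartheta}=\det_R(f\mid P)\cdot\ev$ as $R$-linear maps, as required.

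There is essentially no serious obstacle: the only minor subtlety is verifying that the formation of $\Det_R$, the evaluation pairing, and the determinant of an endomorphism are all compatible with the localization used in the first step, which is standard and already implicit in the properties of the determinant functor listed at the end of \S\ref{App:det}.
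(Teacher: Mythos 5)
Your proof is correct and follows essentially the same route as the paper: reduce to the case of a free module and then compute that $\Det_R(f)$ multiplies a basis element $x_1\wedge\cdots\wedge x_r$ by $\det_R(f\mid P)$, so that $\widetilde{\vartheta}=\det_R(f\mid P)\cdot\ev$ on the generator $e\otimes e^{\ast}$. The only (harmless) difference is in the reduction step — you localize at primes of $R$, while the paper passes to connected components of $\Spec(R)$ — and your localization argument is, if anything, the more careful way to justify the reduction.
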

\begin{proof}
Since we can consider for each connected component on $\Spec (R)$, 
we may assume that $P$ is a free $R$-module with rank $r \in \Z_{\geq 1}$.  
Take a basis $\{ x_1, \dots , x_r\}$ of $P$. 
Then, we have 
\begin{align*}
\widetilde{\vartheta} ( (x_1 \wedge \cdots \wedge x_r) \otimes (x_1 \wedge \cdots x_r)^\ast ) 
&= \ev ( (f(x_1) \wedge \cdots \wedge f(x_r)) \otimes (x_1 \wedge \cdots x_r)^\ast )
\\
&=
\ev ( {\det}_R (f | P)(x_1 \wedge \cdots \wedge x_r) \otimes (x_1 \wedge \cdots x_r)^\ast )
\\
&=  {\det}_R (f | P). 
\end{align*}
From this, the claim is clear. 
\end{proof}

%%%%%%%%%%%%%%%%%%%%%%%%%%%%%%%%%%%%%%%%%%%%%%%
\subsection{Zeta element for $\Z_p (j)$ when $j \leq 0$}\label{ss:main_conj}
%%%%%%%%%%%%%%%%%%%%%%%%%%%%%%%%%%%%%%%%%%%%%%%

In this subsection, we give a main result of \S \ref{s:zeta<1}. 

Let $K/k$ be a finite abelian CM-extension and $S$  a finite set of places of $k$ which contains $S_\infty \cup S_p \cup S_{\ram} (K/k)$
as in the previous section. We put $S_f := S \setminus S_\infty$, $K_\infty := \cup_{n \geq 0} K (\mu_{p^n})$, and 
$\GG_{K_{\infty}} := \Gal (K_\infty / k)$. 
We also set 
\[
\Omega(K_\infty / k) := 
\{ k \subset L \subset K_{\infty} \;\;| \;\; [L:k]<\infty , \;L: \text{CM-field} \}. 
\]
For any $L \in \Omega(K_\infty / k)$, 
we put $\GG_L := \Gal (L/k)$ and $\Lambda := \Z_p[[\GG_{K_\infty}]]$. 
Let $c \in \GG_{K_\infty}$ be the complex conjugation. 
For any $j \in \Z$, we consider an idempotent
\[\varepsilon_j := \frac{1 + (-1)^jc}{2} \in \Lambda. 
\]
For any $ L \in \Omega(K_\infty / k) $, 
we also write $\varepsilon_j \in \Z_p[\GG_L]$ for the natural image of $\varepsilon_j$ under the restriction map 
$\Lambda \twoheadrightarrow \Z_p[\GG_L]$. 
For any $m \in \Z$, 
we consider the ring automorphism 
\[
\twist_m : \Lambda \xrightarrow{\sim} \Lambda ; \sigma \mapsto \chi_{\cyc} (\sigma)^m \sigma
\] induced by the $m$-th power of the cyclotomic character $\chi_{\cyc} : \GG_{K_\infty} \to \Z_p^\times$. 
Then, this induces a ring isomorphism 
$
\twist_m : \varepsilon_j \Lambda \xrightarrow{\sim}  \varepsilon_{j+m} \Lambda 
$
for any $j, m \in \Z$ and we also write $\twist_m$ for this. 

%\begin{defn}\label{defn:Delta_Iw}
For any $j \in \Z $, 
we consider a complex of $\Lambda$-modules 
$
\Delta_{K_\infty, S} (j) 
$
defined by the projective limit of $ \Delta_{L, S} (j)$ for $L \in \Omega(K_\infty / k)$, 
where $\Delta_{L, S} (j)$ is defined in \S \ref{ss:period reg map}. 
%\end{defn}

A main result in \S \ref{s:zeta<1} is the following. 

\begin{thm}\label{thm:zeta<1}
For any $j \in \Z_{\leq 0}$,  
there is a (unique) $\varepsilon_{1-j} \Lambda$-basis 
\[
\varepsilon_{1-j} Z_{K_\infty / k, S}^{j} \in \varepsilon_{1-j} \Det_{\Lambda}^{-1} (\Delta_{K_\infty, S} (j)) 
\]
such that the following statements hold. 
\begin{itemize}
\item[(1)]
For any $L \in \Omega(K_\infty / k) $,  
the following composite map 
\[
\varepsilon_{1-j} \Det_{\Lambda}^{-1} (\Delta_{K_\infty, S} (j)) {\twoheadrightarrow} 
\varepsilon_{1-j}\Det_{\Z_p[\GG_L]}^{-1} (\Delta_{L, S} (j)) \xrightarrow{\vartheta_{L/k, S}^{j}} 
\varepsilon_{1-j} \C_p[\GG_{L}] 
\]
sends $\varepsilon_{1-j} Z_{K_\infty / k, S}^{j}$ to 
\[
\begin{cases}
\varepsilon_{1-j} \Theta_{L / k , S} (j)  & \text{if } j <0, 
\\
\varepsilon_{1} \parenth{\prod_{v \in S_f} \delta_v (L / k)} \Theta_{L / k } (0) 
& \text{if } j =0, 
\end{cases}
\]
where the first surjective map is induced by the natural identification 
$\Z_p[\GG_{L}] \otimesL_{\Lambda} \Delta_{K_\infty, S} (j)
=
\Delta_{L, S} (j) $ and the second map $\vartheta_{L/k, S}^{j}$ is defined in Definition \ref{def:period_reg<0}. 
\item[(2)]
For any non-positive integers $j$ and $j'$, 
we have 
\[
\Twist_{j, j'}^\Delta ( \varepsilon_{1-j}Z_{K_\infty / k, S}^{j})  = \varepsilon_{1-j'} Z_{K_\infty / k, S}^{j'}, 
\]
where $\Twist_{j, j'}^\Delta$ is the $\twist_{j-j'}$-semilinear isomorphism defined in \S \ref{ss:twist_CM}.
\end{itemize}
\end{thm}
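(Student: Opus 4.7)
The plan is to first build $\varepsilon_1 Z^0_{K_\infty/k,S}$ at the bottom of the twist tower by passing to the inverse limit of the Dasgupta--Kakde--Silliman bases, and then to define $\varepsilon_{1-j}Z^j_{K_\infty/k,S}$ for every $j<0$ by applying the cyclotomic twist $\Twist^\Delta_{0,j}$. With this definition, property (2) will be built in (via the cocycle relation $\Twist^\Delta_{j,j'}\circ\Twist^\Delta_{0,j}=\Twist^\Delta_{0,j'}$ implicit in the semilinear twist structure), uniqueness follows because each $\Twist^\Delta_{j,j'}$ is an isomorphism, and the real content is verifying the interpolation property (1) at every $j\leq 0$.

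For the $j=0$ step, Lemma \ref{lem:ETNC^-}(2) shows that the system $\{z^{0,-}_{L/k,S}\}_{L\in\Omega(K_\infty/k)}$ is compatible under the canonical transition maps. Representing $\Delta_{K_\infty,S}(0)$ by a bounded complex of finitely generated projective $\Lambda$-modules identifies $\varepsilon_1 \Det_\Lambda^{-1}(\Delta_{K_\infty,S}(0))$ with $\varprojlim_L \varepsilon_1 \Det_{\Z_p[\GG_L]}^{-1}(\Delta_{L,S}(0))$, and we define $\varepsilon_1 Z^0_{K_\infty/k,S}$ to be the inverse limit of the $z^{0,-}_{L/k,S}$. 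Since each finite-level component is a basis and the transition maps are surjective, the limit is a $\varepsilon_1\Lambda$-basis, and property (1) at $j=0$ is exactly Lemma \ref{lem:ETNC^-}(1).

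For $j<0$, set $\varepsilon_{1-j} Z^j_{K_\infty/k,S}:=\Twist^\Delta_{0,j}(\varepsilon_1 Z^0_{K_\infty/k,S})$. To verify (1), I would fix $L\in\Omega(K_\infty/k)$ and establish a commutative diagram relating, on one side, the projection of the twisted element to $\varepsilon_{1-j}\Det_{\Z_p[\GG_L]}^{-1}(\Delta_{L,S}(j))$ followed by $\vartheta^j_{L/k,S}$, and on the other, the projection of $\varepsilon_1 Z^0_{K_\infty/k,S}$ followed by $\vartheta^0_{L/k,S}$, up to a suitable $\twist_{-j}$-semilinear map on $\C_p[\GG_L]$. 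Character by character (for $\chi$ a character of $\GG_L$ in the appropriate eigenspace of $c$, viewed also as a character of $\GG_{K_\infty}$), this twist acts as $\chi\mapsto\chi\chi_{\cyc}^{-j}$, and the identity to be checked reduces to the standard interpolation $L_S(\chi\chi_{\cyc}^{-j},0)=L_S(\chi,j)$ together with the Euler-factor bookkeeping that $\chi\chi_{\cyc}^{-j}(\delta_v(L/k))$ equals $1-\chi(\sigma_v)N(v)^{-j}$ when $\chi$ is unramified at $v$ and equals $1$ otherwise, producing exactly the correction that converts $\Theta_{L/k}(j)$ into $\Theta_{L/k,S}(j)$.

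The main obstacle will be establishing this commutative diagram. At places $v\nmid p$ the twist interacts transparently with the local reciprocity maps that enter $\vartheta^0_{L/k,S}$ and $\vartheta^j_{L/k,S}$; but at $v\mid p$ the period regulator depends on the Bloch--Kato dual exponential and the local Iwasawa cohomology, so one must use the explicit description of $\Twist^\Delta_{0,j}$ from \S\ref{ss:twist_CM} to match it with the twist that interpolates $\exp^{*}_{\Q_p(j)}$ across different $j$. Once this local compatibility is in hand, the vanishing in Lemma \ref{lem:complex}(1) kills the global cohomological contributions for $j<0$, so everything reduces to the formal local-at-$S$ character calculation indicated above.
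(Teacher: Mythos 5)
Your construction of the element itself matches the paper: $\varepsilon_1 Z^0_{K_\infty/k,S}$ as the inverse limit of the Dasgupta--Kakde--Silliman bases via Lemma \ref{lem:ETNC^-}(2), the definition $\varepsilon_{1-j}Z^j_{K_\infty/k,S}:=\Twist^\Delta_{0,j}(\varepsilon_1 Z^0_{K_\infty/k,S})$ for $j<0$, part (2) being built in, and part (1) at $j=0$ being exactly Lemma \ref{lem:ETNC^-}(1). The gap is in your verification of (1) for $j<0$. You reduce it to ``the standard interpolation $L_S(\chi\chi_{\cyc}^{-j},0)=L_S(\chi,j)$'' plus Euler-factor bookkeeping, but this is not a formal identity: $\chi\chi_{\cyc}^{-j}$ is an infinite-order $p$-adic character, so $L_S(\chi\chi_{\cyc}^{-j},0)$ has no meaning as a complex $L$-value, and interpreted $p$-adically the statement ``the value of (the image of) $Z^0$ at $\chi\chi_{\cyc}^{-j}$ equals $L_S(\chi,j)$'' is precisely the deep input you are silently assuming, namely the existence of the Deligne--Ribet $p$-adic $L$-function $\cL_{K_\infty/k,S}\in\varepsilon_0\Lambda^\sim$ with interpolation property \eqref{interpolation property} at \emph{all} non-positive integers. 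Knowing the values of your element at finite-order characters (which is all that the $j=0$ case of DKS gives you) determines it uniquely in the total quotient ring $Q$, but does not by itself tell you its values at the twisted characters $\chi\chi_{\cyc}^{-j}$; relating those values to $L_S(\chi,j)$ is the content of Deligne--Ribet, not of Euler-factor manipulation. The paper's proof makes this explicit in two steps you are missing: Proposition \ref{prop:zeta=p_adic_L} identifies the image of $\varepsilon_1 Z^0_{K_\infty/k,S}$ in $\varepsilon_1 Q$ with $\twist_1(\cL_{K_\infty/k,S})$ by evaluating at a dense set of finite-order odd characters $\chi$ with $\ker\chi\not\supseteq D$ (a density and acyclicity argument using Lemma \ref{lem:complex}(2), needed because at characters trivial on decomposition groups the finite-level complex is not acyclic), and only then does the interpolation property \eqref{interpolation property} at negative $j$ give claim (1), after checking the compatibility of $\Twist^\Delta_{0,j}$ with $\twist_{-j}$ on $Q$.

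A secondary point: your stated ``main obstacle'' concerning the Bloch--Kato dual exponential at $v\mid p$ does not actually arise in this theorem. For $j<0$ the map $\vartheta^j_{L/k,S}$ of Definition \ref{def:period_reg<0} is just the canonical trivialization of the acyclic complex $\Q_p\otimesL_{\Z_p}\varepsilon_{1-j}\Delta_{L,S}(j)$ (Lemma \ref{lem:complex}(1)); the dual exponential enters only into $\Phi^{\loc,j}_{K/k,S}$ and the positive-twist regulator used for Theorem \ref{thm:zeta>0}. So the local-at-$p$ comparison you anticipate is not where the work lies; the missing work is the identification with, and the use of, the $p$-adic $L$-function.
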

By Remark \ref{rem:L-value}, we note that the $L$-values which appear in Theorem \ref{thm:zeta<1} (1) 
are non-zero divisors in $\varepsilon_{1-j} \C_p[\GG_{L}] $. 

%%%%%%%%%%%%%%%%%%%%%%%%%%%%%%%%%%%%%%%%%%%%%%%
\subsection{Proof of Theorem \ref{thm:zeta<1}}\label{ss:prf_zeta<1}
%%%%%%%%%%%%%%%%%%%%%%%%%%%%%%%%%%%%%%%%%%%%%%%
In this subsection, we prove Theorem \ref{thm:zeta<1}. 

By Lemma \ref{lem:ETNC^-} (2), we can define a $\varepsilon_1 \Lambda$-basis
$\varepsilon_1 Z_{K_\infty / k, S}^{0}   \in \varepsilon_1 \Det_{\Lambda}^{-1} (\Delta_{K_\infty, S} (0))$ by 
\[
\varepsilon_1 Z_{K_\infty / k, S}^{0} := (z_{L/k, S}^{0, -})_{L \in \Omega(K_\infty / k) } 
\in 
\varprojlim_{L \in \Omega(K_\infty / k) } \Det_{\Z_p[\GG_L]}^{-1} (\Delta_{L, S} (0))^- 
\simeq
\varepsilon_1 \Det_{\Lambda}^{-1} (\Delta_{K_\infty, S} (0)).  
\]

On the other hand, we consider the $p$-adic $L$-function. 
Let $Q$ be the total fractional ring of $\Lambda$. 
We set 
\[
\Lambda^\sim := \{x \in Q | (\sigma -1)x \in \Lambda
\text{ for any } \sigma \in \GG_{K_\infty} \}.
\]
For any $L \in \Omega(K_\infty / k)$, we write $\res_{K_\infty / L} : \Lambda \to \Z_p[\GG_L]$ for the restriction map. 
Note that for any $j \in \Z_{\leq 0}$ and $L \in \Omega(K_\infty / k)$, 
the composite map $\res_{K_\infty / L} \circ \twist_{1-j} : \varepsilon_0 \Lambda \to \Z_p[\GG_L]$ can be 
extended to $\res_{K_\infty / L} \circ \twist_{1-j} : \varepsilon_0 \Lambda^\sim \to \Q_p[\GG_L]$. 
%Then, for any character $\chi$ of $\GG_{K_\infty}$ with finite order and $j \in \Z$, 
%we note that the composite map $\chi \circ \twist_j : \Lambda^\sim \to \C_p$ is well-defined 
%except the case when $j=0$ and $\chi$ is trivial. 
Then, there exists a canonical element 
\[
\cL_{K_\infty /k , S} \in \varepsilon_0  \Lambda^{\sim}
\]
(the Deligne--Ribet $p$-adic $L$-function) that satisfies 
\begin{equation}\label{interpolation property}
\res_{K_\infty / L} (\twist_{1-j} (\cL_{K_\infty / k , S})) = \varepsilon_{1-j} \Theta_{L/k, S} (j) 
\end{equation}
for any $L \in \Omega(K_\infty / k)$ and $j \in \Z_{\leq 0}$. 

\begin{prop}\label{prop:zeta=p_adic_L} 
The complex $\varepsilon_1 Q \otimesL_{\varepsilon_1\Lambda} \varepsilon_1 \Delta_{K_\infty, S} (0)$ is acyclic and the following canonical map
\[
\varepsilon_1 \Det_{\Lambda}^{-1} (\Delta_{K_\infty, S} (0)) \hookrightarrow 
\varepsilon_1Q \otimes_{\varepsilon_1 \Lambda} \varepsilon_1 \Det_{\Lambda}^{-1} (\Delta_{K_\infty, S} (0))
\xrightarrow{\sim} \varepsilon_1 Q 
\]
sends the basis $\varepsilon_1 Z_{K_\infty / k, S}^{0} \in \varepsilon_1 \Det_{\Lambda}^{-1} (\Delta_{K_\infty, S} (0))$ 
to $\twist_{1} (\cL_{K_\infty / k , S}) \in \varepsilon_1Q$. 
\end{prop}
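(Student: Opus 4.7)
The argument naturally splits into two stages: establishing the acyclicity statement via Iwasawa theory, and then identifying the resulting generator with $\twist_{1}(\cL_{K_\infty / k, S})$ through a character-wise comparison.

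For the acyclicity, the plan is to invoke global duality \cite[Proposition 5.4.3]{Nek06} to identify $\varepsilon_1 \Delta_{K_\infty, S}(0)$ (up to the involution $\#$ and a shift) with the $\Lambda$-linear dual of $\varepsilon_1 \RG(G_{K_\infty, S}, \Z_p(1))$. By classical Iwasawa theory over totally real fields --- essentially the Iwasawa main conjecture due to Wiles, combined with the vanishing of the relevant Leopoldt-type defect in the $\varepsilon_1$-component --- the cohomology of this complex is $\Lambda$-torsion. Consequently, $\varepsilon_1 Q \otimesL_{\varepsilon_1 \Lambda} \varepsilon_1 \Delta_{K_\infty, S}(0)$ is acyclic, which produces the canonical embedding $\varepsilon_1 \Det^{-1}_{\Lambda}(\Delta_{K_\infty, S}(0)) \hookrightarrow \varepsilon_1 Q$.

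For the identification, let $z \in \varepsilon_1 Q$ denote the image of $\varepsilon_1 Z_{K_\infty/k, S}^{0}$ under this embedding. The idea is to check the equality $z = \twist_{1}(\cL_{K_\infty / k, S})$ character by character, exploiting that $\varepsilon_1 Q$ embeds into the product of its character-component fields and that the family of specialization maps to the group rings $\varepsilon_1 \Q_p[\GG_L]$ for $L \in \Omega(K_\infty / k)$ jointly separates elements. By the construction of $\varepsilon_1 Z_{K_\infty/k, S}^{0}$ as the inverse limit of $z_{L/k, S}^{0, -}$ together with the functoriality of determinant modules under base change, the specialization of $z$ at a finite layer $L$ coincides with $\vartheta^{0}_{L/k, S}(z_{L/k, S}^{0, -})$, which equals $\varepsilon_1 \prod_{v \in S_f} \delta_v(L/k) \Theta_{L/k}(0)$ by Lemma \ref{lem:ETNC^-}(1). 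On the other hand, \eqref{interpolation property} at $j = 0$ gives $\res_{K_\infty/L}(\twist_{1}(\cL_{K_\infty/k, S})) = \varepsilon_1 \Theta_{L/k, S}(0)$. The claim therefore reduces to the purely group-ring identity
\[
\varepsilon_1 \prod_{v \in S_f} \delta_v(L/k) \Theta_{L/k}(0) = \varepsilon_1 \Theta_{L/k, S}(0),
\]
verified by evaluating at each $\chi$ with $\chi(c) = -1$, since $\chi(\delta_v(L/k))$ precisely reproduces the Euler factor at $v$ that distinguishes $L(\chi, 0)$ from $L_S(\chi, 0)$.

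The principal technical obstacle is ensuring the compatibility between the $Q$-trivialization (defined globally via the Iwasawa-level acyclicity) and the finite-level trivialization $\vartheta^{0}_{L/k, S}$ (defined via the explicit cohomological description of $\Delta_{L, S}(0)$ in Definition \ref{def:period_reg<0}); tracing this compatibility through the base change of the canonical isomorphisms of determinants used in \S \ref{ss:period reg map} requires the careful diagram-chase underlying Lemma \ref{lem:nine_term}.
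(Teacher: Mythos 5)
There is a genuine gap, and it sits exactly at the point you flag as a ``technical obstacle'': the claim that the specialization of $z$ at a finite layer $L$ coincides with $\vartheta^{0}_{L/k,S}(z^{0,-}_{L/k,S})$ is not merely delicate, it is false in general. First, $z$ lives in $\varepsilon_1 Q$ and $\res_{K_\infty/L}$ does not extend to $Q$, so ``the specialization of $z$ at $L$'' is not defined without further argument. More seriously, by Lemma \ref{lem:complex}(2) the complex $\Q_p\otimes_{\Z_p}\varepsilon_1\Delta_{L,S}(0)$ is \emph{not} acyclic at a general layer $L$ (its cohomology is nonzero at every odd character that is trivial on the decomposition group of some $v\in S_f$), and $\vartheta^{0}_{L/k,S}$ is then a regulator-type trivialization via $(\oplus_w\ord_w)^\ast$, not the canonical trivialization of an acyclic complex; comparing it with the $Q$-level trivialization is a descent problem, not a base-change diagram chase via Lemma \ref{lem:nine_term}. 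Indeed your own reduction shows the compatibility cannot hold: together with Lemma \ref{lem:ETNC^-}(1) and \eqref{interpolation property} it would force the group-ring identity $\varepsilon_1\prod_{v\in S_f}\delta_v(L/k)\,\Theta_{L/k}(0)=\varepsilon_1\Theta_{L/k,S}(0)$, which is false. Evaluate at an odd character $\chi$ of $\GG_L$ with $\chi|_{D_v}=1$ for some $v\in S_f$: the right-hand side vanishes because of the Euler factor $1-\chi(v)=0$, whereas on the left $\chi(\delta_v(L/k))=1-\#I_v+\#I_v=1$, each $\chi(\delta_{v'}(L/k))$ is nonzero since $\delta_{v'}(L/k)\in\Q[\GG_L]^\times$, and $L(\chi,0)\neq 0$ for odd $\chi$. (A second mismatch occurs at non-faithful $\chi$ unramified at a prime $v$ that ramifies in $L$, where $\chi(\delta_v(L/k))=1-\#I_v\,\chi(v)\neq 1-\chi(v)$.)

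The paper's proof is built to avoid exactly this. It sets $D=\bigcap_{v\in S_f}D_v\subset\GG_{K_\infty}$, notes $[\GG_{K_\infty}:D]<\infty$, and verifies $\chi(Z)=\chi(\twist_1(\cL_{K_\infty/k,S}))$ only for odd finite-order characters with $\ker\chi\not\supseteq D$. For such $\chi$ one descends to $L_\chi=K_\infty^{\ker\chi}$, on which $\chi$ is faithful and no prime of $S_f$ splits completely; then $\C_p^\chi\otimesL_{\Q_p[\GG_{L_\chi}]}\Delta_{L_\chi,S}(0)$ \emph{is} acyclic by Lemma \ref{lem:complex}(2), the $\chi$-component of $\vartheta^{0}_{L_\chi,S}$ is the canonical trivialization (so compatibility with the $Q$-level trivialization is immediate), and precisely for such $\chi$ one computes $\chi\bigl(\prod_{v\in S_f}\delta_v(L_\chi/k)\,\Theta_{L_\chi/k}(0)\bigr)=L_S(\chi,0)$, which matches \eqref{interpolation property}; the finitely many excluded characters do not affect the equality in $\varepsilon_1 Q$. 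If you restrict your character-wise comparison to these ``good'' characters instead of asserting compatibility at every layer and the identity in the whole group ring, your argument becomes the paper's; as written, the two central claims fail.
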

\begin{proof}
The acyclicity of the complex $\varepsilon_1 Q \otimesL_{\varepsilon_1\Lambda} \varepsilon_1 \Delta_{K_\infty, S} (0)$ is well known (cf.~\cite[Theorem 3.7]{BS20functional}). 

Let $Z \in \varepsilon_1 Q$ be the image of $\varepsilon_1 Z_{K_\infty / k, S}^{0}$ under the composite map in the proposition. 
For any character $\chi$ of $\GG_{K_\infty}$ with finite order, 
we consider the map $ \varepsilon_1 Q \to \C_p \cup \{\infty\}$ which is induced by $\chi$ and 
also write $\chi$ for this. 
To prove the equality $Z = \twist_{1} (\cL_{K_\infty / k , S})$, it is enough to show that $\chi (Z) =\chi (\twist_{1} (\cL_{K_\infty / k , S}) )$ 
for almost all odd characters $\chi$ of $\GG_{K_\infty}$ with finite order. 

Let $D \subset \GG_{K_\infty}$ be the intersection of the decomposition groups of all $v \in S_f$ in $\GG_{K_\infty}$. 
Since no finite prime splits completely in $K_\infty / K$, we know that $[\GG_{K_\infty} : D] < \infty$. 
Let $\chi$ be an odd character of $\GG_{K_\infty}$ with finite order such that $\ker \chi \nsupseteq D$. 
We also set $L_\chi := K_\infty^{\ker \chi} \in \Omega (K_\infty / k)$, which is the CM-field attached to $\ker \chi$. 
We regard $\chi$ as a faithful character of $\GG_{L_\chi}$. 
Then, since no prime in $S_f$ splits completely in $L_\chi / k$ and $\chi$ is faithful, 
we know that the complex $\C_p^\chi \otimesL_{\Q_p[\GG_{L_\chi}]} \Delta_{L_\chi, S} (0)$ 
is acyclic by Lemma \ref{lem:complex} (2), where $\C_p^\chi = \C_p$ and $\GG_{L_\chi}$ acts this via $\chi$. 
In this case, the $\chi$-component of the isomorphism $\vartheta_{L_\chi, S}^{0, \chi}$ is the canonical one and 
the image of $\varepsilon_1 Z_{K_\infty / k, S}^{0}$ under the composite map 
\[
\varepsilon_1 \Det_{\Lambda}^{-1} (\Delta_{K_\infty, S} (0)) \to 
\C_p^\chi \otimesL_{\Q_p[\GG_{L_\chi}]} \Delta_{L_\chi, S} (0) 
\overset{\vartheta_{L_\chi, S}^{0, \chi}}{\xrightarrow{\sim}} \C_p
\]
coincides with $\chi (Z)$. 
Therefore, by Lemma \ref{lem:ETNC^-}, 
we have 
\[
\chi (Z) = \chi \Big{(}(\prod_{v \in S_f} \delta_v (L_\chi / k)) \Theta_{L_\chi / k } (0))\Big{)} 
= L_S (\chi, 0). 
\]
Therefore, by the interpolation property of the $p$-adic $L$-function \eqref{interpolation property}, 
we have $\chi (Z) = \chi (\twist_{1} (\cL_{K_\infty / k , S}) )$ for any odd character of $\GG_{K_\infty}$ with finite order such that $\ker \chi \nsupseteq D$. 
Since $[\GG_{K_\infty} : D] < \infty$, 
the number of odd characters of $\GG_{K_\infty}$ with finite order such that $\ker \chi \supseteq D$ is finite. 
This completes the proof of Proposition \ref{prop:zeta=p_adic_L}. 
\end{proof}
Finally, we prove Theorem \ref{thm:zeta<1}. 

\begin{proof}[Proof of Theorem \ref{thm:zeta<1}]
We already define the $\varepsilon_1 \Lambda$-basis 
$\varepsilon_1 Z_{K_\infty / k, S}^{0} \in \varepsilon_1 \Det_{\Lambda}^{-1} (\Delta_{K_\infty, S} (0))$ when $j=0$. 
For any negative integer $j$, we consider the following $\twist_{-j}$-semilinear isomorphism 
\[
\Twist_{0, j}^\Delta : \varepsilon_1 \Det_{\Lambda}^{-1} (\Delta_{K_\infty, S} (0)) 
\xrightarrow{\sim} \varepsilon_{1-j} \Det_{\Lambda}^{-1} (\Delta_{K_\infty, S} (j))
\]
which is defined in \S \ref{ss:twist_CM}. 
Define a $\varepsilon_{1-j} \Lambda$-basis 
$\varepsilon_{1-j} Z_{K_\infty / k, S}^{j} \in \varepsilon_{1-j} \Det_{\Lambda}^{-1} (\Delta_{K_\infty, S} (j))$ by 
\[
\varepsilon_{1-j} Z_{K_\infty / k, S}^{j} := \Twist_{0, j}^\Delta  (\varepsilon_1 Z_{K_\infty / k, S}^{0})
 \in \varepsilon_{1-j} \Det_{\Lambda}^{-1} (\Delta_{K_\infty, S} (j)). 
\]
Then, the claim (2) is trivial by definition. 
We prove the claim (1). 

By Lemma \ref{lem:ETNC^-} (1) and the definition of $\varepsilon_{1} Z_{K_\infty / k, S}^{0} \in \varepsilon_{1} \Det_{\Lambda}^{-1} (\Delta_{K_\infty, S} (0))$, 
the claim (1) is valid when $j=0$. 
Thus, we check the claim (1) when $j<0$. 
We consider the following commutative diagram: 
\[\xymatrix{
\varepsilon_{1} \Det_{\Lambda}^{-1} (\Delta_{K_\infty, S} (0)) \ar@{^{(}_->}[r]
 \ar[d]^-{\sim}_{\Twist_{0, j}^\Delta} 
&
\varepsilon_{1} Q \otimes_{\Lambda} \Det_{\Lambda}^{-1} (\Delta_{K_\infty, S} (0)) 
\ar[r]^-{\sim}
& \varepsilon_{1} Q  \ar[d]^-{\sim}_{\twist_{-j}} \\
\varepsilon_{1-j} \Det_{\Lambda}^{-1} (\Delta_{K_\infty, S} (j)) \ar@{^{(}_->}[r] 
&
\varepsilon_{1-j} Q \otimes_{\Lambda} \Det_{\Lambda}^{-1} (\Delta_{K_\infty, S} (j)) 
\ar[r]^-{\sim}
& \varepsilon_{1-j} Q 
}\]
(the commutativity can be checked as in the proof of \cite[Lemma 4.5]{Tsoi19}). 
Then, by Proposition \ref{prop:zeta=p_adic_L}, 
the bottom horizontal composite arrow sends the basis $\varepsilon_{1-j} Z_{K_\infty / k, S}^{j} $ to 
the $(1-j)$-twist of the $p$-adic $L$-function $\twist_{1-j} (\cL_{K_\infty / k , S})$. 
Since $j<0$, the complex $\C_p \otimesL_{\Z_p} \varepsilon_{1-j} \Delta_{L, S} (j)$ is acyclic by Lemma \ref{lem:complex} (1) 
and $\vartheta_{L, S}^{1-j}$ is the canonical map for any $L \in \Omega (K_\infty / k)$. 
Therefore, when $j<0$, the claim (1) follows from the interpolation property of the $p$-adic $L$-function \eqref{interpolation property}.  
This completes the proof of Theorem \ref{thm:zeta<1}. 
\end{proof}

%%%%%%%%%%%%%%%%%%%%%%%%%%%%%%%%%%%%%%%%%%%%%%%%%%%%%%%%%
\section{Functional equation of zeta elements}\label{sec:func_eq}
%%%%%%%%%%%%%%%%%%%%%%%%%%%%%%%%%%%%%%%%%%%%%%%%%%%%%%%%%

In this section, we prove Theorem \ref{thm:zeta>0}. 

We fix an odd prime $p$ and an isomorphism $\C\simeq \C_p$ and identify them. 
Let $K/k$ be a finite abelian CM extension, and we use the same notation and conventions as in §\ref{ss:main_conj}. 
In addition, we assume that $p$ is  {\it unramified} in $K / \Q$. 

%%%%%%%%%%%%%%%%%%%%%%%%%%%%%%%%%%%%%%%%%%%%%%%%%
\subsection{Definition of the basis}\label{ss:def_basis}
%%%%%%%%%%%%%%%%%%%%%%%%%%%%%%%%%%%%%%%%%%%%%%%%%

First, we state a main theorem of \cite{ADK}. 

Recall that $S$ is a finite set of places of $k$ which contains $S_p \cup S_\infty \cup S_{\ram} (K/k)$ and 
we set $S_f := S \setminus S_\infty$. 
For any integer $j$, we define an invertible $\Lambda$-module $\Xi_{K_\infty / k, S}^{\loc} (j)$ by
\[
\Xi_{K_\infty / k, S}^{\loc} (j) := \varprojlim_{L \in \Omega(K_\infty / k)} \Xi_{L/ k, S}^{\loc} (j), 
\]
where $\Xi_{L/ k, S}^{\loc} (j)$ is defined in Definition \ref{def:Xi_loc} and the projective limit is taken with respect to 
the canonical transition map in \S \ref{compatibility}. 
We put $r_k := [k : \Q]$. 

\begin{thm}. \label{thm:local_ETNC}
Recall that we assume that $p$ is unramified in $K / \Q$. 
Then, for any $j \in \Z_{\leq 0}$, 
there exist a (unique) $\varepsilon_{1-j} \Lambda$-basis 
\[
\varepsilon_{1-j} Z^{\loc, j}_{K_\infty / k, S} \in \varepsilon_{1-j} \Xi_{K_\infty / k, S}^{\loc} (j)
\]
 
such that the following statements hold. 

\begin{itemize}
\item[(1)] For any $L \in \Omega (K_\infty / k)$, 
the following composite map 
\[
\varepsilon_{1-j} \Xi_{K_\infty / k, S}^{\loc} (j) \twoheadrightarrow \varepsilon_{1-j} \Xi_{L / k, S}^{\loc} (j)
\overset{\Phi_{K/k, S}^{\loc, j}} {\hookrightarrow} \varepsilon_{1-j} \C_p [\GG_L]
\]
sends $\varepsilon_{1-j} Z^{\loc, j}_{K_\infty / k, S}$ to 
\[
(-1)^{r_k j} \times 
\begin{cases}
 \cfrac{\varepsilon_{1-j} \Theta_{L/k, S} (1-j)^\#}{\varepsilon_{1-j} \Theta_{L/k, S}(j)}
&\text{ if }j<0, 
\\
 (\prod_{v \in S_f} \delta_v (L/k))^{-1} \cdot  
\cfrac{\varepsilon_{1} \Theta_{L/k, S}(1)^\#}{\varepsilon_{1} \Theta_{L/k}(0)}
&\text{ if }j=0. 
\end{cases}
\]

\item[(2)] 
For any non-positive integers $j$ and $j'$, we have 
\[
\Twist_{j, j'}^{\loc} (\varepsilon_{1-j} Z^{\loc, j}_{K_\infty / k, S}) = \varepsilon_{1-j'} Z^{\loc, j'}_{K_\infty / k, S},  
\]
where 
$\Twist_{j, j'}^{\loc} : \varepsilon_{1-j}\Xi_{K_\infty / k, S}^{\loc} (j) 
\xrightarrow{\sim}
\varepsilon_{1-j'} \Xi_{K_\infty / k, S}^{\loc} (j')$ is defined in \eqref{loc_Twist} in \S \ref{ss:twist_def}. 
\end{itemize}
\end{thm}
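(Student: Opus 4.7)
The strategy is to construct $\varepsilon_{1-j} Z^{\loc, j}_{K_\infty / k, S}$ by assembling local $\varepsilon$-isomorphisms at each finite prime of $k$ in $S$, following the Fukaya--Kato formulation of the local Tamagawa number conjecture. The local piece at a prime $v$ provides a canonical trivialization of $\Det_\Lambda^{-1} \RG(K_\infty \otimes k_v, \Z_p(j))$ whose specialization at a finite layer $L$ records, up to period factors, the local Euler factor and, when $v \mid p$, the Bloch--Kato exponential period. For $v \nmid p$ this is classical and essentially given by the determinant of $1 - \Frob_v N(v)^{-j}$ on inertia invariants. For $v \mid p$, the hypothesis that $k/\Q$ is unramified at $p$ forces $\Z_p(j)$ to be a crystalline $G_{k_v}$-representation with an unramified filtered $\varphi$-module, and one would invoke the equivariant local $\varepsilon$-conjecture for such representations, i.e.\ an equivariant refinement of Perrin-Riou's exponential. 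Tensoring these local bases together with the canonical $\Z_p$-basis $\{e_\iota^j\}$ of $X_{K_\infty}(j)$ produces a candidate, from which one extracts the $\varepsilon_{1-j}$-component.

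Granting this construction, property (2) should hold essentially by design: each local $\varepsilon$-basis is functorial under twist by the cyclotomic character, and $\Twist_{j, j'}^{\loc}$ is defined in \S\ref{ss:twist_def} precisely to record this functoriality. It therefore suffices to verify (1) at a single value of $j$; the natural choice is $j = 0$, where the Euler factor at $v \nmid p$ degenerates cleanly. Concretely, one projects to a finite layer $L \in \Omega(K_\infty / k)$, unwinds $\Phi_{L/k, S}^{\loc, 0}$ character by character, and matches the result with the equivariant functional equation $L(\chi, s) \leftrightarrow L(\bar\chi, 1-s)$: the factors at $v \nmid p$ supply the Euler factor ratios in $\Theta_{L/k, S}(1)^\# / \Theta_{L/k}(0)$, the contribution at $v \mid p$ enters through the dual exponential $\exp_{\Q_p}^\ast$ and the reciprocity map of \eqref{rec_v}, and the archimedean renormalization built into \eqref{2zure} supplies the $\Gamma$-factor quotient. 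The correction $\prod_{v \in S_f} \delta_v(L/k)$ at $j = 0$ reflects the identification of $H^0(K_v, \Q_p)$ with $H^1(K_v, \Q_p)$ through the reciprocity map, while the sign $(-1)^{r_k j}$ for $j < 0$ emerges from the archimedean renormalization via $\alpha^{1-j}_K$ together with the $(-1)^{[k:\Q]}$ discrepancy recorded in Remark \ref{rem:sign}.

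The main obstacle, by a wide margin, is the step at $v \mid p$. Promoting the classical Perrin-Riou / Coleman construction to an equivariant $\Lambda$-valued $\varepsilon$-isomorphism and pinning down the exact match with the prescribed $L$-value ratio is delicate: the Frobenius eigenvalues, the precise normalization of $\exp_{\Q_p}^\ast$, and the interaction with the idempotent $\varepsilon_{1-j}$ must conspire to produce the $\delta_v$ correction and the sign $(-1)^{r_k j}$ on the nose. The unramified hypothesis on $p$ is used precisely here, to keep the crystalline machinery manageable via the Frobenius on the Witt vectors of the residue field of $k_v$; relaxing it would require genuinely new $p$-adic input, which is why the hypothesis propagates to Theorem \ref{thm:zeta>0}.
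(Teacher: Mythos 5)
There is a genuine gap. The paper does not prove this theorem from scratch: its proof is a direct citation of the main results of \cite{ADK} (claim (1) from \cite[Theorem 8.3]{ADK}, claim (2) from \cite[Proposition 8.6]{ADK}), together with a careful sign comparison showing that $\Phi_{K/k,S}^{\loc,j}$ differs from the map $\vartheta_{K/k,S}^{\loc,j}$ of \cite{ADK} by $(-1)^{[k:\Q]}$ (Remark \ref{rem:sign}) and that $\varepsilon_{1-j}\cdot c_{L/k}^{\infty}=(-1)^{r_k(j-1)}\varepsilon_{1-j}$, which is where the factor $(-1)^{r_k j}$ comes from. Your proposal instead sketches a from-scratch proof of the local equivariant Tamagawa number conjecture, and the decisive step --- the construction of an equivariant $\Lambda$-level $\varepsilon$-isomorphism at the primes above $p$ with exactly the stated interpolation property (Euler factors, $\delta_v$-corrections, the precise sign) --- is exactly the content of \cite[Theorem 8.3]{ADK} that you acknowledge as ``delicate'' but do not carry out. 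Saying ``one would invoke the equivariant local $\varepsilon$-conjecture'' is assuming the statement to be proved, since in this unramified crystalline setting that conjecture \emph{is} the theorem in question; no argument (Coleman map, Perrin-Riou exponential, explicit reciprocity law, or otherwise) is actually given.

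A secondary problem is your claimed reduction of (1) to the single case $j=0$ via (2). The twist map $\Twist_{j,j'}^{\loc}$ is a $\twist_{j-j'}$-semilinear isomorphism at the level of $\Lambda$, and its interaction with the finite-layer evaluation maps $\Phi_{L/k,S}^{\loc,j}$ is not a formal consequence of ``functoriality under twist'': specializing the twisted basis at a layer $L$ is not determined by the specialization of the untwisted basis at the same layer, and transferring an interpolation formula from $j=0$ to $j<0$ requires either an explicit computation of how the twist interacts with the dual exponential maps at each layer or a density/interpolation argument of the kind the paper uses elsewhere (compare the proof of Theorem \ref{thm:zeta<1} via Proposition \ref{prop:zeta=p_adic_L} and the Deligne--Ribet $p$-adic $L$-function). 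As written, neither ingredient is supplied, so even granting a local $\varepsilon$-isomorphism at $p$ the proposed verification of (1) for all $j\le 0$ is incomplete.
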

\begin{proof}
The claim (1) follows from \cite[Theorem 8.3]{ADK}. 
Here we note that $\varepsilon_{1-j} \cdot c_{L / k}^{\infty} = \varepsilon_{1-j} \cdot  c^{r_k} = (-1)^{r_k (j-1)} \cdot \varepsilon_{1-j} $
($c_{L / k}^{\infty} \in \GG_L$ is defined in loc.sit.) and the map $\Phi_{K/k, S}^{\loc, j} $ is 
$(-1)^{r_k}$ times the map $\vartheta_{K/k, S}^{\loc, j} $ in \cite[Theorem 8.3]{ADK} (see Remark \ref{rem:sign}). 
Therefore, in the above equation, $(-1)^{r_k j}$ appears. 

The claim (2) follows from \cite[Proposition 8.6]{ADK}. 
\end{proof}

For any $j \in \Z_{\geq 1}$, we consider the Iwasawa cohomology complex
$\RG_{\Iw} (G_{K_\infty, S} , \Z_p (1-j))$ 
which is defined by the projective imit of $\RG (G_{L, S} , \Z_p (1-j))$ for $L \in \Omega (K_\infty / k)$. 
Then, we define the graded invertible $\Lambda$-module 
\[
\Xi_{K_\infty / k, S} (j) := \Det_{\Lambda}^{-1} (\RG_{\Iw} (G_{K_\infty, S} , \Z_p (1-j))) 
\otimes_{\Lambda} \Det_{\Lambda}^{-1} (X_{K_\infty} (-j)^+), 
\]
where we put $X_{K_\infty}(-j)^+ := \varprojlim_{L \in \Omega (K_\infty / k)} X_L (-j)^+$. 
Using the bases in Theorem \ref{thm:zeta<1} and Theorem \ref{thm:local_ETNC}, 
we define a $\varepsilon_j \Lambda$-basis of $\varepsilon_j \Xi_{K_\infty / k, S} (j)$ for any $j \in \Z_{\geq 1}$ as follows. 

Note that we have a canonical isomorphism 
\[
\Xi_{K_\infty / k, S} (j) \simeq \varprojlim_{L \in \Omega (K_\infty / k)} \Xi_{L / k, S} (j).  
\]
Similarly, there are canonical isomorphisms as above for $\Xi_{K_\infty / k, S}^{\loc} (j)$ and $\Det_{\Lambda}^{-1}(\Delta_{K_\infty , S} (j))$. 
Thus, by Lemma \ref{lem:theta_func}, 
the isomorphism $\theta_{L, S}^{1-j}$ in Definition \ref{defn:AV} induces an isomorphism
\[
\theta_{K_\infty, S}^{1-j} %:= \varprojlim_{L \in \Omega (K_\infty / k)} \theta_{L, S}^{1-j} 
: 
\varepsilon_j \Xi_{K_\infty / k, S} (j) \xrightarrow{\sim}
\varepsilon_j\Det_{\Lambda}^{-1}(\Delta_{K_\infty , S} (1-j))
\otimes_{\varepsilon_j \Lambda}
\varepsilon_j \Xi_{K_\infty / k, S}^{\loc} (1-j)
\]
for any $j \in \Z_{\geq 1}$. 
Using this,  
we define a $\varepsilon_j \Lambda$-basis of $\varepsilon_j \Xi_{K_\infty / k, S} (j)$ as follows. 

\begin{defn}\label{def:even zeta element}
For any $j \in \Z_{\geq 1}$, 
we define a $\varepsilon_j \Lambda$-basis 
$\varepsilon_j Z_{K_\infty/k, S}^{j} \in \varepsilon_j \Xi_{K_\infty / k, S} (j)$ by 
\[
\varepsilon_j Z_{K_\infty /k, S}^{j} := 
%(-1)^{r_k} \cdot 
(\theta_{K_\infty, S}^{1-j})^{-1} \parenth{\varepsilon_j Z_{K_\infty/k, S}^{1-j} \otimes 
\varepsilon_j Z_{K_\infty / k, S}^{\loc, 1-j}}, 
\]
where $\varepsilon_j Z_{K_\infty/k, S}^{1-j}$ is the basis in Theorem \ref{thm:zeta<1} and   
$\varepsilon_j Z_{K_\infty / k, S}^{\loc, 1-j}$ is the one in Theorem \ref{thm:local_ETNC}. 
\end{defn}

%%%%%%%%%%%%%%%%%%%%%%%%%%%%%%%%%%%%%%%%%%%%%%%
\subsection{Proof of Theorem \ref{thm:zeta>0}}
%%%%%%%%%%%%%%%%%%%%%%%%%%%%%%%%%%%%%%%%%%%%%%%
In this subsection, we prove Theorem \ref{thm:zeta>0}. 
\begin{proof}[Proof of Theorem \ref{thm:zeta>0}]
By Theorem \ref{thm: comm of period reg}, 
the following diagram is commutative:
%up to sign $(-1)^{r_k}$

\[
\xymatrix{ 
\varepsilon_j \Xi_{K_\infty / k, S} (j) 
\ar[r]_-{\sim}^-{\theta_{K_\infty , S}^{1-j}}
\ar@{->>}[d]
&
\varepsilon_j \Det_{\Lambda}^{-1}(\Delta_{K_\infty , S} (1-j))
\otimes_{\varepsilon_j \Lambda}
\varepsilon_j \Xi_{K_\infty / k, S}^{\loc} (1-j)
\ar@{->>}[d]
\\
\varepsilon_j\Xi_{L/ k, S} (j) 
\ar[r]_-{\sim}^-{\theta_{L, S}^{1-j}}
\ar[d]^-{\vartheta_{L/k, S}^j}
&
\varepsilon_j\Det_{\Z_p[\GG_L]}^{-1}(\Delta_{L , S} (1-j))
\otimes_{\varepsilon_j \Z_p[\GG_L]}
\varepsilon_j\Xi_{L / k, S}^{\loc} (1-j)
\ar[d]^-{\vartheta_{L/k, S}^{1-j} \otimes \Phi_{L / k, S}^{\loc, 1-j}}
\\
\varepsilon_j\C_p [\GG_L]
&
\ar[l]_-{ab \mapsfrom a \otimes b}^-{\sim}
\varepsilon_j\C_p [\GG_L]
\otimes_{\varepsilon_j\C_p [\GG_L]}
\varepsilon_j\C_p [\GG_L]
}
\]
for any $L \in \Omega (K_\infty / k)$. 
By Theorem \ref{thm:zeta<1} (1) and Theorem \ref{thm:local_ETNC} (1),
the right vertical composite map sends 
$\varepsilon_jZ_{K_\infty/k, S}^{1-j} \otimes 
\varepsilon_jZ_{K_\infty / k}^{\loc, 1-j}$ to 
\[
\begin{cases}\displaystyle
\varepsilon_j \Theta_{L/k , S} (1-j) \otimes
(-1)^{r_k (j-1)} 
\cfrac{\varepsilon_j  \Theta_{L/k , S} (j)^\#}{\varepsilon_j  \Theta_{L/k , S} (1-j)}
& \text{if }  j>1,    
\\ \displaystyle
\Big{(} \prod_{v \in S_f } \delta_v(L/k) \Big{)}
\varepsilon_1\Theta_{L/k} (0) \otimes 
\Big{(} \prod_{v \in S_f } (\delta_v(L/k) \Big{)^{-1}} \cdot
\cfrac{\varepsilon_1 \Theta_{L/k , S} (1)^\#}{\varepsilon_1 \Theta_{L/k} (0)}
& \text{if }  j=1.  
\end{cases}
\]
This implies Theorem \ref{thm:zeta>0} (1). 

 On the other hand, by Definition \ref{defn:AV} and the diagram \eqref{triangle_twist}, 
 we have the following commutative diagram: 
 \[
\xymatrix{ 
\varepsilon_j\Xi_{K_\infty / k, S} (j) 
\ar[r]_-{\sim}^-{\theta_{K_\infty , S}^{1-j}}
\ar[d]^-{\sim}_-{\Twist_{1-j, 1-j'}^\Xi}
&
\varepsilon_j\Det_{\Lambda}^{-1}(\Delta_{K_\infty , S} (1-j))
\otimes_{\varepsilon_j \Lambda}
\varepsilon_j\Xi_{K_\infty / k, S}^{\loc} (1-j)
\ar[d]^-{\sim}_-{\Twist_{1-j, 1-j'}^\Delta \otimes \Twist_{1-j, 1-j'}^{\loc}}
\\
\varepsilon_{j'}\Xi_{K_\infty / k, S} (j') 
\ar[r]_-{\sim}^-{\theta_{K_\infty , S}^{1-j'}}
&
\varepsilon_{j'}\Det_{\Lambda}^{-1}(\Delta_{K_\infty , S} (1-j'))
\otimes_{\varepsilon_{j'} \Lambda}
\varepsilon_{j'} \Xi_{K_\infty / k, S}^{\loc} (1-j'). 
}
\]
The claim (2) in Theorem \ref{thm:zeta>0} follows from this diagram, Theorem \ref{thm:zeta<1} (2), and Theorem \ref{thm:local_ETNC} (2). 
This completes the proof of Theorem \ref{thm:zeta>0}. 
\end{proof}

%%%%%%%%%%%%%%%%%%%%%%%%%%%%%%%%%%%%%%%%%%%%5
\section{Generalized Stark element}\label{section:eulersystem}
%%%%%%%%%%%%%%%%%%%%%%%%%%%%%%%%%%%%%%%%%%%%%%%%%%%%%%%%%%%%
We keep the notation in the previous section. 
In this section, %as an application of Theorem \ref{thm:zeta>0}, 
for any $j \in \Z_{\geq 1}$ and $L \in \Omega (K_\infty / k)$, 
we define an element 
$\eta_{L, S}^j $ which belongs to the exterior power bidual of the cohomology group
 $H^1 (G_{L , S} , \Z_p(1-j))$ 
 using Theorem \ref{thm:zeta>0}. 

Recall that $p$ is an odd prime and $K/k$ is a finite abelian CM-extension such that $p$ is unramified in $K / \Q$. 
We set $K_\infty := \cup_{n \geq 0} K (\mu_{p^n})$. 
We fix a finite set $S$ of places of $k$ which contains $S_p \cup S_{\ram} (K/k) \cup S_\infty$. 

Throughout this section, we fix $L \in \Omega (K_\infty / k)$. 
Namely, $L$ is an intermediate field of $K_\infty / k$ such that $L$ is a CM-field and $[L : k] <\infty$. 
Put $r_k := [k : \Q]$ and $\GG_L := \Gal (L/k)$. 

We fix a numbering of embeddings
$\Sigma_{k} := \{ \iota : k \hookrightarrow \R \} = \{\iota_1 , \dots , \iota_{r_k} \}$. 
We also fix a lift $\iota_{L, i} : L \hookrightarrow \C$ of $\iota_i$ for each $1 \leq i \leq r_k$. 
Then, we obtain an ordered $\Z_p[\GG_L]$-basis $\{ e_{\iota_{L, 1}}^j, \dots , e_{\iota_{L, r_k}}^j \}$ of 
$X_L (j)$ for any integer $j$, 
where $X_L (j)$ and $e_{\iota_{L, i}}^j$ is defined in \S \ref{ss:notation}. 

%%%%%%%%%%%%%%%%%%%%%%%%%%%%%%%%%%%%%%%%%%%%5
\subsection{Definition of $\eta_{L, S}^j $}\label{ss:def_eta}
%%%%%%%%%%%%%%%%%%%%%%%%%%%%%%%%%%%%%%%%%%%%%%%%%%%%%%%%%%%%
For any commutative ring $R$, any $R$-module $M$, and any positive integer $r$, 
we define the $r$-th exterior power bidual of $M$ by
\[
\bigcap_R^r M := 
\Hom_R (\bigwedge_R^r \Hom_R (M, R) , R). 
\]
The goal of this subsection is to define an element 
\[
\eta_{L, S}^j \in \varepsilon_j\bigcap_{\Z_p[\GG_{L}]}^{r_k}  H^1 (G_{L , S(L)} , \Z_p(1-j))
\] 
for any $j \in \Z_{\geq 1}$. 

First, we give an elementary lemma. 
\begin{lem}\label{lem:H^1torsion-free}
For any $j \in \Z_{\geq 1}$, 
$\varepsilon_j H^1 (G_{L , S} , \Z_p(1-j))$ is $\Z_p$-torsion free. 
\end{lem}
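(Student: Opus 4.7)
The plan is to control the $\Z_p$-torsion of $H^1(G_{L,S}, \Z_p(1-j))$ via the Kummer-type short exact sequence
\[
0 \to \Z_p(1-j) \to \Q_p(1-j) \to \Q_p/\Z_p(1-j) \to 0,
\]
and then argue that the idempotent $\varepsilon_j$ kills the entire torsion submodule by a direct computation with the action of complex conjugation.

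First I would take the associated long exact sequence of $G_{L,S}$-cohomology. Since $\Q_p(1-j)$ is $\Z_p$-torsion free, the torsion part of $H^1(G_{L,S}, \Z_p(1-j))$ equals the cokernel of the natural map $H^0(G_{L,S}, \Q_p(1-j)) \to H^0(G_{L,S}, \Q_p/\Z_p(1-j))$; in particular it is a subquotient of $H^0(G_{L,S}, \Q_p/\Z_p(1-j)) = (\Q_p/\Z_p)(1-j)^{G_L}$. It therefore suffices to show that $\varepsilon_j$ annihilates this $G_L$-invariant submodule.

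Next I would split into two cases. When $j=1$, the coefficient module is $\Q_p/\Z_p$ with trivial Galois action, and the map $H^0(G_L,\Q_p) = \Q_p \to \Q_p/\Z_p$ is surjective, so the torsion already vanishes before taking $\varepsilon_1$. When $j \geq 2$, one has $H^0(G_L, \Q_p(1-j)) = 0$, so the torsion coincides with $(\Q_p/\Z_p)(1-j)^{G_L}$. On this module the complex conjugation $c \in \GG_L$ acts via $\chi_{\cyc}^{1-j}(c) = (-1)^{1-j}$, so the action of $\varepsilon_j = (1+(-1)^j c)/2$ is
\[
\tfrac{1}{2}\bigl(1 + (-1)^j \cdot (-1)^{1-j}\bigr) = \tfrac{1}{2}(1-1) = 0.
\]
Hence $\varepsilon_j$ annihilates the torsion subgroup and $\varepsilon_j H^1(G_{L,S}, \Z_p(1-j))$ is $\Z_p$-torsion free, as required.

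I do not expect a serious obstacle here: the key point is simply to match the convention for $\varepsilon_j$ with the $(-1)^{1-j}$ sign coming from the twist, and to observe that the torsion is controlled by a $G_L$-invariant inside a Tate twist of $\Q_p/\Z_p$. The only mild care needed is in the $j=1$ case, where the reason the torsion vanishes is structural (surjectivity of $\Q_p \twoheadrightarrow \Q_p/\Z_p$) rather than a sign cancellation.
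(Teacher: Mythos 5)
Your proof is correct and follows essentially the same route as the paper: the paper uses the multiplication-by-$p$ sequence $0 \to \Z_p(1-j) \xrightarrow{\times p} \Z_p(1-j) \to \Z/p\Z(1-j) \to 0$ to present the $p$-torsion of $\varepsilon_j H^1(G_{L,S},\Z_p(1-j))$ as a quotient of $\varepsilon_j H^0(G_{L,S},\Z/p\Z(1-j))$, which vanishes by exactly your sign computation $(-1)^j\cdot(-1)^{1-j}=-1$. In particular your case split at $j=1$ is unnecessary, since the same computation already kills the $\varepsilon_1$-part of the $G_L$-invariants of $\Q_p/\Z_p$ (trivial $c$-action against $\varepsilon_1=\tfrac{1-c}{2}$), so the uniform sign argument suffices in your setting as well.
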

\begin{proof}
Consider the following short exact sequence 
\[
0 \to \Z_p (1-j) \xrightarrow{\times p} \Z_p (1-j) \to \Z / p \Z (1-j) \to 0.   
\]
Taking the Galois cohomology, we get a surjective homomorphism 
\[
\varepsilon_jH^0 (G_{L, S} , \Z / p \Z (1-j)) \twoheadrightarrow  \varepsilon_jH^1 (G_{L, S} , \Z_p (1-j)) [p], 
\]
where we write $(-)[p]$ for the $p$-torsion part of $(-)$. 
Clearly, we know that the module $\varepsilon_jH^0 (G_{L, S} , \Z / p \Z (1-j))$ vanishes. 
This implies the claim. 
\end{proof}

We consider the following composite map: 
\begin{align*}
\Pi_{L}^j : 
%\varepsilon_j\Xi_{K_\infty / k , S} (j) 
%\twoheadrightarrow 
\varepsilon_j\Xi_{L / k , S} (j)  
&\hookrightarrow
\Q_p \otimes_{\Z_p} \varepsilon_j\Xi_{L / k , S} (j) 
\\
&\xrightarrow{\sim} 
\varepsilon_j\Det_{\Q_p [\GG_L]} (H^1 (G_{L, S} , \Q_p (1-j))) 
\otimes_{\varepsilon_j\Z_p[\GG_L]} \varepsilon_j\Det_{\Z_p[\GG_L]}^{-1} (X_K (-j)^+) 
\\
&\xrightarrow{\sim} 
\varepsilon_j\Det_{\Q_p [\GG_L]} (H^1 (G_{L, S} , \Q_p (1-j))) 
= \varepsilon_j \bigwedge_{\Q_p [\GG_L]}^{r_k} (H^1 (G_{L, S} , \Q_p (1-j))), 
\end{align*}
%where the first surjection is the canonical transition map, 
where the first isomorphism is induced by Lemma \ref{lem:complex} (3) 
and the second isomorphism is defined by 
\[
x \otimes \varepsilon_j(\wedge_{i=1}^{r_k} e_{\iota_{L, i}}^{-j})^\ast \mapsto x 
\]
($(-)^\ast$ means the dual basis). 

On the other hand, for any $\Z_p[\GG_L]$-module $M$ and $r \in \Z_{\geq 1}$, 
the following natural map 
\begin{equation}\label{wedge}
\bigwedge_{\Z_p[\GG_{L}]}^{r} M 
\to \bigcap_{\Z_p[\GG_{L}]}^{r} M ; \;
x \mapsto (f \mapsto f(x))
\end{equation}
induces an isomorphism 
$
\Q_p \otimes_{\Z_p} \bigwedge_{\Z_p[\GG_{L}]}^{r} M 
\simeq \Q_p \otimes_{\Z_p} \bigcap_{\Z_p[\GG_{L}]}^{r} M 
(\supset \bigcap_{\Z_p[\GG_{L}]}^{r} M). $
Hence, we may regard 
$\bigcap_{\Z_p[\GG_{L}]}^{r} M 
\subset \Q_p \otimes_{\Z_p}  \bigwedge_{\Z_p[\GG_{L}]}^{r} M  
= \bigwedge_{\Q_p[\GG_{L}]}^{r} (\Q_p \otimes_{\Z_p} M )$. 
Note that for any finitely generated $\Z_p[\GG_L]$-module $M$, 
\eqref{wedge} induces an isomorphism 
\[
\bigcap_{\Z_p[\GG_L]}^{r} M \simeq 
\{ 
x \in \bigwedge_{\Z_p[\GG_L]}^{r} \Q_p \otimes_{\Z_p} M \mid 
f (x) \in \Z_p[\GG_L] \text{ for any } f \in \bigwedge_{\Z_p[\GG_L]}^{r}
\Hom_{\Z_p[\GG_L]} (M , \Z_p [\GG_L]) 
\}
\]
(see \cite[Proposition A.8]{BS19}). 
%and 
%$\eta_L^j \in \varepsilon_j \bigcap_{\Z_p[\GG_L]}^{r_k} H^1 (G_{L , S} , \Z_p(1-j)) $, 
Therefore, for any element $a \in \varepsilon_j \bigcap_{\Z_p[\GG_L]}^{r} M $, 
we can define an ideal ${\rm im } (a) \subset  \varepsilon_j \Z_p [\GG_L]$ by 
\[
{\rm im } (a) := 
\Big{\langle} f (a) \mid f \in \varepsilon_j \bigwedge_{\Z_p[\GG_L]}^{r_k} 
\Hom_{\Z_p[\GG_L]}( M , \Z_p [\GG_L] ) 
\Big{\rangle}_{\varepsilon_j\Z_p [\GG_L]}. 
\]
 
\begin{prop}\label{prop:image_Pi}
For any $j \in \Z_{\geq 1}$, the following statements hold. 
\begin{itemize}
\item[(i)]
We have 
$\Imag \Pi_L^j \subset \varepsilon_j\bigcap_{\Z_p[\GG_{L}]}^{r_k}  H^1 (G_{L , S} , \Z_p(1-j))$. 
\item[(ii)] For any $\varepsilon_j \Z_p[\GG_L]$-basis $z \in \varepsilon_j \Xi_{L/k , S} (j)$, we have 
\[
{\rm im } ( \Pi_{L}^j (z)) = \varepsilon_j\Fitt_{\Z_p[\GG_L]}^0 (H^2 (G_{L, S} , \Z_p (1-j))). 
\]
\end{itemize}
\end{prop}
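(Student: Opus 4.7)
The plan is to reduce both parts to a standard computation with a ``quadratic presentation'' of $\varepsilon_j\RG(G_{L,S}, \Z_p(1-j))$, in the spirit of the arguments in \cite{BKS20}.

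First, I will establish that $\varepsilon_j\RG(G_{L,S}, \Z_p(1-j))$ is a perfect complex over $R := \varepsilon_j\Z_p[\GG_L]$ whose cohomology is concentrated in degrees $1$ and $2$: indeed, the complex conjugation acts on $\Z_p(1-j)$ via $(-1)^{1-j}$, so $\varepsilon_j H^0 = 0$, and higher cohomology vanishes by cohomological dimension. Hence one may represent it by a two-term complex $C := [F^1 \xrightarrow{\psi} F^2]$ of finitely generated free $R$-modules in degrees $1, 2$. Writing $Q := \varepsilon_j\Q_p[\GG_L]$, Lemma~\ref{lem:complex}(3) gives $H^2(C) \otimes_R Q = 0$, so setting $a := \rank_R F^1$ and $b := \rank_R F^2$ yields $a - b = r_k$.

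Next, I will fix $R$-bases $\{f_1, \ldots, f_a\}$ of $F^1$ and $\{g_1, \ldots, g_b\}$ of $F^2$ and work with the resulting $R$-basis $z_0$ of $\varepsilon_j\Xi_{L/k,S}(j)$ obtained by combining $(f_1 \wedge \cdots \wedge f_a) \otimes (g_1 \wedge \cdots \wedge g_b)^{\ast}$ with the fixed trivialization of $\Det_R^{-1}(X_L(-j)^+)$ by $\varepsilon_j (e_{\iota_{L,1}}^{-j} \wedge \cdots \wedge e_{\iota_{L,r_k}}^{-j})^{\ast}$. Since any other $R$-basis $z$ differs from $z_0$ by a unit of $R$, it suffices to verify (i) and (ii) for $z_0$. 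By tracing the definition of $\Pi_L^j$ through the isomorphism of Lemma~\ref{lem:complex}(3), I will identify $\Pi_L^j(z_0)$ explicitly: choose lifts $\tilde{g}_i \in F^1 \otimes_R Q$ with $\psi(\tilde{g}_i) = g_i$ and any $Q$-basis $h_1, \ldots, h_{r_k}$ of $H^1(C) \otimes_R Q = \varepsilon_j H^1(G_{L,S}, \Q_p(1-j))$; then $\Pi_L^j(z_0) = \lambda \cdot (h_1 \wedge \cdots \wedge h_{r_k})$, where $\lambda \in Q^{\times}$ is determined by the identity $f_1 \wedge \cdots \wedge f_a = \lambda \cdot (h_1 \wedge \cdots \wedge h_{r_k} \wedge \tilde{g}_1 \wedge \cdots \wedge \tilde{g}_b)$ in $\bigwedge_Q^{a} (F^1 \otimes_R Q)$.

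With this explicit description, both (i) and (ii) will follow from a Cramer-type identity. For any $\phi = \phi_1 \wedge \cdots \wedge \phi_{r_k} \in \bigwedge_R^{r_k} \Hom_R(H^1(C), R)$, I will compute $\phi(\Pi_L^j(z_0))$ as an explicit $R$-linear combination of $b \times b$ minors of the matrix of $\psi$, and conversely show that every such minor arises as $\phi(\Pi_L^j(z_0))$ for a suitable $\phi$. Since the $b \times b$ minors of $\psi$ generate $\Fitt_R^0(\Coker\psi) = \Fitt_R^0(H^2(C)) = \varepsilon_j\Fitt_{\Z_p[\GG_L]}^0(H^2(G_{L,S}, \Z_p(1-j)))$, this simultaneously establishes the integrality of $\Pi_L^j(z_0)$ in the bidual (part (i)) and the equality of ideals (part (ii)).

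The principal obstacle will be the careful tracking of signs and conventions inherent in the definition of $\Det_R^{-1}$ for complexes concentrated in degrees $[1, 2]$ and the resulting Cramer-type formula for $\phi(\Pi_L^j(z_0))$. Although this bookkeeping is intricate, the underlying computation is a standard one in higher-rank Euler system theory (compare, for instance, \cite{BKS20} and \cite{BS19}) and can be adapted to the present setting with minimal modification.
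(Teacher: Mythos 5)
Your overall strategy is the same as the paper's: reduce to a presentation of the cohomology by a two-term complex of free modules and then run the standard exterior-power/Fitting-ideal computation (which the paper does not redo by hand but imports from the appendix of \cite{BS19}, namely Proposition A.11 and Lemma A.7(iii)). However, your very first structural step has a genuine gap. You claim that since $\varepsilon_j H^0(G_{L,S},\Z_p(1-j))=0$ and the cohomological dimension is $2$, the perfect complex $\varepsilon_j\RG(G_{L,S},\Z_p(1-j))$ may be represented by a complex $[F^1\xrightarrow{\psi}F^2]$ of finitely generated free modules placed in degrees $1$ and $2$. Perfectness plus cohomological concentration in degrees $1,2$ does \emph{not} imply this: for any representative $[P^1\to P^2]$ by projectives in degrees $1,2$ one has $H^1=\ker(P^1\to P^2)\subset P^1$, so the bottom cohomology of such a representative is automatically $\Z_p$-torsion-free. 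For instance, $R/p$ placed in degree $1$ (for $R=\varepsilon_j\Z_p[\GG_L]$) is a perfect complex with cohomology concentrated in degrees $1,2$ admitting no such representative. The missing input is precisely the $\Z_p$-torsion-freeness of $\varepsilon_j H^1(G_{L,S},\Z_p(1-j))$, i.e.\ Lemma \ref{lem:H^1torsion-free}, which is exactly the admissibility condition of \cite[Definition 2.20]{BS19} that the paper verifies (its condition (4)) before invoking \cite[Proposition A.11(i)]{BS19}; with it, the standard argument (finite projective dimension plus $\Z_p$-torsion-freeness of the relevant cokernel forces projectivity over the group ring) does give your $[F^1\to F^2]$.

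Once that lemma is inserted, the rest of your plan is sound and is essentially the content of the cited appendix results: your $\lambda$-description of $\Pi_L^j(z_0)$, the expansion of $\phi(\Pi_L^j(z_0))$ in terms of maximal minors of $\psi$, and the converse realization of each minor via restrictions of dual-basis functionals. Two small points you should make explicit there: (a) for the forward inclusion and for integrality you need every $\phi_i\in\Hom_R(H^1,R)$ to lift to $\Hom_R(F^1,R)$, which holds because $F^1/\ker\psi$ embeds into $F^2$ and hence is $\Z_p$-torsion-free, so $\Hom_R(F^1,R)\to\Hom_R(\ker\psi,R)$ is surjective (this is the role of the surjectivity step in the paper's proof); and (b) your trivialization of $\Det_R^{-1}(X_L(-j)^+)$ by the fixed basis replaces the paper's device of adjoining $X_L(-j)^+[-1]$ to the complex, which is why the paper compares the $r_k$-th Fitting ideal of $H^1(C(j))$ with $\Fitt^0$ of $H^2$, whereas you work directly with maximal minors of $\psi$; the two bookkeepings agree, and the sign ambiguities you mention are harmless since they only change $\Pi_L^j(z)$ by a unit.
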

\begin{proof}
We consider the complex $C(j) := \varepsilon_j \RG  (G_{L , S} , \Z_p(1-j))[1] \oplus X_K (-j)^+ [-1]$. 
We claim that the following hold for any $j \in \Z_{\geq 1}$:   
\begin{itemize}
\item[(1)] $C(j)$ is a perfect complex of $\varepsilon_j\Z_p [\GG_L]$-modules. 
\item[(2)] The Euler characteristic of $\Q_p \otimesL_{\Z_p}C(j)$ is zero. 
\item[(3)] $C(j)$ is acyclic outside degrees zero and one. 
\item[(4)] $H^0(C(j))$ is $\Z_p$-free. 
\end{itemize}
The claim (1) is well known. 
Claims (2) and (3) follow from Lemma \ref{lem:complex} (3) and the isomorphism $\lambda_{L}^j$ 
in Definition \ref{def:period_reg>0}. 
The claim (4) follows from Lemma \ref{lem:H^1torsion-free}. 

Therefore, the complex $C(j)$ satisfies the conditions \cite[Definition 2.20]{BS19} as $\mathcal{R} =\varepsilon_j \Z_p[\GG_L]$. 
Thus, Proposition \ref{prop:image_Pi} (i) follows from \cite[Proposition A.11 (ii)]{BS19}. 

Moreover, by \cite[Proposition A.11 (i)]{BS19}, 
we may assume that $C(j) = [P' \to P]$, where $P'$ and $P$ is a free $\varepsilon_j \Z_p[\GG_L]$-module of rank $d \geq r_k$ 
and $P'$ is placed in degree zero. 
Namely, we have the following exact sequence 
\[
0 \to H^0 (C (j)) \xrightarrow{f} P' \to P \to H^1 (C(j)) \to 0. 
\]
Since the cokernel of $f$ is $\Z_p$-free, 
the dual map of $f$ 
 \[
 \Hom_{\varepsilon_j\Z_p[\GG_L]}(P' , \varepsilon_j\Z_p[\GG_L]) \to
 \Hom_{\varepsilon_j\Z_p[\GG_L]} (H^0 (C(j)) , \varepsilon_j\Z_p[\GG_L])\]
 is surjective. 
 Thus, we can regard as  
$\varepsilon_j\bigcap_{\Z_p[\GG_{L}]}^{r_k}  H^1 (G_{L , S} , \Z_p(1-j)) =
\varepsilon_j\bigcap_{\Z_p[\GG_{L}]}^{r_k}  H^0 (C(j)) \subset \varepsilon_j\bigcap_{\Z_p[\GG_{L}]}^{r_k} P'$. 
Then, by \cite[Lemma A.7 (iii)]{BS19}, we have 
\begin{align*}
&\Fitt_{\varepsilon_j \Z_p[\GG_L]}^{r_k} (H^1 (C(j))) 
= \varepsilon_j \Fitt_{\Z_p[\GG_L]}^{0} (H^2 (G_{L, S} , \Z_p (1-j))) 
\\
&= \Big{\langle} \phi (\Pi_L^j (z)) \; | \; \phi \in  \bigwedge_{\varepsilon_j\Z_p[\GG_L]}^{r_k} 
\Hom_{\varepsilon_j\Z_p[\GG_L]}( P' , \varepsilon_j\Z_p [\GG_L] )  \Big{\rangle}_{\varepsilon_j\Z_p[\GG_L]}  
\\
&= \Big{\langle} \phi (\Pi_L^j (z)) \; | \; \phi \in  \bigwedge_{\varepsilon_j\Z_p[\GG_L]}^{r_k} 
\Hom_{\varepsilon_j\Z_p[\GG_L]}( H^1 (G_{L, S} , \Z_p (1-j)) , \varepsilon_j\Z_p [\GG_L] )  \Big{\rangle}_{\varepsilon_j\Z_p[\GG_L]}  
= :{\rm im } (\Pi_L^j (z)), 
\end{align*}
where the third equality follows from the surjectivity of the dual map of $f$. 
This completes the proof of Proposition \ref{prop:image_Pi}. 
\end{proof}

Now we define an element $\eta_{L, S}^j$ as follows. 

\begin{defn}\label{def:eta}
For any $j \in \Z_{\geq 1}$, we define 
\[
\eta_{L, S}^j  \in \varepsilon_j\bigcap_{\Z_p[\GG_{L}]}^{r_k}  H^1 (G_{L , S} , \Z_p(1-j)), 
\]
as the image of the basis $\varepsilon_jZ_{K_\infty / k , S}^j \in \varepsilon_j\Xi_{K_\infty / k , S} (j) $ 
in Theorem \ref{thm:zeta>0} under the following composite map 
\[
\varepsilon_j\Xi_{K_\infty / k , S} (j) \twoheadrightarrow  \varepsilon_j\Xi_{L / k , S} (j) \xrightarrow{\Pi_L^j} 
 \varepsilon_j\bigcap_{\Z_p[\GG_{L}]}^{r_k}  H^1 (G_{L , S} , \Z_p(1-j)), 
\]
where the first arrow is the natural map. 
\end{defn}
We note that the definition of $\eta_{L, S}^j$ depends on the choice of the ordered 
$\Z_p[\GG_L]$-basis $\{ e_{\iota_{L, 1}}^j, \dots , e_{\iota_{L, r_k}}^j \}$ of 
$X_L (j)$. 

%%%%%%%%%%%%%%%%%%%%%%%%%%%%%%%%%%%%%%%%%%%%%%%%%%%%%%%%%%%%
\subsection{Proof of Theorem \ref{thm:ES}}\label{ss:main2_proof}
%%%%%%%%%%%%%%%%%%%%%%%%%%%%%%%%%%%%%%%%%%%%%%%%%%%%%%%%%%%%
In this subsection, we give a precise statement of Theorem \ref{thm:ES} and prove this. 

For any $j \in \Z_{\geq 1}$, 
we consider the following $\varepsilon_j \C_p[\GG_L]$-isomorphism 
\[
\wedge \lambda_{L}^j :
\C_p \otimes_{\Q_p} 
 \varepsilon_j \bigwedge_{\Q_p[\GG_L]}^{r_k} H^1 (G_{L, S}, \Q_p (1-j) ) 
\xrightarrow{\sim} \C_p \otimes_{\Z_p}  \varepsilon_j \bigwedge_{\Z_p [\GG_L]}^{r_k} X_L (-j) 
\]
which is induced by $\lambda_{L}^j$ in Definition \ref{def:period_reg>0}. 
The following result is a partial answer of a conjecture of Burns--Kurihara--Sano (see \cite[Definition 2.9 and Conjecture 3.6]{BKS20}). 
Namely, $\eta_{L, S}^j$ coincides with a ``generalized Stark element" defined in \cite{BKS20}. 

\begin{thm}\label{thm:Euler sys and Fitt}
For any $j \in \Z_{\geq 1}$, the following statements hold. 
\begin{itemize}
\item[(i)]
We have 
\[
\varepsilon_j \Fitt_{\Z_p[\GG_L]}^0 (H^2 (G_{L , S} , \Z_p(1-j)) = {\rm im} (\eta_{L, S}^j). 
\]
\item[(ii)]
We have 
\[
\wedge \lambda_{L}^j  (\eta_{L, S}^j) = 
(-1)^{r_k (j-1)}\cdot \varepsilon_j \Theta_{L/k , S} (j)^\# \cdot  \wedge_{i=1}^{r_k} e_{\iota_{L, i}}^{-j}
\]
in $ \C_p \otimes_{\Z_p}  \varepsilon_j \bigwedge_{\Z_p [\GG_L]}^{r_k} X_L (-j) $. 
\end{itemize}
\end{thm}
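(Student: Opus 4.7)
My plan is to reduce both parts to statements that have essentially been set up in the preceding subsections. The key observation is that the image $\varepsilon_j z_{L,S}^j \in \varepsilon_j\Xi_{L/k,S}(j)$ of the Iwasawa-theoretic basis $\varepsilon_j Z_{K_\infty/k,S}^j$ under the natural surjection $\varepsilon_j\Xi_{K_\infty/k,S}(j) \twoheadrightarrow \varepsilon_j\Xi_{L/k,S}(j)$ is itself an $\varepsilon_j\Z_p[\GG_L]$-basis. This follows from the compatibility of the determinant functor with base change: since $\Z_p[\GG_L]\otimes^{\mathbb{L}}_{\Lambda}\RG_{\Iw}(G_{K_\infty,S},\Z_p(1-j)) = \RG(G_{L,S},\Z_p(1-j))$ and similarly $\Z_p[\GG_L]\otimes_{\Lambda}X_{K_\infty}(-j)^+ = X_L(-j)^+$, the transition map sends a $\varepsilon_j\Lambda$-basis to a $\varepsilon_j\Z_p[\GG_L]$-basis. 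By Definition \ref{def:eta}, $\eta_{L,S}^j = \Pi_L^j(\varepsilon_j z_{L,S}^j)$.

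For part (i), once we know $\varepsilon_j z_{L,S}^j$ is an $\varepsilon_j\Z_p[\GG_L]$-basis, the identity
\[
\mathrm{im}(\eta_{L,S}^j) = \mathrm{im}(\Pi_L^j(\varepsilon_j z_{L,S}^j)) = \varepsilon_j\Fitt_{\Z_p[\GG_L]}^0(H^2(G_{L,S},\Z_p(1-j)))
\]
is exactly the content of Proposition \ref{prop:image_Pi}(ii). So (i) is immediate from the above basis remark.

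For part (ii), the plan is to unwind the definitions of $\Pi_L^j$ and of $\vartheta_{L/k,S}^j$ side by side and note they differ only by the evaluation pairing against the fixed ordered basis $\{e_{\iota_{L,1}}^{-j},\dots,e_{\iota_{L,r_k}}^{-j}\}$ of $X_L(-j)^+$. Concretely, writing an arbitrary $z\in\varepsilon_j\Xi_{L/k,S}(j)$ after applying Lemma \ref{lem:complex}(3) as $z = x\otimes \varepsilon_j(\wedge_{i=1}^{r_k}e_{\iota_{L,i}}^{-j})^\ast$ with $x\in\varepsilon_j\bigwedge^{r_k}_{\Q_p[\GG_L]}H^1(G_{L,S},\Q_p(1-j))$, the definition of $\Pi_L^j$ gives $\Pi_L^j(z) = x$. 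On the other hand, if we expand $\wedge\lambda_L^j(x) = a\cdot\wedge_{i=1}^{r_k}e_{\iota_{L,i}}^{-j}$ with $a\in\varepsilon_j\C_p[\GG_L]$, then the definition of $\vartheta_{L/k,S}^j$ (the chain $\lambda_L^j$ followed by the evaluation pairing against the same basis) yields $\vartheta_{L/k,S}^j(z) = a$. Combining, $\wedge\lambda_L^j(\Pi_L^j(z)) = \vartheta_{L/k,S}^j(z)\cdot\wedge_{i=1}^{r_k}e_{\iota_{L,i}}^{-j}$ for any $z$. Applying this to $z = \varepsilon_j z_{L,S}^j$ and invoking Theorem \ref{thm:zeta>0}(1), which computes $\vartheta_{L/k,S}^j(\varepsilon_j z_{L,S}^j) = (-1)^{r_k(j-1)}\varepsilon_j\Theta_{L/k,S}(j)^\#$, gives the claimed formula.

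The only delicate point will be bookkeeping: verifying that the normalization of the map $X_K(-j)\simeq \Hom_{\Z_p}(X_K(j),\Z_p)$ used in Definition \ref{def:period_reg>0} is consistent with the one implicit in the construction of $\Pi_L^j$, so that the evaluation pairing in $\vartheta_{L/k,S}^j$ and the ``drop the $\Det^{-1}(X_K(-j)^+)$ factor'' operation in $\Pi_L^j$ really coincide under the identification $\wedge\lambda_L^j$. Once the basis $\{e_{\iota_{L,i}}^{-j}\}$ and its dual are tracked consistently through both definitions, no extra sign appears, and the $(-1)^{r_k(j-1)}$ in the conclusion comes entirely from Theorem \ref{thm:zeta>0}(1). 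I expect this sign/basis compatibility to be the only nontrivial step; everything else is formal.
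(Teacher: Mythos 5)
Your proposal is correct and follows essentially the same route as the paper: part (i) is exactly Proposition \ref{prop:image_Pi}(ii) applied to the image of $\varepsilon_j Z^{j}_{K_\infty/k,S}$ (which is a basis by base-change compatibility of determinants, as you note), and part (ii) is the paper's argument that the image of the zeta element under the composite map is $\eta_{L,S}^j \otimes \varepsilon_j(\wedge_{i=1}^{r_k} e_{\iota_{L,i}}^{-j})^\ast$, so that $\wedge\lambda_L^j(\eta_{L,S}^j)$ is recovered from $\vartheta_{L/k,S}^j$ and Theorem \ref{thm:zeta>0}(1). The basis/sign bookkeeping you flag is indeed the only point to check, and it works out as you expect since $\Pi_L^j$ and the evaluation step in $\vartheta_{L/k,S}^j$ use the same ordered basis and its dual.
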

\begin{proof}
The claim (i) immediately follows from Proposition \ref{prop:image_Pi} and Definition \ref{def:eta}. 
Therefore, we prove the claim (ii). 

We consider the following composite map 
\begin{align*}
\varepsilon_j\Xi_{K_\infty / k , S} (j)  
&\twoheadrightarrow
\varepsilon_j\Xi_{L / k , S} (j)  
\hookrightarrow
\Q_p \otimes_{\Z_p} \varepsilon_j\Xi_{L / k , S} (j) 
\\&\xrightarrow{\sim} 
\varepsilon_j\Det_{\Q_p [\GG_L]} (H^1 (G_{L, S} , \Q_p (1-j))) 
\otimes_{\varepsilon_j\Z_p[\GG_L]} \varepsilon_j\Det_{\Z_p[\GG_L]}^{-1} (X_K (-j)^+)
\\
&= 
\varepsilon_j \bigwedge_{\Q_p [\GG_L]}^{r_k} (H^1 (G_{L, S} , \Q_p (1-j))) 
\otimes_{\varepsilon_j\Z_p[\GG_L]} \varepsilon_j\Det_{\Z_p[\GG_L]}^{-1} (X_K (-j)^+), 
\end{align*}
where the isomorphism is induced by Lemma \ref{lem:complex} (3). 
By Definition \ref{def:eta}, we know that the image of the basis 
$\varepsilon_jZ_{K_\infty / k , S}^j \in \varepsilon_j\Xi_{K_\infty / k , S} (j) $ 
under the above composite map coincides with 
$\eta_{L, S}^j \otimes \varepsilon_j (\wedge_{i=1}^{r_k} e_{\iota_{L, i}}^{-j})^\ast$. 
Therefore, by Theorem \ref{thm:zeta>0} (1) and the definition of $\vartheta_{L/ k, S}^{j}$ in Definition \ref{def:period_reg>0}, 
the isomorphism 
\begin{align*}
\C_p \otimes_{\Z_p} \varepsilon_j \bigwedge_{\Q_p [\GG_L]}^{r_k} (H^1 (G_{L, S} , \Q_p (1-j))) 
\otimes_{\varepsilon_j\Z_p[\GG_L]} \varepsilon_j\Det_{\Z_p[\GG_L]}^{-1} (X_K (-j)^+)
\\ 
\overset{\wedge \lambda_{L}^j}{\xrightarrow{\sim}}
\C_p \otimes_{\Z_p} \varepsilon_j\Det_{\Z_p[\GG_L]} (X_K (-j)^+)
\otimes_{\varepsilon_j\Z_p[\GG_L]} \varepsilon_j\Det_{\Z_p[\GG_L]}^{-1} (X_K (-j)^+)
\overset{\ev}{\xrightarrow{\sim}}
\varepsilon_j \C_p [\GG_L]
\end{align*}
sends the element $\eta_{L, S}^j \otimes \varepsilon_j (\wedge_{i=1}^{r_k} e_{\iota_{L, i}}^{-j})^\ast$ to 
$(-1)^{r_k (j-1)}\cdot \varepsilon_j\Theta_{L/k , S} (j)^\#$. 
The claim (ii) follows from this. 
\end{proof}

Next, we give a congruence relation between 
$\eta_{L, S}^j$ and $\eta_{L, S}^{j'}$ for different integers $j \text{ and } j'$. 
This congruence is also conjectured by Burns--Kurihara--Sano 
for generalized Stark elements (cf. \cite[Conjecture  2.11]{Tsoi19}). 

We fix a positive integer $n$ such that $\mu_{p^n} \subset L$. 
We write $\bar{\chi}_{\cyc, n}$ for the associated cyclotomic character 
$\Gal (L / k) \to \Aut (\mu_{p^n}) \simeq (\Z / p^n )^\times$. 
For each integer $m$, 
we define 
$\overline{\twist_{L, n}^m}$ to be the ring automorphism of $\Z / p^n  [\GG_L]$ which sends 
$\sigma$ to $\bar{\chi}_{\cyc, n} (\sigma)^m \sigma$. 
%Clearly, this induces a ring isomorphism 
%$\overline{\twist}_{K, n}^m : \varepsilon_j\Z / p^n  [\GG_K] \xrightarrow{\sim} e_{j+m}^+\Z / p^n  [\GG_K]$ 
%for any $j \in \Z$. 

Recall that we fixed an embedding $\iota_{L, i} : L \hookrightarrow \C$ for each $1 \leq i \leq r_k $ at 
 the beginning of \S \ref{section:eulersystem}. 
Then, for each $1 \leq i \leq r_k $, we set 
\[
\xi_i := \iota_{L, i}^{-1}  \parenth{\exp (\frac{2 \pi \sqrt{-1}}{p^n})} \in \mu_{p^n} (L)
= H^0 (G_{L, S}, \Z / p^n  (1)). 
\]
Using this, for any $1 \leq i \leq r_k$ and any integers $j$ and $j'$, 
we consider the map 
\begin{align*}
c_i : \Hom_{\Z / p^n  [\GG_L]} (H^1 (G_{L , S} , \Z / p^n (1-j')) ,  \Z/ p^n [\GG_L]) \\
\to \Hom_{\Z / p^n [\GG_L]} (H^1 (G_{L , S} , \Z / p^n(1-j)) , \Z / p^n  [\GG_L])
\end{align*}
induced by taking cup product with $\xi_i^{\otimes j - j'}$. 
Then,  we define a $\overline{\twist_{L, n}^{j' - j}}$-semilinear isomorphism  
\[
\overline{\Twist}_{1-j , 1-j' , L, n} :  \bigcap_{\Z / p^n [\GG_{L}]}^{r_k} H^1 (G_{L , S} , \Z / p^n(1-j) )
\xrightarrow{\sim}  \bigcap_{\Z / p^n  [\GG_{L}]}^{r_k}H^1 (G_{L , S} , \Z / p^n (1-j'))
\]
by the $\Z / p^n [\GG_L]$-linear dual of the following map 
\begin{align*}
\bigwedge_{\Z / p^n [\GG_L]}^{r_k} 
\Hom_{\Z / p^n  [\GG_L]} (H^1 (G_{L , S} , \Z / p^n (1-j')) ,  \Z/ p^n [\GG_L]) \\
\to \bigwedge_{\Z / p^n [\GG_L]}^{r_k} 
\Hom_{\Z / p^n [\GG_L]} (H^1 (G_{L , S} , \Z / p^n(1-j)) , \Z / p^n  [\GG_L]) 
\end{align*}
that sends $\wedge_{i=1}^{r_k} a_i$ to $\wedge_{i=1}^{r_k} c_i (a_i)$. 
Since $\overline{\twist_{L, n}^{j' - j}} (\varepsilon_j) = \varepsilon_{j'}$, this induces 
\[
\overline{\Twist}_{1-j , 1-j' , L, n} :  \varepsilon_j \bigcap_{\Z / p^n [\GG_{L}]}^{r_k} H^1 (G_{L , S} , \Z / p^n(1-j) )
\xrightarrow{\sim}  \varepsilon_{j'}\bigcap_{\Z / p^n  [\GG_{L}]}^{r_k}H^1 (G_{L , S} , \Z / p^n (1-j')). 
\]

We can now state a congruence relation between $\eta_{L, S}^j$ and $\eta_{L, S}^{j'}$. 
We write $\ol{\eta_{L, S}^j }$ 
for the image of $\eta_{L, S}^j$ under the following reduction map modulo $p^n$ (cf. \cite[Lemma 2.4]{Tsoi19}): 
\[
{\rm red}_{p^n} : \varepsilon_j \bigcap_{\Z_p [\GG_{L}]}^{r_k} H^1 (G_{L , S} , \Z _p(1-j) ) \to 
\varepsilon_j \bigcap_{\Z / p^n [\GG_{L}]}^{r_k} H^1 (G_{L , S} , \Z / p^n(1-j) ) . 
\]

\begin{thm}\label{thm:generalised kummer congruence}
For any positive integers $j$ and $j'$ and any $n \geq 0$ such that $\mu_{p^n} \subset L$, we have 
\[
\overline{\Twist}_{1-j , 1-j' , L, n} (\ol{\eta_{L, S}^j}) = \ol{\eta_{L, S}^{j'}} 
\]
in $\bigcap_{\Z / p^n  [\GG_{L}]}^{r_k}H^1 (G_{L , S} , \Z / p^n (1-j'))$. 
\end{thm}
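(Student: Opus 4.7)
The strategy is to deduce the congruence directly from the twist compatibility of Iwasawa-theoretic zeta elements established in Theorem \ref{thm:zeta>0}(2), namely
\[
\Twist_{1-j,1-j'}^{\Xi}\bigl(\varepsilon_j Z_{K_\infty/k,S}^{j}\bigr) = \varepsilon_{j'} Z_{K_\infty/k,S}^{j'}.
\]
Since $\eta_{L,S}^{j}$ is by Definition \ref{def:eta} the image of $\varepsilon_j Z_{K_\infty/k,S}^{j}$ under the canonical composition
\[
\varepsilon_j \Xi_{K_\infty/k,S}(j) \twoheadrightarrow \varepsilon_j \Xi_{L/k,S}(j) \xrightarrow{\Pi_L^{j}} \varepsilon_j \bigcap_{\Z_p[\GG_L]}^{r_k} H^1(G_{L,S},\Z_p(1-j)),
\]
it suffices to verify that, after reducing modulo $p^n$, this composition intertwines $\Twist_{1-j,1-j'}^{\Xi}$ on the source with $\overline{\Twist}_{1-j,1-j',L,n}$ on the target. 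Applying the intertwined diagram to $\varepsilon_j Z_{K_\infty/k,S}^{j}$ and invoking Theorem \ref{thm:zeta>0}(2) will then produce the claim.

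The first step is to unpack $\Twist_{1-j,1-j'}^{\Xi}$ into its two $\twist_{j'-j}$-semilinear components: one on $\Det_\Lambda^{-1}(\RG_{\Iw}(G_{K_\infty,S},\Z_p(1-j)))$, arising from the canonical Tate-twist identification $\Z_p(1-j)\otimes\Z_p(j'-j)=\Z_p(1-j')$, and one on $\Det_\Lambda^{-1}(X_{K_\infty}(-j)^{+})$, sending the canonical basis element $e_{\iota_{L,i}}^{-j}$ to $e_{\iota_{L,i}}^{-j'}$ up to an explicit root-of-unity factor governed by the cyclotomic character. The second step is to verify that at the finite level $L$ and modulo $p^n$, the Iwasawa twist on cohomology is precisely the cup-product isomorphism
\[
H^1(G_{L,S},\Z/p^n(1-j)) \xrightarrow{\sim} H^1(G_{L,S},\Z/p^n(1-j'))
\]
given by $x\mapsto x\cup\xi^{\otimes(j'-j)}$, where $\xi^{\otimes(j'-j)}\in H^0(G_{L,S},\Z/p^n(j'-j))$ is built from the fixed element $\xi_i\in\mu_{p^n}(L)$. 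This is exactly the map whose $\Z/p^n[\GG_L]$-linear dual, extended to $r_k$-th exterior powers, defines $\overline{\Twist}_{1-j,1-j',L,n}$. The evaluation pairing defining $\Pi_L^{j}$ therefore transports the source twist to the target twist, provided one accounts for the parallel twist on the $X_{K_\infty}$-factor: the root-of-unity scalar introduced when twisting the dual basis $(\wedge_{i=1}^{r_k} e_{\iota_{L,i}}^{-j})^{\ast}$ into $(\wedge_{i=1}^{r_k} e_{\iota_{L,i}}^{-j'})^{\ast}$ is exactly the inverse of the cup-product scalar from step two, so the two cancel and leave the $\Pi_L^{j}$-image transforming by $\overline{\Twist}_{1-j,1-j',L,n}$ alone.

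The main obstacle will be the careful bookkeeping of reduction modulo $p^n$ on the bidual $\bigcap^{r_k}$, which in general is sensitive to torsion. Here Lemma \ref{lem:H^1torsion-free} (the $\Z_p$-torsion-freeness of $\varepsilon_j H^1(G_{L,S},\Z_p(1-j))$) is crucial: it guarantees that the reduction map $\mathrm{red}_{p^n}$ of \cite[Lemma 2.4]{Tsoi19} is compatible with the formation of exterior power biduals and with the map $\Pi_L^{j}$, so that no information is lost when passing to mod $p^n$ coefficients. Once this is secured, the identification of the Iwasawa twist with the cup product at the finite level becomes a purely functorial statement about the coefficient modules, and the commutativity of the square needed above reduces to a direct calculation on the dual bases of $X_L(-j)$ chosen via the embeddings $\iota_{L,i}$; in rank one this is the argument underlying \cite[\S 2]{Tsoi19}, and the higher-rank case follows by a multilinear extension.
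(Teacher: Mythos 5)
Your proposal follows the paper's proof essentially verbatim: both deduce the congruence from Theorem \ref{thm:zeta>0}(2) and Definition \ref{def:eta}, once one knows that the composite ${\rm red}_{p^n}\circ\Pi_L^{j}$ (preceded by the projection to level $L$) intertwines $\Twist_{1-j,1-j'}^{\Xi}$ with $\overline{\Twist}_{1-j,1-j',L,n}$. The only difference is that the paper obtains this commutative square by citing \cite[Proposition 4.8]{Tsoi19}, whereas you sketch a direct verification of it (cup-product description of the twist at finite level, cancellation of the scalar on the $X$-factor, and torsion-freeness via Lemma \ref{lem:H^1torsion-free}); this is the same argument, carried out rather than quoted.
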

\begin{proof}
%For any even integer $m$, 
%we write $\twist_{m, \cK_\infty^+}$ for the ring automorphism of $\Lambda_{\cK_\infty^+}$ induced by 
%the $m$-th cyclotomic character. 
For any integers $j$ and $j'$, 
we consider the isomorphism  
\[
\Twist_{1-j,1-j'}^\Xi : \varepsilon_j \Xi_{K_\infty / k, S} (j) \to \varepsilon_{j'}\Xi_{K_\infty / k, S} (j') 
\] 
which is defined in \S \ref{ss:twist_CM}. 
Then, by \cite[Proposition 4.8]{Tsoi19}, we have the following commutative diagram  
\[
\xymatrix@C=40pt{
\varepsilon_j\Xi_{K_\infty / k, S} (j) \ar[d]_-{\Twist_{1-j,1-j'}^\Xi } \ar@{->>}[r]
&\varepsilon_j \Xi_{L/ k, S} (j) \ar[r]^-{{\rm red}_{p^n} \circ \Pi_L^{j}}
&
\varepsilon_j \bigcap_{\Z / p^n  [\GG_{L}]}^{r_k}H^1 (G_{L , S} , \Z / p^n (1-j)) 
\ar[d]^-{\overline{\Twist}_{1-j , 1-j' , L, n}}
\\ 
\varepsilon_j \Xi_{K_\infty / k, S} (j') \ar@{->>}[r]
& \varepsilon_j \Xi_{L / k, S} (j')  \ar[r]^-{{\rm red}_{p^n} \circ \Pi_L^{j'}}
%&\bigcap_{\Z_p  [\GG_{K}]}^{r_k}H^1 (G_{K , S(K)} , \Z_p (1-j')) \ar[r]
&
\varepsilon_j\bigcap_{\Z / p^n  [\GG_{L}]}^{r_k}H^1 (G_{L , S} , \Z / p^n (1-j')), 
}
\] 
Therefore, the claim follows from Theorem \ref{thm:zeta>0} (2) and 
Definition \ref{def:eta}. 
\end{proof}

%%%%%%%%%%%%%%%%%%%%%%%%%%%%%%%%%%%%%%%%%%%%%%%%%%%%%%%%%
\section{The case when $k=\Q$}\label{k=Q}
%%%%%%%%%%%%%%%%%%%%%%%%%%%%%%%%%%%%%%%%%%%%%%%%%%%%%%%%%
In this section, 
we consider the case when the base field $k$ is the rational field $\Q$. 
In this case, the basis in Theorem \ref{thm:zeta>0} coincides with
a zeta element which is defined by Kato using cyclotomic units in \cite[Chapter I, 3.3.5]{Kato91}. 
We explain this here.

%%%%%%%%%%%%%%%%%%%%%%%%%%%%%%%%%%%%%%%%%%%%%%%%%%%%%%%%%
\subsection{Cyclotomic units}
%%%%%%%%%%%%%%%%%%%%%%%%%%%%%%%%%%%%%%%%%%%%%%%%%%%%%%%%%
We fix an odd prime number $p$. 
Let $N'$ be a positive integer such that $N' \not\equiv 2 \pmod 4$ and $N'$ is prime to $p$. 
We consider the $N'$-th cyclotomic field $K:= \Q (\mu_{N'})$ and 
set $K_n := K (\mu_{p^n})$ for any $n \in \Z_{\geq 0}$. 
 
For simplicity, we take $S$ to be the minimal set of bad primes of $ K / \Q$. 
Namely, let $S$ be the set of places of $\Q$ which divides $N'p\infty$. 

For any $n \geq 1$, 
we consider the cyclotomic unit 
\[
c_{N'p^n} := 
(1-\alpha_{N'p^n})(1-\alpha_{N'p^n}^{-1}) \in \R^\times, 
\]
where $\alpha_{N'p^n} := \exp (\frac{2 \pi \sqrt{-1}}{N'p^n}) \in \C$. 
Note that we can regard 
\[
\iota_{K_n}^{-1} (c_{N'p^n}) \in H^1 (G_{K_{n} , S} , \Z_p (1)) 
\]
for any embedding $\iota_{K_n} : K_{n} \hookrightarrow \C$ by Kummer theory. 
Then, we define an element 
\[
z_{N'p^n}^{\cyc} \in 
%\varprojlim_n \Q_p \otimes_{\Z_p} \Xi_{K_{Np^n} / \Q} (0):= \varprojlim_n 
H^1 (G_{K_{n} , S} , \Z_p (1)) \otimes_{\Z_p [\Gal (K_n / \Q)]} 
X_{K_{n}} (0)^{-1}
\]
by
\[
z_{N'p^n}^{\cyc} := \frac{1}{2} \cdot \iota_{K_n}^{-1} (c_{N'p^n}) \otimes e_{\iota_{K_n}}^{0, \ast}, 
\]
where $\iota_{K_n} : K_{n} \hookrightarrow \C$ is an embedding and 
\[
e_{\iota_{K_n}}^{0, \ast} \in X_{K_{n}} (0)^{-1} := \Hom_{\Z_p[\Gal (K_n / \Q)]} 
(\bigoplus_{\iota : K_{n} \hookrightarrow \C } \Z_p , \Z_p[\Gal (K_n / \Q)] )
\] 
is the dual basis of $e_{\iota_{K_n}}^{0}$ defined in \S \ref{ss:notation}. 
Note that the definition of $z_{N'p^n}^{\cyc}$ does not depend on the choice of an embedding $\iota_{K_n}$. 

Put $K_\infty := \cup_{n \geq 0} K_n$, $\Lambda := \Z_p [[\Gal (K_{\infty} / \Q)]]$ and 
$\varepsilon_j := \frac{1 + (-1)^j c}{2} \in \Lambda$ for any $j \in \Z$ as in the previous section, 
where $c \in \Gal (K_{\infty} / \Q)$ is the complex conjugation. 
For any intermediate field $\Q \subset L \subset K_{\infty}$ such that 
$[L : \Q] <\infty$ and $L$ is a CM-field, we also write $\varepsilon_j$ for the image 
of $\varepsilon_j$ under the natural restriction map $\Lambda \twoheadrightarrow \Z_p[\Gal (L / \Q)]$ but this causes no confusion. 
Then, we define an element 
\[
\varepsilon_0Z_{N'p^\infty}^{\cyc} \in 
%e^+_0 \Q_p [[\Gal (K_{Np^\infty} / \Q)]] \otimes_{\Lambda_{Np^\infty}} \Xi_{K_{Np^\infty} / \Q , S_N} (0)= 
\varepsilon_0 \parenth{ \varprojlim_n \Q_p \otimes_{\Z_p} \Xi_{K_{n} / \Q, S} (0) }
\]
by the projective limit of 
\[
\varepsilon_0 z_{N'p^n}^{\cyc} \in 
\Q_p \otimes_{\Z_p}
\varepsilon_0 \parenth{
H^1 (G_{K_{n} , S} , \Z_p (1)) \otimes_{\Z_p [\Gal (K_n / \Q)]} 
X_{K_{n}} (0)^{-1}
}
= 
\varepsilon_0 \parenth{ \Q_p \otimes_{\Z_p} \Xi_{K_{n} / \Q, S} (0) }. 
\]
We note that 
the twist map 
$ \Twist_{1, 1-j}^{\Xi} : \varepsilon_0\Xi_{K_{\infty} / \Q, S} (0) \xrightarrow{\sim}  \varepsilon_j\Xi_{K_{\infty} / \Q, S} (j)$
in \S \ref{ss:twist_CM} can be extended
\[
 \Twist_{1, 1-j}^{\Xi} : 
\varepsilon_0\parenth{ \varprojlim_n \Q_p \otimes_{\Z_p} \Xi_{K_{n} / \Q, S} (0) } 
\xrightarrow{\sim} 
\varepsilon_j\parenth{ \varprojlim_n \Q_p \otimes_{\Z_p} \Xi_{K_{n} / \Q, S} (j) }
\]
naturally. 

The following result is proved by Kato in \cite[Chapter III, Theorem 1.2.6]{Kato91}. 
\begin{thm}\label{Kato's_thm}
For any $j \in \Z_{\geq 1}$ and any intermediate field $\Q \subset L \subset K_{\infty}$ such that 
$[L : \Q] <\infty$ and $L$ is a CM-field, the following composite map 
\begin{align*}
&
\varepsilon_0 \parenth{ \varprojlim_n \Q_p \otimes_{\Z_p} \Xi_{K_{n} / \Q, S} (0) }
\overset{ \Twist_{1, 1-j}^{\Xi}}{\xrightarrow{\sim}} 
\varepsilon_j\parenth{ \varprojlim_n \Q_p \otimes_{\Z_p} \Xi_{K_{n} / \Q, S} (j) } 
\\
&\twoheadrightarrow 
\varepsilon_j (\Q_p \otimes_{\Z_p} \Xi_{L/ \Q, S} (j) )
\overset{\vartheta_{L / \Q, S}^j}{\hookrightarrow} 
\varepsilon_j \C_p [\Gal (L / \Q)]
\end{align*}
sends the element $\varepsilon_0 Z_{N'p^\infty}^{\cyc}$ to 
\[
(-1)^{j-1} \cdot \varepsilon_j \Theta_{L/ \Q , S} (j)^\# .
\]
\end{thm}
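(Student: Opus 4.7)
The plan is to deduce this directly from Kato's explicit reciprocity law \cite[Ch.~III, Thm.~1.2.6]{Kato91}, after unwinding the definitions of $\Twist_{1, 1-j}^\Xi$ and $\vartheta_{L/\Q, S}^j$. Since $r_k = 1$, on each character component the target of $\vartheta_{L/\Q, S}^j$ is one-dimensional over $\C_p$, so it suffices to test the equality after evaluating at each character $\chi$ of $\Gal(L/\Q)$ with $\chi(c) = (-1)^j$. The problem reduces to showing
\[
\chi\bigl(\vartheta_{L/\Q, S}^j(\text{image of } \varepsilon_0 Z_{N'p^\infty}^{\cyc})\bigr) = (-1)^{j-1} L_S(\chi^{-1}, j),
\]
where the appearance of $\chi^{-1}$ reflects the involution $\#$ applied to $\Theta_{L/\Q, S}(j)$.

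First I would unwind the twist map: by its construction in \S\ref{ss:twist_CM}, $\Twist_{1, 1-j}^\Xi$ sends the projective system $(\varepsilon_0 z_{N'p^n}^{\cyc})_n$ to an element whose first factor is the $j$-th Soulé twist of the compatible family $(\iota_{K_n}^{-1}(c_{N'p^n}))_n$, now lying in $\varepsilon_j H^1(G_{L, S}, \Z_p(1-j))$, and whose second factor is the corresponding twisted dual basis of $\varepsilon_j \Det^{-1}_{\Z_p[\Gal(L/\Q)]}(X_L(-j)^+)$. By Lemma \ref{lem:complex}(3) and Definition \ref{def:period_reg>0}, applying $\vartheta_{L/\Q, S}^j$ then amounts to taking the Bloch--Kato dual exponential $\exp_{\Q_p(j)}^\ast$ of the first factor, pushing through the comparison $\gamma_L^j$ and the pairing \eqref{Xpair}, and evaluating against the second factor.

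At this point Kato's explicit reciprocity law computes the character value of $\exp_{\Q_p(j)}^\ast$ of the twisted cyclotomic unit as an explicit rational multiple of $L_S(\chi^{-1}, j)$. Combining this with the numerical normalisations built into our setup --- the factor $\tfrac{1}{2}$ in the definition of $z_{N'p^n}^{\cyc}$, the factor $2$ in $\beta_L^j$ from \eqref{2zure}, and the factor $(2\pi\sqrt{-1})^j$ implicit in passing between the Betti basis $b_\iota^j$ and the $p$-adic basis $e_\iota^j$ via \eqref{Y Betti} --- the $L$-value produced by Kato's formula matches the target up to a sign. The main obstacle is the sign tracking: one must verify that the parity-dependent definition of $\gamma_L^j$ (real parts for even $j$, imaginary parts for odd $j$), together with the complex-conjugation factor appearing in $\Twist_{1, 1-j}^\Xi$, combines exactly to $(-1)^{j-1}$. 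This sign is the specialisation to $r_k = 1$ of the $(-1)^{r_k(j-1)}$ appearing in Theorem \ref{thm:zeta>0}(1), which gives a useful consistency check.
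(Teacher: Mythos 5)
The paper itself gives no argument for this statement: it is quoted directly as a result proved by Kato, with the citation \cite[Chapter III, Theorem 1.2.6]{Kato91}, and the text around it only sets up the notation ($z_{N'p^n}^{\cyc}$, the extension of $\Twist_{1,1-j}^{\Xi}$ to the $\Q_p$-span) needed to state it in the present normalization. Your proposal therefore goes further than the paper does, in that you attempt to rederive the statement from Kato's explicit reciprocity law by unwinding $\Twist_{1,1-j}^{\Xi}$ and $\vartheta_{L/\Q,S}^{j}$ character by character. The reduction steps you describe are sound: testing after evaluation at characters $\chi$ with $\chi(c)=(-1)^j$ is legitimate since $\varepsilon_j\C_p[\GG_L]$ splits into lines, and your reading of the involution ($\chi(\Theta_{L/\Q,S}(j)^\#)=L_S(\chi^{-1},j)$) is correct, as is the description of the twist as the Soul\'e twist of the norm-compatible family of cyclotomic units paired against the twisted dual basis of $X_L(-j)^+$.

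The genuine gap is that the only nontrivial content of the statement in this setting --- the exact match of normalizations and the sign $(-1)^{j-1}$ --- is asserted rather than verified. You list the relevant factors (the $\tfrac12$ in $z_{N'p^n}^{\cyc}$, the $2$ and $\tfrac12$ in $\beta_K^j$ from \eqref{2zure}, the parity-dependent definition of $\gamma_K^j$ in \eqref{embedding}, the Betti/\'etale comparison \eqref{Y Betti}) but never carry out the computation showing that Kato's interpolation formula, expressed through $\exp_{\Q_p(j)}^\ast$ and the pairing \eqref{Xpair}, produces precisely $(-1)^{j-1}L_S(\chi^{-1},j)$ under these conventions; since the whole theorem is essentially Kato's result re-expressed in these conventions, omitting this bookkeeping omits the proof. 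Moreover, your proposed way of pinning down the sign --- matching it against the $(-1)^{r_k(j-1)}$ of Theorem \ref{thm:zeta>0}(1) --- cannot serve as more than a heuristic here: in the paper the comparison between the zeta element of Theorem \ref{thm:zeta>0} and the cyclotomic element is exactly what Corollary \ref{cor:k=Q} deduces \emph{from} Theorem \ref{Kato's_thm} together with the uniqueness in Theorem \ref{thm:zeta>0}, so invoking that comparison to fix the sign of Theorem \ref{Kato's_thm} is circular. If you want an actual proof rather than a citation, you must do the explicit character-by-character computation with Kato's formula and the maps $\gamma_L^j$, $\beta_L^j$, \eqref{Xpair}, \eqref{canonical_id}; otherwise the honest route is the paper's, namely to state the result as Kato's theorem in the present normalization.
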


From this and Theorem \ref{thm:zeta>0}, we know the following result. 
\begin{cor}\label{cor:k=Q}
Let $S$ be the set of places of $\Q$ which divides $N'p\infty$. 
Then, the $\varepsilon_j \Lambda$-basis 
\[
\varepsilon_j Z_{K_\infty / \Q , S}^j \in \varepsilon_j \Xi_{K_\infty / \Q , S} (j)
\]
in Theorem \ref{thm:zeta>0} coincides with $ \Twist_{1, 1-j}^{\Xi} (\varepsilon_0 Z_{N'p^\infty}^{\cyc})$ in 
\[
\varepsilon_j\parenth{ \varprojlim_n \Q_p \otimes_{\Z_p} \Xi_{K_{n} / \Q, S} (j) } 
(\supset \varepsilon_j \Xi_{K_\infty / \Q , S} (j))
\]
for any $j \in \Z_{\geq 1}$. 
\end{cor}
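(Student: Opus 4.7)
The plan is to reduce the identity to a direct comparison of the two interpolation formulas, using that $r_k = [\Q:\Q] = 1$ so that the sign $(-1)^{r_k(j-1)}$ appearing in Theorem \ref{thm:zeta>0}(1) matches the sign $(-1)^{j-1}$ appearing in Theorem \ref{Kato's_thm}.

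First I would observe that both elements under comparison live naturally in the larger module
\[
\varepsilon_j\!\left(\varprojlim_n \Q_p \otimes_{\Z_p} \Xi_{K_n/\Q, S}(j)\right)
\supset \varepsilon_j \Xi_{K_\infty/\Q, S}(j).
\]
By construction of the inverse limit, two elements of this module coincide if and only if they have the same image in $\Q_p \otimes_{\Z_p} \varepsilon_j \Xi_{L/\Q,S}(j)$ for every $L \in \Omega(K_\infty/\Q)$. Moreover, since $\vartheta_{L/\Q,S}^{j}$ is an isomorphism onto $\varepsilon_j \C_p[\Gal(L/\Q)]$, equality of their images under the composite with $\vartheta_{L/\Q,S}^j$ for every such $L$ is sufficient.

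Next I would fix an arbitrary $L \in \Omega(K_\infty/\Q)$ and compute both images. On the one hand, Theorem \ref{thm:zeta>0}(1) (applied with $k = \Q$, so $r_k = 1$) says that the image of $\varepsilon_j Z_{K_\infty/\Q, S}^{j}$ under
\[
\varepsilon_j \Xi_{K_\infty/\Q,S}(j) \twoheadrightarrow \varepsilon_j \Xi_{L/\Q,S}(j)
\xrightarrow{\vartheta_{L/\Q, S}^j} \varepsilon_j \C_p[\Gal(L/\Q)]
\]
is $(-1)^{j-1}\, \varepsilon_j \Theta_{L/\Q, S}(j)^{\#}$. On the other hand, Theorem \ref{Kato's_thm} states that the same composite (after extending scalars to $\Q_p$ and precomposing with $\Twist_{1,1-j}^{\Xi}$) sends $\varepsilon_0 Z_{N'p^\infty}^{\cyc}$ to $(-1)^{j-1}\, \varepsilon_j \Theta_{L/\Q, S}(j)^{\#}$ as well. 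The two values agree, so both elements have the same image in $\varepsilon_j \C_p[\Gal(L/\Q)]$ for every $L$, and hence agree in the inverse limit.

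The only point requiring care — and essentially the only thing to verify — is that the sign and normalization conventions used in the definitions of $\Twist_{1,1-j}^\Xi$, $\vartheta_{L/\Q, S}^j$, and the Kato element $\varepsilon_0 Z_{N'p^\infty}^{\cyc}$ are consistent with those used in the proof of Theorem \ref{thm:zeta>0} and in Kato's statement \cite[Chapter III, Theorem 1.2.6]{Kato91}. Provided these match (in particular, that the factor $\tfrac{1}{2}$ in the definition of $z_{N'p^n}^{\cyc}$ and the identification \eqref{2zure} conspire to give the stated interpolation formula), the corollary follows formally. I do not expect a substantive obstacle here; the proof is essentially a bookkeeping check of signs and of the placement of the cyclotomic basis $e_{\iota_{K_n}}^{0,\ast}$ relative to the normalizations in \S \ref{ss:notation} and Definition \ref{def:period_reg>0}.
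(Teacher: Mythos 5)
Your proposal is correct and is essentially the paper's own (implicit) argument: Corollary \ref{cor:k=Q} is deduced directly by comparing the interpolation formula of Theorem \ref{thm:zeta>0}(1) with $r_k=1$ against Theorem \ref{Kato's_thm}, and using that the maps $\vartheta_{L/\Q,S}^{j}$ are injective on $\Q_p\otimes_{\Z_p}\varepsilon_j\Xi_{L/\Q,S}(j)$ for the cofinal family of layers, so equal images force equality in the limit. Note only that your final caveat about normalizations is already discharged by the paper itself, since Theorem \ref{Kato's_thm} is stated there in exactly the normalization (including $\Twist_{1,1-j}^{\Xi}$ and $\vartheta_{L/\Q,S}^{j}$) needed for the comparison.
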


%%%%%%%%%%%%%%%%%%%%%%%%%%%%%%%%%%%%%%%%%%%%%%%%%%%%%%%%%
\subsection{Explicit description of $\eta_{L, S}$ when $k =\Q$}
%%%%%%%%%%%%%%%%%%%%%%%%%%%%%%%%%%%%%%%%%%%%%%%%%%%%%%%%%
In this subsection, we give an explicit relation between the element  
$\eta_{L, S}^j$ 
%\in \varepsilon_j \bigcap_{\Z_p[\Gal (L/\Q)]}^1 H^1 (G_{L, S} , \Z_p (1-j)) 
%= H^1 (G_{L, S} , \Z_p (1-j)) 
in Definition \ref{def:eta} and the cyclotomic unit 
for any imaginary abelian field $L$ when $k = \Q$. 

Let $L / \Q$ be a finite imaginary abelian extension with conductor $N > 1$. 
We decompose $N = N' p^a$, where $N'$ is a positive integer which is prime to $p$ and $a \geq 0$. 
Set $K := \Q (\mu_{N'})$ and keep the other notation in the previous subsection. 

We fix an embedding $\iota_{K_\infty} : K_\infty \hookrightarrow \C$. 
For any intermediate field $\Q \subset M \subset K_\infty$, 
we set $\iota_{M} := \iota_{K_\infty}|_{M} : M \hookrightarrow \C$. 
Since $L$ is contained in $K_\infty$, 
we obtain a fixed embedding $\iota_L : L \hookrightarrow \C$. 
This determines the element 
\[
\eta_{L, S}^j 
\in \varepsilon_j \bigcap_{\Z_p[\Gal (L/\Q)]}^1 H^1 (G_{L, S} , \Z_p (1-j)) 
= \varepsilon_jH^1 (G_{L, S} , \Z_p (1-j)) 
\]
in Definition \ref{def:eta} for any $j \in \Z_{\geq 1}$ 
(recall that the Definition of $\eta_{L, S}^j $ depends on the choice of this embedding). 

On the other hand, we define an element 
$
c_L^j \in \varepsilon_jH^1 (G_{L, S} , \Z_p (1-j)) 
$
as follows. 
For any $n \in \Z_{\geq 1}$ and any $j \in \Z_{\geq 1}$, we consider an element  
\begin{align*}
\iota_{K_n}^{-1} (c_{N'p^n}) \otimes \iota_{K_n}^{-1} (\alpha_{N'p^n})^{\otimes -j} 
&\in H^1 (G_{K_n, S} , \Z / p^n (1)) \otimes \Z / p^n (-j)
\\
&=  H^1 (G_{K_n, S} , \Z / p^n (1-j)).  
\end{align*}
If $n \geq a$, we have $L \subset K_n$. 
Thus, we can define an element 
\[
c_{L, n}^j := \cores_{K_n / L} \parenth{ 
\iota_{K_n}^{-1} (c_{N'p^n}) \otimes \iota_{K_n}^{-1} (\alpha_{N'p^n})^{\otimes -j} } 
\in  H^1 (G_{L, S} , \Z / p^n (1-j))
\]
for any $n \geq a$. These elements are compatible with respect to natural maps $\Z / p^n \twoheadrightarrow \Z / p^{n-1} $. 
Thus, we can define the element 
\[
c_L^j := (c_{L, n}^j )_n \in H^1 (G_{L, S} , \Z_p (1-j)) = \varprojlim_n 
H^1 (G_{L, S} , \Z / p^n (1-j))
\]
for any $j \in \Z_{\geq 1}$. 
Since the complex conjugation $c \in \Gal (L / \Q)$ acts on $c_L^j$ by $(-1)^j$, 
we know that $c_L^j  \in \varepsilon_jH^1 (G_{L, S} , \Z_p (1-j))$. 

Recall that $S$ is the set of primes of $\Q$ which divides $Np\infty$. 
\begin{thm}\label{thm:eta=cyc}
For any $j \in \Z_{\geq 1}$, 
we have 
\[
\eta_{L, S}^j = \frac{1}{2}c_{L}^j
\] 
in $\varepsilon_jH^1 (G_{L, S} , \Z_p (1-j))$. 
\end{thm}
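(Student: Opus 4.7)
The plan is to reduce the claim to Corollary \ref{cor:k=Q} and then to trace the image of $\varepsilon_0 Z_{N'p^\infty}^{\cyc}$ through the chain of maps that defines $\eta_{L,S}^j$.

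First, I would simplify $\Pi_L^j$ in the present case: since $r_k = 1$, the rank-one bidual $\bigcap^{1}_{\Z_p[\GG_L]} H^1(G_{L,S}, \Z_p(1-j))$ is naturally identified with $\varepsilon_j H^1(G_{L,S}, \Z_p(1-j))$ (using Lemma \ref{lem:H^1torsion-free} to ensure torsion-freeness), and $\Pi_L^j$ simply strips off the rank-one $X$-factor, sending $x \otimes \varepsilon_j (e_{\iota_L}^{-j})^{\ast}$ to $x$. Thus it suffices to show that, under the natural surjection $\varepsilon_j \Xi_{K_\infty / \Q, S}(j) \twoheadrightarrow \varepsilon_j \Xi_{L / \Q, S}(j)$, the basis $\varepsilon_j Z_{K_\infty / \Q, S}^j$ has image $\frac{1}{2} c_L^j \otimes \varepsilon_j (e_{\iota_L}^{-j})^{\ast}$.

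Next, by Corollary \ref{cor:k=Q} we have $\varepsilon_j Z_{K_\infty/\Q, S}^j = \Twist_{1, 1-j}^\Xi(\varepsilon_0 Z_{N'p^\infty}^{\cyc})$, and at finite level $n$ the cyclotomic element is $\frac{1}{2} \iota_{K_n}^{-1}(c_{N'p^n}) \otimes e_{\iota_{K_n}}^{0,\ast}$. The core step is to analyse $\Twist_{1, 1-j}^\Xi$ on each tensor factor separately: on $\Det^{-1}(\RG)$ the twist is implemented by cup product with the compatible system of roots of unity $(\iota_{K_n}^{-1}(\alpha_{N'p^n}))_n$, so that $\iota_{K_n}^{-1}(c_{N'p^n})$ is sent to the class $\iota_{K_n}^{-1}(c_{N'p^n}) \otimes \iota_{K_n}^{-1}(\alpha_{N'p^n})^{\otimes -j}$; on $\Det^{-1}(X^+)$ the twist is the obvious shift $e_{\iota_{K_n}}^{0,\ast} \mapsto e_{\iota_{K_n}}^{-j,\ast}$. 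These two twists are compatible via the evaluation pairing used in $\Pi_L^j$.

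Finally, corestricting from level $K_n$ down to $L$ (for $n \geq a$, so that $L \subset K_n$) replaces the cohomology factor by $c_{L,n}^j$, while the transition map on the $X$-side sends $e_{\iota_{K_n}}^{-j,\ast}$ to $e_{\iota_L}^{-j,\ast}$ under the convention $\iota_L = \iota_{K_n}|_L$. Passing to the inverse limit in $n$ yields $\frac{1}{2} c_L^j \otimes \varepsilon_j (e_{\iota_L}^{-j})^{\ast}$, and applying $\Pi_L^j$ produces $\frac{1}{2} c_L^j$, as desired. The main obstacle will be carefully matching the normalization conventions of $\Twist_{1, 1-j}^\Xi$ (defined abstractly on determinant modules via the cyclotomic character on $\Lambda$) with the specific choice of roots of unity $\alpha_{N'p^n}$ used to define $c_L^j$, and checking that this matching is consistent with Kato's explicit reciprocity law from \cite[Chapter III, Theorem 1.2.6]{Kato91}, which is precisely what guarantees the survival of the factor $\frac{1}{2}$ coming from the definition of $z_{N'p^n}^{\cyc}$.
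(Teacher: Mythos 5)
Your proposal is correct and follows essentially the same route as the paper's proof: both invoke Corollary \ref{cor:k=Q} and then unwind $\Twist_{1,1-j}^{\Xi}$ factor by factor --- the cohomology factor via cup product with the fixed system of $p$-power roots of unity (the paper uses $\zeta_{p^n}=\iota_{K_\infty}^{-1}(\exp(2\pi\sqrt{-1}/p^n))=\iota_{K_\infty}^{-1}(\alpha_{N'p^n})^{N'}$ rather than $\alpha_{N'p^n}$ itself, which is precisely the normalization point you flag) and the $X$-factor via the shift \eqref{twist_X} --- before projecting to level $L$, applying $\Pi_L^j$ to strip the $X$-factor, and using Lemma \ref{lem:H^1torsion-free} to pass from the $\Q_p$-level identity to the integral one. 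The only cosmetic difference is that the paper carries out the twist once at the infinite level via \eqref{global_twist} and then specializes to $L$, whereas you twist at each finite level $K_n$ and corestrict, which amounts to the same computation.
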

\begin{proof}
For any $n \geq 1$, 
we put $\zeta_{p^n} := \iota_{K_\infty}^{-1} (\exp (\frac{2 \pi \sqrt{-1}}{p^n})) \in K_\infty$. 
This determines a $\Z_p$-basis $\xi := (\zeta_{p^n})_n \in \Z_p (1)$. 
Then, the twist map $X_{K_\infty} (0) \xrightarrow{\sim} X_{K_\infty} (-j)$ defined in \eqref{twist_X}
in \S \ref{ss:twist_def} sends the $\Lambda$-basis 
$(e_{\iota_{K_n}}^0)_n \in X_{K_\infty} (0)$ to $(e_{\iota_{K_n}}^{-j})_n \in X_{K_\infty} (-j)$. 
In addition, the twist map between Iwasawa cohomology groups
\[
 \varprojlim_{n} H^1 (G_{K_n , S} , \Z_p (1)) \xrightarrow{\sim} 
\varprojlim_{n} H^1 (G_{K_n , S} , \Z_p (1-j))
\]
which is induced by \eqref{global_twist} sends the element 
$(\frac{1}{2} \cdot \iota_{K_n}^{-1} (c_{N'p^n}) )_n$ to 
$(\frac{1}{2} \cdot c_{K_n}^j )_n$. 
Therefore, we know that the following composite map 
\begin{align*}
&
\varepsilon_0 \parenth{ \varprojlim_n \Q_p \otimes_{\Z_p} \Xi_{K_{n} / \Q, S} (0) }
\overset{ \Twist_{1, 1-j}^{\Xi}}{\xrightarrow{\sim}} 
\varepsilon_j\parenth{ \varprojlim_n \Q_p \otimes_{\Z_p} \Xi_{K_{n} / \Q, S} (j) } 
\twoheadrightarrow 
\varepsilon_j (\Q_p \otimes_{\Z_p} \Xi_{L/ \Q, S} (j) ) 
\\
&= 
 \varepsilon_j H^1 (G_{L, S} , \Q_p (1-j)) \otimes_{\Z_p[\Gal (L/\Q)]} X_L (-j)^{-1}
\xrightarrow{\sim}
\varepsilon_jH^1 (G_{L, S} , \Q_p (1-j))
\end{align*}
sends the element $\varepsilon_0 Z_{N'p^\infty}^{\cyc}$ to 
$\frac{1}{2} c_L^j$ for any positive integer $j$, 
where the last isomorphism in the above composite map is given by 
$x \otimes e_{\iota_{L}}^{-j, \ast} \mapsto x$. 
Therefore, by Definition \ref{def:eta} and Corollary \ref{cor:k=Q}, 
we know that $\eta_{L, S}^j = \frac{1}{2}c_{L}^j$ in $\varepsilon_jH^1 (G_{L, S} , \Q_p (1-j))$. 
Since $\varepsilon_jH^1 (G_{L, S} , \Z_p (1-j))$ is $\Z_p$-torsion free by Lemma \ref{lem:H^1torsion-free}, 
we obtain the claim. 
\end{proof}

%%%%%%%%%%%%%%%%%%%%%%%%%%%%%%%%%%%%%%%%%%%%%%%%%%%%%%%%%
\appendix
%%%%%%%%%%%%%%%%%%%%%%%%%%%%%%%%%%%%%%%%%%%%%%%%%%%%%%%%%

%%%%%%%%%%%%%%%%%%%%%%%%%%%%%%%%%%%%%%%%%%%%%%%%%%%%%%%%%%%
\section{Twist map}\label{App:twist}
%%%%%%%%%%%%%%%%%%%%%%%%%%%%%%%%%%%%%%%%%%%%%%%%%%%%%%%%%%%

In this section, we review a twist map between the Galois cohomology complexes. 
%This kind of material should be known to experts, but we have not found comprehensive references. 

\subsection{Definition of the twists}\label{ss:twist_def}
We fix an odd prime $p$. 
Let $K/k$ be a finite abelian extension of number fields and 
$S$ a finite set of places of $k$ such that $S$ contains $S_p \cup S_\infty \cup S_{\ram} (K/k)$. 
For any $n \in \Z_{\geq 1}$, put $K_n := K(\mu_{p^n})$ and $K_\infty := \cup_{n \geq 1} K_n$. 
We consider the Iwasawa algebra $\Lambda = \Z_p[[\Gal(K_{\infty}/k)]]$. 
For any integer $m$, we write 
\[
\twist_m : \Lambda \xrightarrow{\sim} \Lambda ; 
\quad
\sigma \mapsto \chi_{\cyc }^m (\sigma)\sigma
\]
 for the ring automorphism induced by the 
$m$-th power of the cyclotomic character. 

Let $\xi = (\zeta_{p^n})_n$ be a basis of $\Z_p(1) = \varprojlim_n \mu_{p^n}(K_{\infty})$. 
We consider the Iwasawa cohomology complex $\RG_{\Iw} (G_{K_\infty, S} , \Z_p (j))$
which is defined by the projective limit of $\RG (G_{K_n, S} , \Z_p (j))$ for any $j \in \Z$. 
Then, for any integers $j$ and $j'$, we define the following $\twist_{j-j'}$-semilinear isomorphism
\begin{align}\label{global_twist}
&\RG_{\Iw} (G_{K_\infty, S} , \Z_p (j)) \simeq
\varprojlim_n \RG (G_{K_n, S} , \Z/ p^n  (j)) 
\\
&\overset{(\cup \zeta_{p^n}^{\otimes j'-j})_n}{\xrightarrow{\sim}} 
\varprojlim_n \RG (G_{K_n, S} , \Z / p^n (j') )
\simeq 
\RG_{\Iw} (G_{K_\infty, S} , \Z_p (j')), 
\end{align}
where the first and the last isomorphisms are natural ones and 
the second isomorphism is induced by the cup product with $\zeta_{p^n}^{\otimes j'-j} \in \Z / p^n (j'-j)$. 
By \cite[Lemma 7.2]{ADK}, this induces a $\twist_{j-j'}$-semilinear isomorphism between the determinant moules
\[
\Twist_{j, j'}^{\xi} : 
\Det_{\Lambda}^{-1}
(\RG_{\Iw} (G_{K_\infty, S} , \Z_p (j)))
\xrightarrow{\sim} 
\Det_{\Lambda}^{-1} (\RG_{\Iw} (G_{K_\infty, S} , \Z_p (j'))). 
\]
Clearly, this depends on the choice of $\xi \in \Z_p(1) = \varprojlim_n \mu_{p^n}(K_{\infty})$. 

Similarly, we define a twist map for the local Galois cohomology complexes. 
Let $v$ be a finite prime of $k$. 
For any $j \in \Z$, we consider a semi-local Iwasawa cohomology complex $\RG_{\Iw} (K_{\infty, v} , \Z_p (j) )$ 
which is defined by the projective limit of $\bigoplus_{w_n \mid v }\RG (K_{n, w_n} , \Z_p (j))$, 
where $w_n$ runs over the primes of $K_n$ above $v$. 
In addition, for any $j, j' \in \Z$, we define the following $\twist_{j-j'}$-semilinear isomorphism
\begin{align}\label{local_twist}
&
\RG_{\Iw} (K_{\infty, v} , \Z_p (j) )
\simeq
\varprojlim_n \bigoplus_{w_n \mid v }\RG (K_{n, w_n} , \Z/ p^n  (j)) 
\\
&
\overset{( \oplus_{w_n \mid v} \cup \zeta_{p^n}^{\otimes j'-j})_n}{\xrightarrow{\sim}} 
\varprojlim_n \bigoplus_{w_n \mid v }\RG (K_{n, w_n} , \Z/ p^n  (j')) 
\simeq
\RG_{\Iw} (K_{\infty, v} , \Z_p (j') ), 
\end{align}
where we can regard $\zeta_{p^n} \in K_{n, w_n}$ by the natural inclusion $K_n \hookrightarrow K_{n, w_n}$. 
This also induces a $\twist_{j-j'}$-semilinear isomorphism between the determinant modules
\[
\Twist_{v, j, j'}^{\xi} : 
\Det_{\Lambda}^{-1}
(\RG_{\Iw} (K_{\infty, v} , \Z_p (j) ) )
\xrightarrow{\sim} 
\Det_{\Lambda}^{-1} (\RG_{\Iw} (K_{\infty, v} , \Z_p (j') )). 
\]

Set $S_f := S \setminus S_\infty$. 
For any $j \in \Z$, we define a complex $\Delta_{K_\infty, S} (j)$ by 
\[
\Delta_{K_\infty, S} (j) := \Cone \Big{(} \RG_{\Iw} (G_{K_\infty, S} , \Z_p (j)) \to \bigoplus_{v \in S_f} \RG_{\Iw} (K_{\infty, v} , \Z_p (j) \Big{)} [-1]. 
\]
Then, for any integers $j$ and $j'$, we obtain a $\twist_{j-j'}$-semilinear isomorphism 
$
\Delta_{K_\infty, S} (j) \xrightarrow{\sim} \Delta_{K_\infty, S} (j')
$
such that the following diagram is commutative:
\begin{equation}\label{triangle_twist}
\xymatrix{
\Delta_{K_\infty , S} (j) 
\ar[r]
\ar[d]_-{\sim}
&
\RG_{\Iw} (G_{K_\infty, S} , \Z_p (j))
\ar[r]
\ar[d]^-{\eqref{global_twist}}_-{\sim}
&
\bigoplus_{v \in S_f} \RG_{\Iw} (K_{\infty, v} , \Z_p (j) )
\ar[d]^-{\eqref{local_twist}}_-{\sim}
\\
\Delta_{K_\infty , S} (j') 
\ar[r]
&
\RG_{\Iw} (G_{K_\infty, S} , \Z_p (j'))
\ar[r]
&
\bigoplus_{v \in S_f} \RG_{\Iw} (K_{\infty, v} , \Z_p (j') ) 
}
\end{equation}
and this induces a $\twist_{j-j'}$-semilinear isomorphism 
\[
\Twist_{j, j'}^{\Delta, \xi} : 
\Det_{\Lambda}^{-1}
(\Delta_{K_\infty , S} (j))
\xrightarrow{\sim} 
\Det_{\Lambda}^{-1} (\Delta_{K_\infty , S} (j') 
). 
\]

For any $j \in \Z$ and $n \in \Z_{\geq 0}$, 
we set $X_{K_n} (j) := \bigoplus_{\iota : K_n \hookrightarrow \C}\Z_p (j)$, 
where $\iota$ runs over all embeddings. 
For any $j \in \Z$, we define a $\Lambda$-module $X_{K_\infty} (j)$ by
\[
X_{K_\infty} (j) := \varprojlim_{n} X_{K_n} (j) 
\]
and consider an isomorphism
\begin{equation}\label{twist_X}
 X_{K_\infty} (j) \simeq 
\varprojlim_n \parenth{\bigoplus_{\iota : K_n \hookrightarrow \C} \Z / p^n \Z (j)}
\xrightarrow{\sim} 
\varprojlim_n \parenth{\bigoplus_{\iota : K_n \hookrightarrow \C} \Z / p^n \Z (j')} \simeq 
X_{K_\infty} (j'),
\end{equation}
which is defined by
\[
((a_{n, \iota})_{\iota : K_n \hookrightarrow \C} )_n \mapsto
 ((a_{n, \iota} \otimes \iota (\zeta_{p^n})^{\otimes j'-j})_{\iota : K_n \hookrightarrow \C} )_n.
 \]
Then, this is also $\twist_{j-j'}$-semilinear isomorphism (see \cite[\S 7.2]{ADK}). 
Therefore, this induces a $\twist_{j-j'}$-semilinear isomorphism between determinant modules 
\begin{equation}\label{shift X}
\Det_{\Lambda}^{-1} (X_{K_\infty} (j))
\xrightarrow{\sim}
\Det_{\Lambda}^{-1} (X_{K_\infty} (j')). 
\end{equation}

For any $j \in \Z$, we put 
\[
\Xi_{K_\infty / k, S}^{\loc} (j) := \bigotimes_{v \in S_f} 
  \Det_{\Lambda}^{-1} (\RG_{\Iw} (K_{\infty, v} , \Z_p (j) )
) \otimes_{\Lambda} 
  \Det_{\Lambda}^{-1} (X_{K_\infty} (j)). 
\]
Then, for any integers $j$ and $j'$, 
we define a $\twist_{j-j'}$-semilinear isomorphism 
\begin{equation}\label{loc_Twist}
\Twist_{j, j'}^{\loc} :
\Xi_{K_\infty / k, S}^{\loc} (j) 
\xrightarrow{\sim}
\Xi_{K_\infty / k, S}^{\loc} (j')
\end{equation}
as the tensor product of \eqref{shift X} and $\Twist_{v, j, j'}^{\xi}$ for each finite prime $v$ of $k$. 
Then, $\Twist_{j, j'}^{\loc}$ is independent of the choice of $\xi$ (see \cite[\S 7.2]{ADK}).

%%%%%%%%%%%%%%%%%%%%%%%%%%%%%%%%%%%%%%%%%%%%%%%%%%%%%%%%%%%
\subsection{CM-extension case}\label{ss:twist_CM}
%%%%%%%%%%%%%%%%%%%%%%%%%%%%%%%%%%%%%%%%%%%%%%%%%%%%%%%%%%%

Let $K/k$ be a finite abelian CM extension, and we use the same notation as in \S \ref{ss:main_conj}. 
For any integer $j$, we consider an idempotent 
\[
\varepsilon_j := \frac{1+(-1)^j c}{2} \in \Lambda, 
\]
where $c \in \Gal (K_\infty /k)$ is the complex conjugation. 
Then, by Lemma \ref{lem:complex} (1), (2), we know that the grade of 
$\varepsilon_{1-j} \Det_{\Lambda}^{-1} (\Delta_{K_\infty , S} (j))$ is zero for any $ j \in \Z_{\leq 0}$.  
Thus, the $\varepsilon_{1-j}$-component of the isomorphism $\Twist_{j, j'}^{\Delta, \xi}$ 
does not depend on the choice of $\xi$ for any non-positive integers $j$ and $ j'$ (this can be checked as \cite[Lemma 7.3]{ADK}). 
Therefore, for simplicity, we write 
\begin{equation}\label{twist_Delta}
\Twist_{j, j'}^{\Delta} : 
\varepsilon_{1-j} \Det_{\Lambda}^{-1}
(\Delta_{K_\infty , S} (j))
\xrightarrow{\sim} 
\varepsilon_{1-j} \Det_{\Lambda}^{-1} (\Delta_{K_\infty , S} (j') 
)
\end{equation}
for this twist map. 

For any $j \in \Z_{\geq 1}$, put $X_{K_\infty} (-j)^+ := \varepsilon_jX_{K_\infty} (-j)$
and define 
\[
\Xi_{K_\infty, k , S} (j) := \Det_{\Lambda}^{-1}(\RG_{\Iw} (G_{K_\infty, S} , \Z_p (1-j))) 
\otimes_{\Lambda} 
 \Det_{\Lambda}^{-1} (X_{K_\infty} (-j)^+). 
\]
Then, by Lemma \ref{lem:complex} (3), 
we know that the Euler characteristic of $\varepsilon_j \RG_{\Iw} (G_{K_\infty, S} , \Z_p (1-j))$ as 
a perfect complex of $\varepsilon_j\Lambda$-modules is $-[k:\Q]$ and 
the $\varepsilon_j \Lambda$-module $\varepsilon_jX_{K_\infty} (-j)^+$ is free of rank $[k :\Q]$. 
Thus, the grade of $\Xi_{K_\infty, k , S} (j)$ is zero. 
Therefore, for any positive integers $j$ and $j'$, 
the following $\twist_{j'-j}$-semilinear isomorphism
\[
\Twist_{1-j, 1-j'}^{\Xi}%:= \Twist_{1-j, 1-j'}^{\xi} \otimes  \eqref{shift X}
: \varepsilon_j\Xi_{K_\infty, k , S} (j) 
\xrightarrow{\sim}
\varepsilon_j\Xi_{K_\infty, k , S} (j')
\]
defined by the tensor product of $\Twist_{1-j, 1-j'}^{\xi}$ with  \eqref{shift X}
is also independent of the choice of $\xi$.

{
\bibliographystyle{abbrv}
\bibliography{biblio}
}

\end{document}